\DeclarePairedDelimiter{\floor}{\lfloor}{\rfloor}
\colorlet{bscolor}{blue}
\colorlet{skcolor}{orange}
\newcommand{\Omit}[1]{}
\newcommand{\cf}{CFON}
\newcommand{\chicf}{\chi_{ON}(G)}
\newcommand{\chion}{\chi_{ON}(G)}
\newcommand{\chicfp}{\chi^*_{ON}(G)}
\newcommand{\partialcf}{CFON*}
\newcommand{\cfcn}{CFCN}
\newcommand{\chicn}{\chi_{CN}(G)}
\newtcolorbox{mybox}[2][]{colbacktitle=white,colback=white,coltitle=black,title={#2},fonttitle=\bfseries,#1, left = 2mm, right = 2mm, breakable}
\begin{document}
\title{Combinatorial Bounds for Conflict-free Coloring on Open Neighborhoods}
\author{
Sriram Bhyravarapu
\and
Subrahmanyam Kalyanasundaram
}
\institute{Department of Computer Science and Engineering, IIT Hyderabad \\
\email{\{cs16resch11001, subruk\}@iith.ac.in}
}

\maketitle

\begin{abstract}
In an undirected graph $G$, a conflict-free coloring 
with respect to open neighborhoods (denoted by \cf{} coloring)
is an assignment of colors to the vertices such that 
every vertex has a uniquely colored vertex in its open 
neighborhood.  
The minimum number of colors required for a \cf{} coloring of $G$ 
is the \cf{} chromatic number of $G$, denoted by $\chicf$.

The decision problem that asks whether $\chicf \leq k$ is NP-complete.
Structural as well as algorithmic aspects of this problem have been 
well studied. 
We 
obtain
the following results for $\chicf$:
\begin{itemize}
\item Bodlaender, Kolay and Pieterse [WADS 2019] showed the upper bound
$\chicf \leq {\sf fvs}(G) + 3$, where 
${\sf fvs}(G)$ denotes the size of a minimum feedback vertex set of $G$. 
We show the improved bound of 
$\chicf \leq {\sf fvs}(G) + 2$, which is tight, thereby answering an 
open question in the above paper.

    \item We study the relation between $\chicf$ and the pathwidth of the 
    graph $G$, denoted ${\sf pw}(G)$. The above paper from 
    WADS 2019 showed the upper bound
$\chicf \leq 2{\sf tw}(G) + 1$ where ${\sf tw}(G)$ stands for the treewidth of $G$. This implies
an upper bound of $\chicf \leq 2{\sf pw}(G) + 1$.
We show an improved bound of $\chicf \leq \floor{\frac{5}{3}({\sf pw}(G) + 1)}$. 

    \item We prove new bounds for $\chicf$ with respect 
    to the structural parameters neighborhood diversity 
    and distance to cluster, improving the existing results of Gargano and Rescigno [Theor. Comput. Sci. 2015] and Reddy 
    [Theor. Comput. Sci. 2018], respectively. Furthermore, 
    our techniques also yield improved bounds 
    for the closed neighborhood variant of the problem.
    
    \item We also study the partial coloring variant of the \cf{} coloring problem, which allows vertices to
    be left uncolored. Let $\chicfp$ denote the minimum number
    of colors required to color $G$ as per this 
     variant. Abel et. al. [SIDMA 2018]
    showed that $\chicfp \leq 8$ when $G$ is planar. They asked if fewer colors
    would suffice for planar graphs. We answer this question  by showing that $\chicfp \leq 5$ for all planar $G$. This approach also yields the bound $\chicfp \leq 4$ for all outerplanar $G$.
\end{itemize}
All our bounds are a result of constructive algorithmic procedures.
\end{abstract}

\section{Introduction}
A \emph{proper coloring} of a graph is an assignment of a color to every vertex of the graph such that adjacent vertices receive
 distinct colors.
Conflict-free coloring is a variant of the graph coloring problem. 
A conflict-free coloring of a graph $G$ 
is a coloring such that for every vertex in $G$, there exists a uniquely colored vertex in its neighborhood. 
This problem was first introduced in 2002 by Even, Lotker, Ron and Smorodinsky \cite{Even2002}. 
This problem was originally motivated by wireless communication systems, 
where the base stations and clients
have to communicate with each other.
Each base station is assigned a frequency and if two base stations with 
the same frequency communicate with the same client, it leads to interference. 
So for each client, it is ideal to 
have a base station with a unique frequency. 
Since each frequency band is expensive, there is a need 
to minimize the number of frequencies used 
by the base stations.

Over the past two decades, this problem has been very well studied, see for instance
the survey by Smorodinsky \cite{smorosurvey}.
The conflict-free coloring problem has been studied with respect to the open  neighborhood and the closed neighborhood.
In this paper, we focus on the 
open neighborhood variant of the problem. 

\begin{definition}[Conflict-Free Coloring]\label{def:cf}
A \emph{\cf{} coloring} of a graph $G = (V,E)$ using $k$ colors is an 
assignment $C:V(G) \rightarrow \{1, 2, \ldots, k\}$ such that for every $v \in V(G)$, 
there exists an $i \in \{ 1, 2, \ldots, k\}$ such that  $|N(v) \cap C^{-1}(i)| = 1$.
The smallest number of colors required for a \cf{} coloring
of $G$ is called the \emph{\cf{} chromatic number of $G$}, denoted by $\chicf$.

The closed neighborhood variant of the problem, \emph{\cfcn{} coloring}, is obtained by replacing the open neighborhood $N(v)$ by the closed
neighborhood $N[v]$ in the above. The corresponding chromatic number is
denoted by $\chicn$.
\end{definition}
The \cf{} coloring problem and many of its variants are known to be 
NP-complete~\cite{planar,gargano2015}. It was further shown in \cite{gargano2015} that the \cf{} coloring problem is hard 
to approximate within a factor of $n^{1/2 - \varepsilon}$,
unless P = NP. 
Since the problem is NP-hard, the parameterized aspects of the problem 
have been studied. 
The problems are fixed parameter tractable when 
parameterized by vertex cover number, neighborhood 
diversity~\cite{gargano2015}, distance to cluster
\cite{vinod2017}, and more recently, treewidth~\cite{Boe2019,Sak2019}. 
This problem has attracted special interest for 
graphs arising out of intersection of 
geometric objects, see for instance,
\cite{Keller2018,sandor2017,chen2005}. 

The \cf{} coloring problem
is considered as the harder of the open and closed neighborhood variants, see
for instance, remarks in \cite{Keller2018,Pach2009}. It is easy to construct example
graphs $G$, for which $\chicn = 2$ and $\chi_{ON}(G) = \Theta(\sqrt{n})$.
Pach and Tardos \cite{Pach2009} showed that for any graph $G$ on 
$n$ vertices,
the closed neighborhood chromatic number $\chicn = O(\log^2 n)$.
The corresponding best bound \cite{Pach2009,pcheilaris2009} for open neighborhood is 
$\chion = O(\sqrt n)$.


Another variant that has been studied \cite{planar} is the partial coloring variant:
\begin{definition}[Partial Conflict-Free Coloring]
A \emph{partial conflict-free coloring on open neighborhood}, denoted by \partialcf{}, of a graph $G= (V,E)$ using $k$ colors is an 
assignment $C:V(G) \rightarrow \{1, 2, \ldots, k, \mbox{unassigned}\}$ such that for every $v \in V(G)$, 
there exists an $i \in \{1, 2, \ldots, k\}$ such that  $|N(v) \cap C^{-1}(i)| = 1$.

The corresponding \partialcf{} chromatic number is denoted $\chicfp$.
\end{definition}
The key difference between \partialcf{} coloring and \cf{} coloring is that in the partial variant, we allow some
vertices to be not assigned a color. 
If a graph can be \partialcf{} colored using $k$ colors, then all the uncolored vertices can be assigned 
the color $k+1$, and thus is a 
\cf{} coloring using $k+1$ colors.

\subsection{Our Results and Discussion}
In this paper, we obtain improved bounds for $\chicf$ under different
settings. More importantly, all our bounds are a result of
constructive algorithmic procedures and hence can easily
be converted into respective algorithms. 
We summarize our results below:

\vspace{-0.1in}
\begin{enumerate}
        \item In Section \ref{sec:pw}, we show that $\chicf \leq \floor{\frac{5}{3}({\sf pw} (G)+1)}$ where ${\sf pw}(G)$ denotes the pathwidth of $G$. The previously best known bound in terms of ${\sf pw}(G)$ was
    $\chicf \leq 2{\sf pw}(G) + 1$, implied by the results in \cite{Boe2019}.
    
    To the best of our knowledge, this is the first upper bound for $\chion$
    in terms of pathwidth, which does not follow from treewidth. Our 
    bound follows from an algorithmic procedure and uses an intricate analysis.
    We are unable to 
     generalize our bound in terms of treewidth because we 
    crucially use a fact (stated as Theorem \ref{thm:pwneighbor}) that applies to path decomposition,
    but does not seem to apply to tree decomposition. 
    It will be of interest to see if this hurdle can be overcome to obtain
    an equivalent bound in terms of treewidth.
    
    There are graphs $G$ for which $\chion = {\sf tw}(G) + 1 = {\sf pw}(G)$. 
    It would be interesting to close the gaps between the respective 
    upper and lower bounds. 
 
 
    \item In Section \ref{sec:fvs}, we show that $\chicf \leq {\sf fvs}(G) + 2$, where ${\sf fvs}(G)$ 
    denotes the size of a minimum feedback vertex set of $G$.
    This bound is tight and is an improvement over the bound $\chicf \leq {\sf fvs}(G) + 3$ by Bodlaender, Kolay and Pieterse \cite{Boe2019}. 
    
    \item In Section \ref{sec:nd}, we give improved bounds with respect to neighborhood diversity parameter. 
    Gargano and Rescigno \cite{gargano2015} showed that 
    $\chion \leq \chi_{ON}(H) + cl(G) + 1$ and  $\chicn \leq \chi_{CN}(H) + ind(G) + 1$. Here $H$ is the type graph of $G$, 
    while $cl(G)$ and $ind(G)$ denote the number of cliques and independent
    sets respectively in the type partition of $G$. We present 
    the improvements $\chion \leq \chi_{ON}(H) + cl(G)/2 + 2$ and  $\chicn \leq \chi_{CN}(H) + ind(G)/3 + 3$. 
    
    \item In Section \ref{sec:dc}, we show that $\chion \leq {\sf dc}(G)+3$, where ${\sf dc}(G)$ is the distance to cluster parameter of $G$.
    This
    is an improvement over the previous bound \cite{vinod2017} of $2{\sf dc}(G)+3$.
    Our bound is nearly tight since there are graphs for which $\chicf ={\sf dc}(G)$.
    Using a similar approach, we obtain the improved bound
    $\chicn \leq \max\{3, {\sf dc}(G)+1\}$.
    
    For the results in terms of parameters neighborhood diversity and distance to cluster,  the obvious open questions are to improve the bounds and/or to provide tight examples. 
    \item When $G$ is planar, we show that $\chicfp \leq 5$. This improves
    the previous best known bound by Abel et al. \cite{planar} of $\chicfp \leq 8$. 
    The same approach helps us show that $\chicfp \leq 4$, when $G$
    is an outerplanar graph. 
    These two results are
    discussed in Section \ref{sec:planar}.

    There are planar graphs $G$ for which $\chi^*_{ON}(G) = 4$, which shows that our bound is nearly tight and leaves a gap of 1
between the upper and lower bounds. It will be of interest to close this gap. 
    \item For outerplanar graphs $G$, the bound $\chicfp \leq 4$  
    implies a bound of $\chicf \leq 5$.
    We show a better bound of $\chicf \leq 4$.
\end{enumerate}

\section{Preliminaries}\label{sec:prelim}
In this paper, we consider only simple, finite, undirected and connected graphs. If the graph is not 
connected, we color each of the components independently. 
Also, we assume that the graphs do not have isolated vertices as they cannot be \cf{} colored. 
The graph induced by a set of vertices $V'$ in $G$ is denoted $G[V']$. 
For any two vertices $u,v \in V(G)$, the shortest distance between them 
is denoted  $dist(u,v)$. 
The open neighborhood of $v$, denoted $N(v)$, is the set of vertices adjacent to $v$. The closed neighborhood of $v$, denoted $N[v]$, is
$N[v] = N(v) \cup \{v\}$.
The degree of a vertex  $v$ in the graph is denoted $\mbox{deg}(v)$.
The distance, degree and neighborhood restricted to a subgraph $H$ is denoted $dist_H(u,v)$, $\deg_H(v)$ and $N_H(v)$ respectively.

We denote the set $\{1, 2, \dots , q\}$ by $[q]$. 
Throughout this paper, we use the coloring functions $C:V\rightarrow [q]$ and $U:V\rightarrow [q]$ to denote the color assigned to a vertex and a unique color in its neighborhood, respectively. 
For a vertex $v \in V(G)$, if there exists a vertex $w\in N(v)$ such that 
$\{ x \in N(v) \setminus \{w\}$: $C(x) = C(w) \} = \emptyset$, then $w$ is called
a uniquely colored neighbor of $v$.

For theorems marked $\star$, we provide the full proofs in the appendix due to space
constraints.


\section{Pathwidth}\label{sec:pw}

\begin{theorem}[Main Pathwidth Result]\label{thm:mainpw}
Let $G$ be a graph and let ${\sf pw}(G)$ denote the pathwidth of $G$.
Then there exists a \cf{} coloring of $G$ using at most $\floor{\frac{5}{3}({\sf pw}(G) + 1)}$
colors.
\end{theorem}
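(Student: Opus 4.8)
The plan is to process a path decomposition of $G$ from left to right, coloring vertices as they are "introduced" so that the conflict-free condition is met using a palette of size roughly $\frac{5}{3}({\sf pw}(G)+1)$. Let $w = {\sf pw}(G)+1$ be the maximum bag size. The natural first move is to invoke Theorem~\ref{thm:pwneighbor} (the fact stated earlier that is special to path decompositions): I expect this says something like, for a nice path decomposition, there is an ordering of the vertices such that each vertex $v$ has all of $N(v)$ within a "window" of $O(w)$ consecutive vertices — equivalently, each vertex sees only boundedly many distinct bags, and more importantly, the "forget" structure is linear. I would use this to partition $V(G)$ into blocks $B_1, B_2, \ldots$ corresponding to consecutive stretches of the decomposition, each block of size $\Theta(w)$, so that the neighborhood of any vertex in $B_i$ lies in $B_{i-1}\cup B_i \cup B_{i+1}$.

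**Next I would** set up the coloring in three "tracks" of colors, which is where the factor $\frac{5}{3}$ should come from: partition the color palette into roughly three groups, and assign the blocks $B_i$ to these groups cyclically ($B_i$ uses group $i \bmod 3$), so that two blocks using the same color group are at distance $\geq 3$ and hence non-adjacent. Within the colors available to $B_i$ and its two neighbors, I need to guarantee each vertex $v \in B_i$ a uniquely colored neighbor. The heart of the argument is an amortized/counting scheme: inside each block I reserve a small number of "private" colors to serve as unique witnesses for vertices whose witness cannot be found among the generic colors, and I argue that $\lfloor \frac{5}{3} w \rfloor$ colors total — say $w$ "ordinary" colors shared across non-adjacent blocks plus $\lfloor \frac{2}{3} w\rfloor$ extra colors cycled among three consecutive block-groups — suffice. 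The precise bookkeeping (which vertex gets which witness, handling the boundary bags where a vertex straddles two blocks) is the intricate part the abstract warns about.

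**The main obstacle** will be handling vertices near block boundaries and vertices of small degree: a vertex $v$ whose neighborhood is split between $B_{i}$ and $B_{i+1}$ might not have a clean "home block," and a low-degree vertex (degree $1$ or $2$) forces its neighbor(s) to carry a color that is unique in $N(v)$, which can propagate constraints. I would handle this by a careful case analysis based on $\deg(v)$ and on how $N(v)$ distributes across the window, possibly peeling off degree-$\leq 2$ vertices first (as in the feedback-vertex-set and outerplanar arguments elsewhere in the paper) and coloring them at the end. The floor function and the exact constant $\frac{5}{3}$ will emerge from optimizing the sizes of the "shared" versus "reserved" color classes against the worst-case block, so I would keep those sizes as parameters $a w + b$ until the counting forces $a = 5/3$.

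**One more ingredient I anticipate needing:** an explicit inductive invariant carried left-to-right — something like "after processing $B_1 \cup \cdots \cup B_i$, every vertex all of whose neighbors have been colored already has a uniquely colored neighbor, and the colors used on the current boundary bag are a known reserved subset" — so that when $B_{i+1}$ is processed we only have to fix up vertices in $B_i$ that were waiting on neighbors in $B_{i+1}$. Maintaining this invariant while respecting the cyclic color-group assignment, and checking it still holds at the final block, is the step I'd write out most carefully.
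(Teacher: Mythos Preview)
Your plan has a genuine gap: the block-partition-with-cyclic-colors scheme you describe is the kind of device that controls \emph{proper} coloring or interval conflicts, but it does not by itself produce a \emph{uniquely} colored neighbor for each vertex. Even if $B_{i-1},B_i,B_{i+1}$ use three disjoint color groups, a vertex $v\in B_i$ can easily have two neighbors in $B_{i+1}$ sharing the same color, two in $B_{i-1}$ sharing another, and so on; nothing in your accounting forces any color to appear exactly once in $N(v)$. The ``reserve $\lfloor\tfrac{2}{3}w\rfloor$ private colors'' line is where the real argument would have to live, and you have not said how those private colors are distributed so that each vertex gets a witness without collisions---and that is precisely the hard part.

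You also misread Theorem~\ref{thm:pwneighbor}. It is not a windowing/ordering statement; it says one can take a (semi-)nice path decomposition in which every introduced vertex already has a neighbor in its introducing bag (with a special two-vertex bag for the exceptional case). The paper exploits this very differently from your plan: it sweeps the bags left to right and, when $v$ is introduced, immediately assigns both $C(v)$ and a designated unique color $U(v)=C(y)$ for some neighbor $y$ already in the bag, then protects $U(v)$ forever after. The $\tfrac{5}{3}$ factor does not come from a three-phase cyclic palette; it comes from a counting lemma about ``expensive subsets'': a set $S$ of $k$ vertices in a bag whose $C$- and $U$-values are all distinct forces some bag to have size at least $3k/2$. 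Hence the number of ``extra'' $U$-colors floating in any bag is at most $\tfrac{2}{3}({\sf pw}(G)+1)$, and adding this to the at most ${\sf pw}(G)+1$ $C$-colors in the bag (plus one for the next introduction) gives the bound. Your outline would need to be replaced by this per-vertex $C/U$ assignment and the expensive-subset argument to go through.
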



The proof of this theorem will be a constructive procedure that assigns colors
to the vertices of $G$ from a set of size $5({\sf pw}(G)+1)/3$. We first formally define 
pathwidth. 

\begin{definition}[Path decomposition \cite{parabook}]
A \emph{path decomposition} of a graph $G$ is a sequence $\mathcal P = (X_1,
X_2, \ldots, X_s)$ of bags such that, for every $p \in \{1, 2, \ldots, s\}$, we have 
$X_p \subseteq V(G)$ and the following hold:
\begin{itemize}
    \item For each vertex $v \in V(G)$, there is a $p \in \{1, 2, \ldots, s\}$ such that $v \in X_p$.
    \item For each edge $\{u, v\} \in E(G)$, there is a $p \in \{1, 2, \ldots, s\}$ such that $u, v \in X_p$.
    \item If $v \in X_{p_1}$ and $v \in X_{p_2}$ for some $p_1 \leq p_2$, then $v \in X_p$ for all 
    $p_1 \leq p \leq p_2$.
\end{itemize}
\end{definition}
The \emph{width} of a path decomposition $(X_1,
X_2, \ldots, X_s)$ is $\max_{1 \leq p \leq s}\{|X_p| - 1\}$. The \emph{pathwidth} 
of a graph $G$, denoted ${\sf pw}(G)$, 
is the minimum width over all path decompositions of $G$. 
For the purposes of our algorithm, we need the path decomposition 
to satisfy certain additional properties too. 

\begin{definition}[Semi-Nice Path Decomposition]\label{def:seminice}
A path decomposition  $\mathcal P = (X_1,
X_2, \ldots, X_s)$ is called a \emph{semi-nice path decomposition} if $X_1 = X_s = \emptyset$ and for all $p \in \{2, \ldots, s\}$, exactly one of the following 
hold:
\vspace{-0.05in}
\begin{description}
    \item[SN1.] There is a vertex $v$ such that
     $v \notin X_{p-1}$ and $X_{p} = X_{p-1} \cup \{v\}$. In this case, we say 
     that $X_{p}$ \emph{introduces} $v$. Further, when $X_{p}$ introduces $v$,
     $N(v) \cap X_{p} \neq \emptyset$.
    \item[SN2.] There is a vertex $v$ such that $v \in X_{p-1}$ and 
    $X_{p} = X_{p-1} \backslash \{v\}$. In this case, we say $X_{p}$ \emph{forgets} $v$.
    \item[SN3.] There is a pair of vertices $v, \widehat v$ such that $v, \widehat v \notin X_{p-1}$ and $X_{p} = X_{p-1} \cup \{v, \widehat v\}$. We call such a bag $X_p$
    a \emph{special bag}
    that introduces $v$ and $\widehat v$. Further, in a special bag $X_{p}$ that 
    introduces $v$ and $\widehat v$, it must be true that $N(v) \cap X_{p} = \{\widehat v\}$ and
    $N(\widehat v) \cap X_{p} = \{v\}$.
\end{description}
\end{definition}

\vspace{-0.05in}
We first note that the every graph without isolated vertices has a semi-nice path
decomposition of width ${\sf pw}(G)$.
\begin{theorem}\label{thm:pwneighbor}
Let $G$ be a graph that has no isolated vertices. Then it has a semi-nice path decomposition 
of width ${\sf pw}(G)$.  
\end{theorem}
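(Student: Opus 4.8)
The plan is to start from an optimal path decomposition of $G$, normalize it to a standard \emph{nice} path decomposition of the same width ${\sf pw}(G)$ --- one with $X_1 = X_s = \emptyset$ in which every bag introduces or forgets a single vertex, which is a routine normalization (cf.\ \cite{parabook}) --- and then repeatedly repair the bags that violate Definition~\ref{def:seminice}. Call an introduce bag $X_p$ \emph{offending} if it introduces a vertex $v$ with $N(v)\cap X_p = \emptyset$; a nice path decomposition with no offending bag is already semi-nice, so the whole task is to eliminate offending bags while keeping the width and the ``every bag is of type SN1, SN2, or SN3'' structure. The key structural fact I would record first is that an offending bag forces every neighbour of $v$ to be introduced strictly later: if some $u\in N(v)$ were introduced at or before $X_p$, then, as $\{u,v\}\in E(G)$, some bag contains both $u$ and $v$, that bag has index at least $p$, and by the interval axiom $u$ would already lie in $X_p$. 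Since $v$ is not isolated it has a first-introduced neighbour $u$, say with introduce bag $X_q$, $q>p$; the same reasoning shows $v\in X_{q-1}$ and $N(v)\cap X_{q-1}=\emptyset$.

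The repair acts on one such ``server'' vertex $u$ together with the whole cluster it serves. Given an offending bag introducing $v$, let $u$ be its first-introduced neighbour with introduce bag $X_q$, and let $V_u$ be the set of \emph{all} vertices whose introduce bag is offending and whose first-introduced neighbour is $u$ (so $v\in V_u$). One checks that $V_u$ is independent, that $V_u\subseteq X_{q-1}$, and that $N(u)\cap X_{q-1}=V_u\cup W_u$ where $W_u$ consists of neighbours of $u$ none of which is offending-with-first-neighbour-$u$. I would then (i) delete each $v'\in V_u$ from every bag of index less than $q$ that contains it (discarding the duplicate bag this creates at $v'$'s old introduce position); this preserves a valid path decomposition of the same width because no vertex of $V_u$ has a neighbour in any such bag. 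The transition into $X_q$ now introduces all of $V_u\cup\{u\}$ at once, and I would (ii) expand it into a chain of single-vertex introductions: introduce $u$ first, then the vertices of $V_u$ one by one. Every bag of this chain is a subset of $X_q$, so the width remains ${\sf pw}(G)$; each $v'\in V_u$ is introduced with its neighbour $u$ present, giving SN1 bags; and $u$ is introduced with a neighbour present whenever $W_u\neq\emptyset$. When $W_u=\emptyset$, instead I would make the first bag of the chain a \emph{special} bag (SN3) that introduces $u$ together with one vertex $v_1\in V_u$: this is legitimate since in that bag the only neighbour of $u$ is $v_1$ and the only neighbour of $v_1$ is $u$. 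Each repair preserves the SN1/SN2/SN3 form, so the step iterates.

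For termination, the repair eliminates at least one offending bag (every bag that previously introduced a member of $V_u$) and creates no new one: the only vertex whose introduce bag could be left without a neighbour is $u$ in the case $W_u=\emptyset$, which is exactly the case routed through a special bag; any other vertex $z$ that could be disturbed would be a neighbour of some $v'\in V_u$ introduced before $X_q$, contradicting that $u$ is $v'$'s first-introduced neighbour. Hence the number of offending bags strictly decreases, the process halts, and the result is a semi-nice path decomposition of width ${\sf pw}(G)$. The main obstacle --- and the reason to process a whole cluster $V_u$ at once rather than a single offending vertex --- lies precisely here: one must verify that relocating $V_u$ always fits inside the old bag $X_q$ without exceeding ${\sf pw}(G)+1$, and that the vertices left behind to witness the introduction of $u$ (the set $W_u$, or the partner $v_1$) and of each $v'$ are never themselves moved by a later repair, so that earlier fixes are not undone.
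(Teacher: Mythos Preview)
Your proposal is correct and follows essentially the same approach as the paper: delay each offending vertex until its first-introduced neighbour arrives, and create a special (SN3) bag precisely when that neighbour would otherwise be introduced with no neighbour of its own. The paper processes one offending vertex at a time rather than batching the whole cluster $V_u$, which makes the invariant-preservation check slightly simpler and shows that your flagged obstacle about ``earlier fixes being undone'' is in fact not an issue; but the underlying mechanism and the termination-by-decreasing-violation-count argument are the same.
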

The proof of the above theorem
is deferred to Section \ref{sec:pwneighbor}, after the proof of the main
theorem of this section.

\noindent\textbf{Algorithm}
We start with a semi-nice path decomposition $\mathcal P = (X_1,
X_2, \ldots, X_q)$ of width ${\sf pw}(G)$. 
We process each bag in the order
$X_1, X_2, \ldots, X_q$.
As we encounter each bag, we assign to the vertices
in the bag a color $C: V(G) \rightarrow [5({\sf pw}(G)+1)/3]$. We will also identify a unique 
color (from its neighborhood) for each vertex $U: V(G) \rightarrow [5({\sf pw}(G)+1)/3]$. We color
the bags such that the below are satisfied:
\vspace{0.1in}
\begin{mybox}{}
\begin{description}
    \item[Invariant 1.] For any bag $X$, if $v, v' \in X$, then $C(v) \neq C(v')$.
    \item[Invariant 2.] Suppose we have processed bags $X_1$ to 
    $X_p$, where $p \geq 2$. At this point, the induced graph 
    $G[\cup_{1 \leq j \leq p} X_j]$ is \cf{} colored.
    
    \item[Invariant 3.] For every vertex $v$ that appears in the bags processed, $U(v)$
    is set as $C(w)$ for a neighbor $w$ of $v$.
    Once $U(v)$ is assigned, it is ensured that for all ``future'' 
    neighbors $v'$ of $v$, $C(v') \neq U(v)$, thereby ensuring that $U(v)$ is retained 
    as a unique color in $N(v)$.
\end{description}
\vspace{-0.15in}
\end{mybox}
\noindent \textbf{Definitions required for the algorithm:} For each bag $X$, we define the set of \emph{free colors}, as $F(X) = 
 \{U(x): x\in X\} \setminus \{C(x): x\in X\}$. 
That is, $F(X)$ is the set of colors that appear in $X$ as unique
colors of vertices in $X$, but not as colors of any vertex. Further, we partition $F(X)$ into
two sets $F_1(X)$ and $F_{> 1}(X)$. They are defined as 
 $F_1(X) = \{c \in F(X) : 
\left |\{x\in X: U(x) = c \}\right | = 1 \}$ and $F_{>1}(X) = \{c \in F(X) : 
|\{x\in X: U(x) = c \}| > 1 \}$. 
A vertex $v$ that appears in a bag $X$ is called a \emph{needy vertex} (or simply 
\emph{needy}) in $X$, if
$U(v) \in F(X)$. For a bag $X$, we say that a set $S \subseteq X$ is an 
\emph{expensive subset} 
if $|\cup_{w \in S} \{C(w), U(w)\}| = 2|S|.$

When going through the sequence of bags in the semi-nice path decomposition, the bags $X$ that forget 
a vertex only contain vertices that 
have already been assigned colors and  
hence no action needs to be taken. 
When we move from 
a bag $X'$ to the next bag $X$ that introduces 
either one vertex 
or two vertices, we need to handle the introduced vertices.
Let us first consider the bags that introduce one vertex, say $v$.
For all bags that introduce one vertex, we assign $C(v)$ and $U(v)$ as per the below rules.



\begin{mybox}{For bags that introduce one vertex $v$}
\noindent \textbf{Rule 1 for assignment of $C(v)$:} 
\vspace{-0.07in}
    \begin{itemize}
        \item If there exists a color $c \in F_1(X') \setminus 
         \{U(x): {x\in N(v) \cap X'}\}$,
        then we assign $C(v) = c$. If there are more than one such color $c$, choose a $c$ such
        that $|\{x : x\in X', C(U^{-1}(c)) = U(x)\}|$ is minimized. Note that for all $c \in F_1(X')$,
        there is a unique vertex $w \in X'$ such that $U(w) = c$, and hence $U^{-1}(c)$
        is well defined.
    
        \item If 
        $F_1(X') \setminus  \{U(x): x\in N(v) \cap X'\} = \emptyset$,
        we check if there exists a color $c \in F_{>1}(X') \setminus 
         \{U(x): x\in N(v) \cap X'\}$. 
        If so, we assign $C(v) = c$. If there are multiple such $c$, then we choose one arbitrarily.
        
        \item If $F_1(X') \cup F_{>1}(X') \setminus 
         \{U(x): x\in N(v) \cap X'\} = \emptyset$,
        then there are no free colors that can be assigned as $C(v)$. 
    We assign $C(v)$ to be a new color (a color not in $\cup_{x \in X'} \{C(x), U(x)\}$). 
    
    \end{itemize}
\noindent \textbf{Rule 2 for assignment of $U(v)$:} 
    We assign $U(v) = C(y)$, where $y \in X'$ is a neighbor of $v$. 
    Such a $y$ exists by Theorem \ref{thm:pwneighbor}.
    If $v$ has multiple neighbors, we follow the
    below priority order:
    \vspace{-0.07in}
    \begin{itemize}
        \item If $v$ has needy vertices in $X'$ as neighbors, we choose
        $y$ as a needy neighbor such that $|\{x : x \in X', U(y) = U(x) \}|$
        is minimized.
        \item If $v$ does not have needy vertices in $X'$ as neighbors, 
        then we choose $y \in X'$ arbitrarily from the set of neighbors of $v$.
    \end{itemize}
\vspace{-0.1in}
\end{mybox}

Now let us consider the case where the bag $X$ is a  special bag 
that introduces two vertices $v$ and $\widehat v$. We assign $C(v), C(\widehat v), U(v), U(\widehat v)$ as per the 
following:
\begin{mybox}{For special bags that introduce two vertices $v$ and $ \widehat v$}
\noindent \textbf{For assignment of $C(v)$ and $C( \widehat v)$}: We 
select one of $v$ and $\widehat v$ arbitrarily, say $v$, to be colored first. 
We use Rule 1 to assign $C(v)$ and then $C(\widehat v)$, in that order. One point to note is that
during the application of Rule 1 here, the part $\{U(x): {x\in N(v) \cap X'}\}$ will not
feature as neither $v$ nor $\widehat v$ have neighbors in $X'$.

\vspace{0.05in}
\noindent \textbf{For assignment of $U(v)$ and $U(\widehat v)$:} 
    Assign $U(v) = C( \widehat v)$ and $U(\widehat v) = C(v)$.
\end{mybox}

It can easily be checked that the above rules maintain the invariants 1, 2 and 3 stated earlier
and hence the algorithm results in a \cf{} coloring of $G$. What remains is to show that $5({\sf pw}(G)+1)/3$ colors are 
sufficient. We first prove a technical result.

\begin{theorem}[Technical Pathwidth Result]\label{thm:technical}
During the course of the algorithm, let $k$ be the size of the largest expensive
subset out of all the bags in the path decomposition. Then 
there must exist a bag of size at least $3k/2$.
\end{theorem}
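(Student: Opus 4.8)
The plan is to work with a bag $X$ containing an expensive subset $S$ with $|S| = k$, and to split $S$ into its \emph{needy} part $B = \{w \in S : U(w) \in F(X)\}$ and its \emph{non-needy} part $A = S \setminus B$, so that $|A| + |B| = k$. For a non-needy $w$ the colour $U(w)$ equals $C(x_w)$ for some $x_w \in X$; expensiveness gives $U(w) \neq C(w')$ for every $w' \in S$, so $x_w \notin S$, and since the colours $U(w)$ ($w \in S$) are pairwise distinct, distinct non-needy vertices give distinct $x_w$. Hence $|X| \geq |S| + |A| = k + |A|$, and if $|A| \geq k/2$ we are done. So from now on assume $|A| < k/2$, i.e. $|B| > k/2$.

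The next step is to locate a single bag in which the whole needy part $B$ survives and stays needy. Fix $w \in B$ and set $c_w = U(w)$. This colour is free in $X$, but when $w$ was introduced its neighbour $y_w$ in that bag satisfied $C(y_w) = c_w$ (using Theorem~\ref{thm:pwneighbor}), so $y_w$ has since been forgotten. Let $\ell_w$ be the last index before $X$ at which some vertex still carries colour $c_w$. By the interval property, $w$ lies in every bag from its introduction up to $X$, so in each of $X_{\ell_w+1}, \dots, X$ the colour $c_w$ is free and $w$ is needy. Put $L = \max_{w \in B}\ell_w$ and $Y = X_{L+1}$; then all $|B|$ pairwise-distinct colours $c_w$ are free in $Y$, so $|F(Y)| \geq |B|$, and $B \subseteq Y$ with every vertex of $B$ needy in $Y$.

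It then suffices to prove a structural estimate of the form $|Y'| \geq 3\,|F(Y')|$ for every bag $Y'$ (equivalently, a bag in which $t$ vertices are needy with pairwise-distinct unique colours has at least $3t$ vertices). Granting this, $|Y| \geq 3|B| = 3(k-|A|)$, and combining with $|X| \geq k + |A|$ and optimising over $|A| \in [0, k/2)$ gives $\max\{|X|,|Y|\} \geq 3k/2$, which is the theorem. To establish the estimate I would fix $Y'$ and count, for each free colour $c$, its witnesses (vertices with that $U$-value), separating $F_1(Y')$ from $F_{>1}(Y')$; the naive witness count only yields $|F_1(Y')| + 2|F_{>1}(Y')| \leq |Y'|$, which does not by itself push $|F(Y')|$ below $|Y'|$, so the colouring rules must be invoked: Rule~1 consumes colours of $F_1$ before those of $F_{>1}$, and the tie-breaking minimisations in Rules~1 and~2 bound how many vertices can share a unique colour or be attached to a single $F_1$-witness, which I expect to pin down enough further vertices of $Y'$ per free colour to upgrade the naive bound all the way to $1/3$.

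The main obstacle is exactly this structural estimate: the rest is bookkeeping with the interval property, whereas the $1/3$ factor is where the announced "intricate analysis" lives and where the precise behaviour of Rules~1 and~2 must be used in full. A likely complication is that the clean bound $|Y'| \geq 3|F(Y')|$ may fail for some auxiliary bags, in which case one needs a weaker bag-dependent estimate that still combines with $|X| \geq k + |A|$ to give $3k/2$; handling that would require a charging argument spread over several bags rather than the single bag $Y$.
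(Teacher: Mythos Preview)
The gap you flag is genuine, and the bound $|Y'|\ge 3|F(Y')|$ is simply false. A semi-nice decomposition can start with a special bag $\{v,\widehat v\}$ (so $U(v)=C(\widehat v)$, $U(\widehat v)=C(v)$) and immediately forget $\widehat v$, yielding a bag $Y'=\{v\}$ with $|F(Y')|=1=|Y'|$. More to the point, your target $|Y|\ge 3|B|$ is strictly \emph{stronger} than the theorem itself: since $B$ is an expensive subset of $Y$, the statement you are proving only promises \emph{some} bag of size $\ge 3|B|/2$, not that the particular bag $Y$ reaches $3|B|$. Weaker constants do not save the two-bag scheme either: plugging $|Y|\ge c\,|B|=c(k-|A|)$ into $\max\{k+|A|,\,c(k-|A|)\}$ meets $3k/2$ for all $|A|<k/2$ only when $c\ge 3$. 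So your approach requires an estimate that is provably unattainable, and no charging argument localised at $Y$ will produce it; indeed, if it held it would improve the main pathwidth bound from $5({\sf pw}+1)/3$ to $4({\sf pw}+1)/3$.

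The paper's route is organised quite differently and stays inside a single bag. It takes $X$ to be the \emph{first} bag carrying a $k$-expensive subset $S$, so the predecessor $X'$ carries none; this forces the newly introduced vertex $v_k$ to lie in $S$. All further work is a case analysis on how Rules~1 and~2 assigned $C(v_k)$ and $U(v_k)$. Minimality of $X$ is used repeatedly: whenever an auxiliary vertex (the $w$ with $U(w)=C(v_k)$, or the needy neighbour $y$ with $C(y)=U(v_k)$) threatens to have its $C$- or $U$-value outside the $2k-2$ colours already occupied by $S\setminus\{v_k\}$, one observes that $(S\setminus\{v_k\})\cup\{w\}$ (respectively $\cup\{y\}$) would be $k$-expensive in $X'$, a contradiction. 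This pins those values down, and then the tie-breaking minimisations in Rules~1 and~2 force enough further vertices of $X$ outside $S$ to yield $|X|\ge 3k/2$ directly, with no second bag and no global bound on $|F(\cdot)|$ needed.
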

\begin{proof}
In the sequence of bags seen by the algorithm, let $X$ be the
first bag that has an expensive subset of size $k$. We show
that $|X| \geq 3k/2$.
Let $S = \{v_1, v_2, \ldots, v_k\} \subseteq X$ be an expensive 
subset  of size $k$. For each $v_i$, let $C(v_i) = 2i - 1$ and 
$U(v_i) = 2i$. 

Let $X'$ be the bag that precedes $X$ in the sequence. 
By the choice of $X$, no $k$-expensive subset is  present in 
$X'$. It follows that $S \not \subseteq X'$.
Hence the bag $X$ must introduce a vertex\footnote{In 
the case where $X$ is a special bag that introduces two vertices, 
at most one of the two introduced vertices can be part of an expensive subset.}
 that belongs to $S$. 
Without loss of generality, let $v_k$ be this vertex introduced in $X$. 
Further wlog, let $v_1, \ldots, v_r$
be the needy vertices (in $X'$) of $S$ for some $1 \leq r \leq k$. If
none of the vertices in $S$ are needy, then we have that
$|X| \geq 2|S| = 2k$
and the theorem holds. So we can assume that $r \geq 1$.

Since the vertices $v_1, \ldots, v_r$ are needy 
in $X'$, we have  $\{2, 4, \ldots, 2r\} \subseteq F(X')$.
The vertices $v_{r+1}, \ldots, v_k$ are not needy because there
exist distinct vertices $Z = \{z_{r+1}, \ldots, z_k\}$
in the bag $X$ such that 
$C(z_i) = U(v_i) = 2i$ for $r+1 \leq i \leq k$. We have
three cases. 
In Cases 1 and 2, $X$ is a bag that introduces one vertex $v_k$.
Case 1 is when none of the colors in $F(X')$ was eligible 
to be assigned as $C(v_k)$. Hence $C(v_k)$ is assigned from outside
the set $\cup_{x \in X'} \{C(x), U(x)\}$.
Case 2 is when there are eligible colors in $F(X')$, and 
$C(v_k)$ is chosen from $F(X') \subseteq \cup_{x \in X'} \{C(x), U(x)\}$. 
Case 3 is when $X$ is a special bag that introduces two vertices.

\vspace{0.1in}
\noindent \textbf{Case 1:} $X$ is a bag that introduces one vertex $v_k$ and $2k - 1 \notin 
 \{U(x): x \in X'\}$. 
    There is no
    vertex $x\in X$ with $U(x) = 2k - 1$. To assign a color
    to $v_k$, the algorithm chose a new color. This means that 
    $F(X') \setminus  \{U(x): x \in N(v_k) \cap X'\} = \emptyset$.
    In particular, for each 
     $1 \leq i \leq r$, there\footnote{The vertex $v'_i$ may or may not be the same
     as $v_i$.}
     exists $v'_i \in N(v_k) \cap X'$
    such that $U(v'_i) = 2i$. Hence the colors $2i$, for $1 \leq i \leq r$
    cannot be assigned as $C(v_k)$.
    By Rule 2, we must set $U(v_k)$ to be the $C(y)$
    where $y$ is a needy neighbor of $v_k$. Hence $C(y) = 2k$.
    
    If $U(y) \notin \{1, 2, 3,\ldots, 2k-2\}$,
    then $(S \cup \{y\}) \setminus \{v_k\}$ is a $k$-expensive subset in $X'$, the predecessor of $X$. This 
    contradicts the choice of $X$.
    Hence we can assume that $U(y) \in \{1, 2, 3,\ldots, 2k-2\}$. Since $y$ is needy in $X'$, $U(y)$
    is not $C(v)$ for any $v \in X'$ and hence 
    we conclude $U(y) \notin \{1, 3, \ldots, 2k - 3\}$.
    Further, colors from
    $\{2(r+1), 2(r+2), \ldots, 2k-2\}$ appear as $C(z)$ for the vertices $z \in Z$. Hence 
    $U(y) \in \{2, 4, 6,\ldots, 2r\}$.
    
    Let $U(y) = 2j$ for some $1 \leq j \leq r$. Notice
    that $U(v_j) = 2j$ as well, giving us 
    $|\{x : x \in X', U(y) = U(x) \}| \geq 2$.
    
    By Rule 2, we chose $U(v_k) = C(y)$, where
    $y$ is the needy neighbor that minimizes    
    $|\{x : x \in X', U(y) = U(x) \}|$. 
    We chose $y$ over other needy neighbors $v'_1, \ldots, v'_r$
    of $v_k$.
    Hence there exist $r$ distinct 
    vertices $Y = \{y_1, \ldots, y_r\}$ in the bag $X$, disjoint from $S$, such 
    that $U(y_i) = U(v'_i) = 2i$ for each $1 \leq i \leq r$. 
    
    Note that the set $Y \cup Z$ must be disjoint
    from $S$, but $Y$ and $Z$ may intersect with each other. 
    Since $|Y| + |Z| = k$, we have $|Y \cup Z| \geq k/2$ and therefore $|X| \geq |S| + |Y \cup Z| \geq 3k/2$.

\vspace{0.1in}
\noindent \textbf{Case 2 (Proof Sketch):} 
$X$ is a bag that introduces one vertex $v_k$ and  $2k - 1 \in  \{U(x): x \in X'\}$. 
    For the sake of brevity and clarity, the full proof of Case 2 is deferred to Appendix \ref{sec:case2pw}. 
    The arguments are similar to the ones used in Case 1, but Case 2 requires
    a lengthier treatment. We give a sketch of the proof here. 
    
    In this case, $C(v_k)$ is chosen from $F(X')$. That is,
    $C(v_k)$ is chosen as $U(w) = 2k-1$, 
    for a vertex $w$ that is needy in $X'$, chosen according to 
    Rule 1. 
    It can be first established that $C(w) \in \{2(r+1), 2(r+2), \ldots, 2k - 2\}$.
    
    
    We know that  $\{2, 4, \ldots, 2r\} \subseteq F(X')$. 
    Let $\ell = |\{2, 4, \ldots, 2r\} \cap 
     \{U(x): x \in N(v_k) \cap X'\}|$.
    The color $2k-1$ was chosen as $C(v_k)$ over the
    other colors in $F(X')$. In particular, it was chosen over
    the $r - \ell$ colors in
    $\{2, 4, \ldots, 2r\} \setminus 
     \{U(x): x \in N(v_k) \cap X'\}$. 
    This is used to show the existence 
    of a set $W \subseteq X$ of size $r - \ell$ that is disjoint from $S$.
    
    Now we study why $U(v_k)$ was assigned as 2k. As per Rule 2,
    $U(v_k)$ was assigned as $C(y)$, where $y$ is a needy neighbor
    of $v_k$. There exists at least $\ell$ needy neighbors of 
    $v_k$. This is used to establish the existence of a set 
    $Y \subseteq X$ of size $\ell$ that is disjoint from $S$ and $W$.
    
    We thus have $|W \cup Y| = r$ and $|Z| = k - r$. The sets
    $W \cup Y$ and $Z$ are both disjoint from $S$, but need not be disjoint from each other.
    Hence $|X| \geq |S| + |W \cup Y \cup Z| \geq 3k/2$.
    
\vspace{0.1in}
\noindent \textbf{Case 3 (Proof Sketch):}  $X$ is a special bag that introduces $v_k$ and $\widehat v_k$. 
As in Case 2, the full proof of
Case 3 is given in Appendix \ref{sec:case3pw}. We give a sketch of the proof here.

If $F(X')=\emptyset$, then none of the $k-1$ vertices in $S \cap X'$ are needy in $X'$.
Hence $|X'|\geq 2(k-1)$. This implies that 
$|X|\geq 2(k-1)+2 =2k$ and we are done.  

Else, $|F(X')|\geq 1$.
Let us first note that since $S$ is an expensive subset, so is $S\cup \{\widehat v_k\} \setminus \{v_k\}$.
Since $|F(X')|\geq 1$, at least one of  $C(v_k)$ or $C(\widehat v_k)$ will be chosen from $F(X')$.
Without loss of generality, let 
$v_k$ be a vertex such that $C(v_k) \in F(X')$.
Let $C(v_k)=U(w)=2k-1$, where $w$  is a needy vertex in $X'$, chosen according to Rule 1. 
The rest of the arguments are very similar to the
arguments in the proof of Case 2. 

We finally establish the existence of a set $W \subseteq X$ such that $W$ is disjoint
from $S$ and $|W| = r$. Hence $|X| \geq |S| + |W \cup Z| \geq 3k/2$.
    \qed

\end{proof}

Now we prove the main theorem of this section.

\begin{proof}[Proof of Theorem \ref{thm:mainpw}]
We apply the algorithm in a nice path decomposition
of $G$, which satisfies the condition in Theorem \ref{thm:pwneighbor}.
As stated before, the correctness follows by the stated invariants, 
and what remains to be shown is the bound on the number of colors 
necessary.

Consider any bag $X$ of the path decomposition. Then we have 
$$|\cup_{w \in X} \{C(w), U(w)\}| = |X| + |\mbox{Extra}(X)|, $$
where $\mbox{Extra}(X)$ denotes the set of colors 
that feature as unique colors, but not as colors of vertices
in $X$. 

We construct a subset $Y$ of $X$ as follows. For each color in $\mbox{Extra}(X)$, we include exactly one vertex $y$ in $Y$ 
such that $U(y)$ is that color. We have $|Y| = |\mbox{Extra}(X)|$.
We also have that $Y$ is an expensive subset of $X$. Since
no bag is of size bigger than ${\sf pw}(G) + 1$, it follows that
 by Theorem \ref{thm:technical} that $|Y| \leq 2({\sf pw}(G)+1)/3$. Since $|Y|$ is an integer, we can say $|Y| \leq \floor{2({\sf pw}(G)+1)/3}$. Hence 
$$|\cup_{w \in X} \{C(w), U(w)\}| \leq |X| + \floor{2({\sf pw}(G)+1)/3}. $$

In the algorithm, we need to add a new color to the bag only when 
a bag $X$ is followed by another bag that introduces a vertex. 
Hence we may require one additional color, which brings 
the maximum number of colors needed to $|X| + \floor{2({\sf pw}(G)+1)/3} + 1 \leq
{\sf pw}(G) + \floor{2({\sf pw}(G)+1)/3} + 1 = \floor{5({\sf pw}(G)+1)/3}$.
\qed
\end{proof}

%

\subsection{Proof of Theorem \ref{thm:pwneighbor}}\label{sec:pwneighbor}
A path decomposition  
$(X_1,
X_2, \ldots, X_s)$ is called a \emph{nice path decomposition} if the following 
hold:
\vspace{-0.05in}
\begin{itemize}
    \item $X_1 = X_s = \emptyset$.
    \item For all $p \in \{2, 3, \ldots, s\}$, there is a vertex $v$ such that
    either $v \notin X_{p-1}$
    and $X_{p} = X_{p-1} \cup \{v\}$, or $v \in X_{p-1}$ and 
    $X_{p} = X_{p-1} \backslash \{v\}$. In the former case, we say $X_{p}$ \emph{introduces} 
    $v$, and in the latter case we say $X_{p}$ \emph{forgets} $v$.
\end{itemize}
\vspace{-0.05in}
It is known \cite{parabook} that every graph $G$ has a nice path decomposition 
of width equal to ${\sf pw}(G)$, and that 
every nice path decomposition has exactly
$2|V(G)| + 1$ bags.

Consider a nice path decomposition of the graph $G$.
If all the vertices have a neighbor in the bag that introduces them, then the nice
path decomposition is itself a semi-nice path decomposition, and we are done.
Otherwise, we explain 
how to convert the given nice path decomposition
into a semi-nice path decomposition. 
We say that a bag $X$ is a \emph{violating bag} if it 
introduces a vertex $v$ and $X \cap N(v) = \emptyset$.
The violating bags do not follow the rules SN1, SN2, or SN3 from Definition \ref{def:seminice}. Instead they follow the below rule SN$1'$.
\begin{description}
    \item[SN$1'$.] There is a vertex $v$ such that
     $v \notin X_{p-1}$ and $X_{p} = X_{p-1} \cup \{v\}$. In this case, we say 
     that $X_{p}$ \emph{introduces} $v$. Further, when $X_{p}$ introduces $v$,
     $N(v) \cap X_{p} = \emptyset$.
\end{description}
We say that a path decomposition is a \emph{$t$-violating semi-nice path 
decomposition} if there are $t$ violating bags and the rest 
of the bags obey one of the rules SN1, SN2 or SN3 from Definition \ref{def:seminice}.

We ``fix'' each violating bag by modifying the path decomposition. The fix involves delaying the introduction
of a vertex until it has a neighbor, and possibly creating  a ``special bag'' that introduces two vertices.
Throughout the fix-up process, the path decomposition in hand will be a $t$-violating semi-nice decomposition, with every step of the fix-up decrementing $t$ by one. We now
explain the fix-up process.

\noindent \textbf{Fix-up Process:} Given a $t$-violating semi-nice path decomposition
$\mathcal P = (X_1, X_2, \ldots, X_s)$, we explain how
to obtain a $(t-1)$-violating semi-nice path decomposition $\mathcal P'$.

Let $X_{p_1}$ be a violating bag that introduces the vertex $v$, which
is forgotten by the bag $X_{p_2}$.
By assumption, $X_{p_1} \cap N(v) = \emptyset$. Let 
$X_q$ be the first bag in the sequence that contains a
neighbor of $v$. Since $G$ does not have isolated vertices, $N(v)$ is non-empty, 
and hence $p_1 < q < p_2$. 
Let $\widehat v \in X_q$ be a neighbor of $v$.
We have two cases.

\noindent \textbf{Case 1:} $|N(\widehat v) \cap X_q| > 1$. That is, $\widehat v$ 
has other neighbors in $X_q$ apart from $v$.
We consider the following modified
sequence $\mathcal P'$: 
$$X_1, \dots, X_{p_1 - 1}, X_{p_1 + 1} \backslash \{v\}, X_{p_1 + 2} \backslash \{v\}, \dots, X_{q-1} \backslash \{v\}, X_q \backslash \{v\},
X_q, X_{q+1}, \dots, X_s$$
That is, we delay the introduction of $v$ till its first neighbor, $\widehat v$, has 
been introduced. It can be verified that the above sequence $\mathcal P'$ is a 
path decomposition of the same graph $G$ with width no more than the 
width of $\mathcal P$. In the new sequence $\mathcal P'$, $v$ is introduced by $X_q$ which is not a violating bag in $\mathcal P'$.
Below, we explain that the fix-up process has not introduced any new violations. 
Since $v$ sees $\widehat v$ in $X_q$ for the first time, it follows 
that the bag $X_q$ introduces $\widehat v$ in $\mathcal P$.

We first note that $X_q$ was not a special bag in $\mathcal P$. 
To see why, let us assume the contrary. Let $X_q$ be a special 
bag in $\mathcal P$ that introduces the vertices $\widehat v$ and  $w$.
If so, $N(\widehat v) \cap X_q = \{w\}$ as per SN3. Hence $v \notin X_q$.
So we can conclude that $X_q$ is not a special bag, and therefore 
$X_q$ introduces just one vertex $\widehat v$ as per SN1.
This means that $v$ cannot have any other neighbors in $X_q$ apart 
    from $\widehat v$. Hence $\widehat v$ is the only vertex that
    loses a neighbor from its introducing bag due to the fix-up process.
    However, since $\widehat v$ has other neighbors in $X_q$ apart from 
    $v$, it does not result in a violation in $\mathcal P'$.


\noindent \textbf{Case 2:} $|N(\widehat v) \cap X_q| = 1$. That is, $v$ is the
lone neighbor of $\widehat v$ in $X_q$. Since $X_q$ is the first bag in $\mathcal P$ 
that contains a neighbor of $v$, it follows that $X_q$ introduces $\widehat v$.
Since $v$ is the lone neighbor of $\widehat v$ in $X_q$, it follows that $X_q$ is not a special bag in $\mathcal P$.
Hence $\widehat v$ is the lone neighbor of $v$ in $X_q$.
We consider the following sequence $\mathcal P'$:
$$X_1, \dots, X_{p_1 - 1}, X_{p_1 + 1} \backslash \{v\}, X_{p_1 + 2} \backslash \{v\}, \dots, X_{q-1} \backslash \{v\}, 
X_q, X_{q+1}, \dots, X_s$$
We introduce $v$ together with $\widehat v$ in the bag $X_q$. We have already seen that
$N(v) \cap X_q = \{\widehat v\}$ and $N(\widehat v) \cap X_q = \{ v\}$. Thus $X_q$ becomes
a special bag in $\mathcal P'$ that introduces $v$ and $\widehat v$. 
No other violations have been introduced by this because $v$ does not
have any neighbors in $X_{p_1 +1}, X_{p_1 +1}, \ldots, X_{q-1}$.

Thus by repeating this fix-up process for each of the violations, we can convert
the given nice path decomposition into a semi-nice path decomposition.
\qed

\section{Feedback Vertex Set}\label{sec:fvs}


\begin{definition}[Feedback Vertex Set]
Let $G=(V,E)$ be an undirected graph. 
A \emph{feedback vertex set (FVS)} is a set of vertices $S \subseteq V$, removal of which from the graph $G$ makes the
remaining graph ($G[V\setminus S]$) acyclic. 
The size of a smallest such set $S$ is denoted as ${\sf fvs}(G)$.
\end{definition}

\begin{theorem}\label{thm:fvs}
$\chicf \leq {\sf fvs}(G) + 2$. 
\end{theorem}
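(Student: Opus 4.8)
The plan is to fix a minimum feedback vertex set $S$ with $|S| = {\sf fvs}(G)$, color all of $S$ with distinct colors (using $|S|$ colors), and then color the forest $F = G[V \setminus S]$ using $2$ additional colors, say $\alpha$ and $\beta$, that are not used on $S$. The key point is that giving each vertex of $S$ its own private color makes every vertex with a neighbor in $S$ automatically conflict-free (that $S$-neighbor is uniquely colored in its open neighborhood), so the remaining work is to guarantee a uniquely colored neighbor for (i) every vertex of $S$ and (ii) every vertex of $F$ whose neighbors all lie inside $F$. First I would root each tree of the forest $F$ and process it top-down; the two colors $\alpha,\beta$ give us enough freedom on a forest because the neighborhood of a non-root vertex in $F$ splits into its parent and its children, and we can set the children's colors to control what appears in the parent's neighborhood. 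Concretely, for a vertex $v \in F$ with children $c_1, \dots, c_d$ in $F$, we can always color exactly one child with a color different from the other children (when $d \geq 1$), making that child uniquely colored in $N(v)$; the leaves of $F$ then rely on their parent, and we handle the parent–leaf interaction carefully.

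The main steps, in order: (1) take $S$ to be a minimum FVS and assign the $|S|$ vertices of $S$ pairwise distinct colors $1, \dots, |S|$; (2) observe every vertex (in $S$ or in $F$) that has at least one neighbor in $S$ is already conflict-free, so it remains to serve the vertices of $F$ with $N(v) \subseteq V(F)$ and the vertices of $S$; (3) root the forest $F$ and color it greedily from the root using only $\{\alpha, \beta\}$, at each internal vertex arranging the colors of its forest-children so that one child is uniquely colored among the children and so that the vertex itself is uniquely colored either via a child or via its parent; (4) finally, ensure each $s \in S$ has a uniquely colored neighbor — if $s$ has a neighbor in another component or in $S$ with a color appearing once, we are fine, and otherwise we use the freedom in coloring $F$ (or a small local recoloring near $s$) to make one forest-neighbor of $s$ uniquely colored; (5) verify the total count is $|S| + 2 = {\sf fvs}(G) + 2$, and that the bound is tight by exhibiting a graph meeting it.

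The step I expect to be the main obstacle is step (3)–(4): on a forest, two colors are exactly on the boundary of what suffices for conflict-free coloring (indeed $\chi_{ON}$ of a path can already require $3$ colors in isolation — e.g. $P_3$), so we genuinely need to exploit the colors on $S$. The delicate cases are forest-vertices all of whose children are leaves, forest-vertices that are leaves themselves, and vertices of $S$ whose only neighbors lie in a single tree of $F$; in each of these we have little local room and must carefully interleave the $\alpha/\beta$ pattern, possibly treating short paths and small subtrees as base cases and doing a bottom-up or hybrid pass rather than a naive top-down greedy. I would structure the argument as an induction on the trees of $F$ with a strengthened hypothesis recording, for the current root, which of its colors is "safe" (appears with multiplicity one in its subtree-neighborhood) so that the parent can react. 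Handling the vertices of $S$ would be folded in at the end by a local adjustment argument, using that $S$ being a feedback vertex set means the neighbors of $s$ in $F$ induce a forest with no edges forced among them beyond what we already control.
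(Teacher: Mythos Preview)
Your high-level plan---give the feedback vertex set $S$ pairwise distinct colours $1,\dots,|S|$, two-colour the forest $G[V\setminus S]$ with two fresh colours, then patch up---is exactly the skeleton the paper uses. But you misidentify where the work lies, and the sketch contains a concrete error.

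First, your claim that $\chi_{ON}(P_3)=3$ is false: every tree on at least two vertices satisfies $\chi_{ON}\le 2$. Root the tree, give the root colour~$\alpha$ and one designated child $s$ colour~$\beta$, give every other child of the root colour~$\alpha$, and for each remaining vertex use the colour not used by its grandparent; then every non-root has its parent as uniquely coloured neighbour, and the root has $s$. This is the paper's Lemma~\ref{lem:tree2col}. So step~(3) is not ``on the boundary'' at all: a one-shot two-colouring of each tree already makes every forest vertex conflict-free inside the forest, and any forest vertex with an $S$-neighbour is trivially fine as you observed. The elaborate strengthened induction you propose for the forest is unnecessary.

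Second---and this is the real gap---step~(4) is where essentially all the content is, and ``a small local recoloring near $s$'' is not yet a plan. The problematic vertex is $s\in S$ with $N(s)\subseteq V\setminus S$, so all of $N(s)$ carries only $\alpha,\beta$. You cannot freely recolour an arbitrary neighbour of $s$, because every forest vertex is currently serving as the uniquely coloured parent for its children. The paper's fix is to recolour one specific forest neighbour of $s$, namely the \emph{deepest} one $w$ in some tree $T$, to the colour $i$ of $s$ itself. Depth matters: it guarantees that no child of $w$ is adjacent to $s$, so each child of $w$ still sees colour~$i$ exactly once (on its parent $w$) and remains conflict-free; and $w$'s own parent never relied on $w$, so nothing else breaks. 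Even with this idea in hand, corner cases remain---singleton trees in the forest, the case $w=s_T$ interacting with the root, and what to do when for every tree $T$ one has $N(s)\cap V(T)=\{r_T,s_T\}$---and the paper spends a multi-case analysis on them (including a separate treatment of $|S|=1$). Your proposal would need to discover the deepest-neighbour trick or an equivalent, and then carry out that case analysis, before it becomes a proof.
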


The following graph (as observed in \cite{Boe2019}), shows that the above theorem
is tight.
Let $K^*_n$ be the graph obtained by starting with 
the clique on $n$ vertices, and subdividing each edge 
with a vertex.
Then $K^*_n$ has an 
FVS of size $n-2$, and it can be seen that  $\chi_{ON}(K^*_n) = n$. 

The proof of this theorem is through a constructive process to \cf{} color the vertices 
of the graph $G$, given a feedback vertex set $F$ of $G$.
By definition, $G[V\setminus F]$ is a collection of trees.

Each tree $T$ in $G[V\setminus F]$ is rooted at 
an arbitrary vertex $r_T$.
If $|V(T)|\geq 2$, we choose a neighbor of $r_T$ and call it the \emph{special vertex} in $T$, denoted by $s_T$. 
Let $v$ be a vertex not in $T$. 
The \emph{deepest neighbor} of $v$ in $T$, denoted by $deep_T(v)$, is a 
vertex $w\in V(T)\cap N(v)$ such that $dist_T(r_T, w)$ is maximized.
If there are multiple such vertices at the same distance, the 
deepest neighbor is chosen to be a vertex which is not the special vertex $s_T$.

\begin{lemma}\label{lem:tree2col}
Let $T$ be a tree with $|V(T)| \geq 2$. 
Then $\chi_{ON}(T) \leq 2$.
\end{lemma}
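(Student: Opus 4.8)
The plan is to exhibit an explicit 2-coloring of any tree $T$ with $|V(T)| \geq 2$ and verify directly that every vertex has a uniquely colored neighbor. First I would root $T$ at an arbitrary vertex $r$ and consider the standard bipartition of $T$ into the even and odd levels (vertices at even distance from $r$ versus odd distance). The natural first guess is to color by parity of level, but this fails at leaves whose only neighbor is their parent only if the parent has no constraint — actually the parity coloring already gives each non-leaf vertex many neighbors on the other side, so the real work is arranging that \emph{some} color appears exactly once in each neighborhood. The cleaner route is the following: order the vertices $v_1, v_2, \ldots, v_n$ by a BFS (or DFS) from $r$, so that every $v_i$ with $i \geq 2$ has its parent among $v_1, \ldots, v_{i-1}$, and build the coloring greedily.

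Here is the approach I would actually carry out. Use colors $\{1,2\}$. Color $r$ with color $1$. Process the remaining vertices in BFS order; when processing $v_i$, it already has its parent $p(v_i)$ colored, and I would assign $v_i$ the color that makes it a "fresh" witness for its parent when needed, or more simply: I claim the two-coloring where each vertex $v$ gets color $1$ if $\mathrm{dist}_T(r,v)$ is even and color $2$ if it is odd \emph{almost} works, and the only vertices potentially without a uniquely colored neighbor are internal vertices with two or more children of the same color as... no — each vertex's neighborhood consists of its parent (one level up) and its children (one level down), and under parity coloring the parent has one color while all children share the other color. So the parent is \emph{always} a uniquely colored neighbor of $v$, \emph{provided $v$ has a parent}. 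The only vertex without a parent is the root $r$. Hence the sole obstruction is the root: $N(r)$ consists entirely of color-$2$ vertices (all children of $r$), so $r$ has a uniquely colored neighbor iff $r$ has exactly one child.

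To finish, I would handle the root separately. Since $|V(T)| \geq 2$, the root $r$ has at least one child. If $r$ has exactly one child, we are already done. If $r$ has two or more children, pick one child $c$ of $r$ and \emph{recolor} $c$ with color $1$ (same as $r$); now among the children of $r$, exactly one vertex (namely $c$, if all other children keep color $2$ — wait, I must check this does not break $c$ or $c$'s subtree). Recoloring $c$ to color $1$: the vertex $c$ now has parent $r$ of color $1$ and children of color $2$ (unchanged), so $c$ still has a uniquely colored neighbor (any child, if it has exactly one, or its parent $r$ — but $r$ has color $1$ same as $c$, and $c$'s children all have color $2$, so if $c$ has $\geq 2$ children then $c$ needs a unique color: its neighborhood is $\{r\} \cup \mathrm{children}$, with $r$ colored $1$ and all children colored $2$, so $r$ is the unique color-$1$ neighbor — fine). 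The children of $c$ are unaffected since their parent $c$ changed color but was their only neighbor of that level; each such child $d$ has parent $c$ (now color $1$) and its own children (color $2$), so $c$ is a uniquely colored neighbor of $d$. The other children of $r$ still have parent $r$ (color $1$) as a uniquely colored neighbor among... their neighborhood is $\{r\} \cup (\text{their children, color } 2)$, so $r$ is their unique color-$1$ neighbor. Finally $r$ itself: its children now consist of $c$ (color $1$) and $\geq 1$ others (color $2$); if there is exactly one other child, both colors appear once and we are done; if there are $\geq 2$ others, color $1$ appears exactly once in $N(r)$ (only $c$), so $r$ has a uniquely colored neighbor. Thus in all cases the coloring is a valid \cf{} coloring with $2$ colors.

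I do not expect a serious obstacle here; the only subtlety is the bookkeeping around the root, and the key insight that the level-parity coloring automatically furnishes every non-root vertex with its parent as a uniquely colored neighbor. I would present the argument by first stating the parity coloring, observing the parent-witness property, and then doing the small case analysis on the degree of the root to patch the one remaining vertex.
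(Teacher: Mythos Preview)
Your argument has a basic error that undermines it from the start. Under the parity coloring (color $1$ at even depth, color $2$ at odd depth), the parent of a vertex $v$ at depth $d$ lies at depth $d-1$ and the children lie at depth $d+1$; since $d-1$ and $d+1$ have the \emph{same} parity, parent and children receive the \emph{same} color, not different colors as you assert. Thus $N(v)$ is monochromatic for every $v$, and $v$ has a uniquely colored neighbor only when $|N(v)| = 1$, i.e.\ when $v$ is a leaf. Parity coloring is therefore a valid \cf{} coloring only when $T$ is a single edge, and your root-patching step cannot repair this: every internal vertex at depth $\geq 2$ still has a monochromatic open neighborhood regardless of what you do to one child of the root. (Your later claim that the children of the recolored vertex $c$ have color $2$ is a symptom of the same parity slip: they sit at depth $2$ and hence carry color $1$, so after recoloring $c$ the whole of $N(c)$ is color $1$.)

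The paper's coloring is designed to avoid exactly this. It sets $C(r_T) = 1$, picks one child $s_T$ with $C(s_T) = 2$, gives every other child of $r_T$ color $1$, and for each deeper vertex assigns the color opposite to that of its \emph{grandparent}. This grandparent rule forces the colors at depths $d-1$ and $d+1$ to differ along every root-to-leaf path, so for every non-root vertex the parent is colored differently from all the children and serves as the uniquely colored neighbor; the special child $s_T$ plays that role for $r_T$.
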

\begin{proof}
We assign colors $C:V(T) \rightarrow \{1, 2\}$ in the following manner. 
\begin{itemize}
    \item Assign $C(r_T)=1$ and $C(s_T)=2$. 
    \item For each vertex $v\in N_T(r_T)\setminus \{s_T\}$, assign $C(v)=1$. 
    \item For the remaining vertices $v \in V(T)$, assign $C(v)=\{1, 2\}\setminus C(w)$, 
    where $w$ is the grandparent of $v$. 
\end{itemize}
For each vertex $v\in V(T)\setminus \{r_T\}$, the uniquely colored neighbor is its parent. For $r_T$, the uniquely colored neighbor is $s_T$. This is a \cf{} 2-coloring of $T$. 
\qed
\end{proof}

%


We first prove a special case of Theorem \ref{thm:fvs}.
\begin{lemma}\label{lem:whenfeq1}
Let $G=(V,E)$ be a graph and $F\subseteq V$ be a feedback vertex set with $|F|=1$. Then $G$ can be \cf{} colored using 3 colors. 
\end{lemma}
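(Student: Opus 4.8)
The plan is to use the single feedback vertex $f$ as a "hub" and 2-color the forest $G-f$ via Lemma~\ref{lem:tree2col}, then introduce a third color carefully so that both $f$ and its neighbors get a uniquely colored neighbor. First I would write $F=\{f\}$, so $T := G[V\setminus\{f\}]$ is a forest; since $G$ is connected, each tree component of $T$ has at least one vertex adjacent to $f$. Apply Lemma~\ref{lem:tree2col} to each component of $T$ with at least two vertices to obtain a \cf{} $2$-coloring $C:V(T)\to\{1,2\}$ in which every vertex $v\neq r_T$ has its parent as a uniquely colored neighbor and $r_T$ has $s_T$. (Single-vertex components of $T$ need separate care, since such a vertex has $f$ as its only neighbor; it will be forced to rely on $f$.) The issue is that adding $f$ back can destroy the uniqueness of a vertex $v$'s witness $w$ only if $f$ itself is colored $C(w)$ — that is the only new neighbor any $v\in V(T)$ acquires. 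So the real work is choosing $C(f)$ and possibly recoloring a few vertices near $f$.

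The key steps, in order: (1) Assign $C(f)=3$, a color not used in $T$. This immediately guarantees that no vertex $v\in V(T)$ loses its witness, because $f$'s color $3$ differs from $C(w)$ for every $w\in V(T)$; hence every $v\in V(T)$ with $\deg_T(v)\geq 1$ still has its tree-parent (or $s_T$) as a uniquely colored neighbor. The only vertices not yet handled are $f$ itself and any isolated-in-$T$ vertices (vertices whose sole neighbor is $f$). (2) Handle $f$: we need some color class to intersect $N(f)$ in exactly one vertex. If $f$ has exactly one neighbor, done trivially. Otherwise, pick a neighbor $u$ of $f$ and recolor $u$ with color $3$ if doing so both (a) makes $u$ the unique color-$3$ vertex in $N(f)$ — automatic, since $f\notin N(f)$ and $u$ was the only vertex we pushed to color $3$ — and (b) does not destroy $u$'s own witness or the witness of any child of $u$ in $T$. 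This is where I expect to juggle cases: recoloring $u$ to $3$ can hurt only $u$'s children (who used $u$ as witness, now still fine since their witness color changes value but remains unique) and $u$'s parent (if the parent used $u$)... so I would instead choose $u$ to be a leaf of $T$, or more robustly, choose $u$ so that $u$ is not the unique-colored-neighbor witness of any vertex; such a $u$ exists because... (3) For each vertex $x$ isolated in $T$ (only neighbor is $f$, which is colored $3$): then $N(x)=\{f\}$ and $C(f)=3$ is trivially a unique color in $N(x)$, so these are automatically satisfied.

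The main obstacle is step (2): coordinating the choice/recoloring of a neighbor $u$ of $f$ so that $f$ gets a unique neighbor in color $3$ while not breaking any tree witness. The cleanest route is: if $f$ has a neighbor $u$ that is a leaf of $T$ (degree $1$ in $T$), recolor $u$ to $3$; then $u$ was a witness for no one except possibly its parent $z=\mathrm{parent}(u)$, and $z$'s witness was its own parent (or $s_T$), not $u$, by the structure of the Lemma~\ref{lem:tree2col} coloring — so nothing breaks, and $u$ itself still has witness $z$ since $C(z)\in\{1,2\}$ is still unique in $N_T(u)=\{z\}$ and $f$ is color $3\neq C(z)$... wait, $u$ now needs a unique neighbor: $N(u)=\{z,f\}$ with colors $C(z)\in\{1,2\}$ and $3$, both unique, fine. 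If $f$ has no leaf-of-$T$ neighbor, then every neighbor of $f$ in $T$ has a child; I would then recolor a neighbor $u$ of $f$ of maximum depth in its tree, so $u$'s children are leaves and re-verify their witnesses directly. A short case analysis on whether $f$'s chosen neighbor is a root, the special vertex, or an internal vertex of its tree finishes the argument; each case only requires re-checking a bounded neighborhood, so the verification is routine once the right $u$ is selected.
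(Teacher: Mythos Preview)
Your overall framework matches the paper's: give $f$ a fresh color, 2-color the forest via Lemma~\ref{lem:tree2col}, then repair $f$'s uniqueness by recoloring a single neighbor $u$ of $f$. The gap is that step (2) cannot always be carried out this way. Take $G$ on $\{f, a_1, b_1, a_2, b_2\}$ with edges $fa_1, fb_1, a_1b_1, fa_2, fb_2, a_2b_2$; here $\{f\}$ is a feedback vertex set and $G-f$ is two disjoint edges. After the tree-coloring, each $a_i$ is the unique witness for $b_i$ and conversely, so \emph{every} neighbor of $f$ is the sole witness of some other vertex. Recoloring any one of them to $3$ gives that other vertex two color-$3$ neighbors (the recolored vertex and $f$) and hence no unique color. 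Your trailing ``such a $u$ exists because\ldots'' is precisely where this breaks; your leaf argument also silently assumes the chosen leaf is not $s_T$, which is false here.

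The paper's proof has the same skeleton but actually performs the case analysis you defer. It recolors the deepest neighbor $deep_T(v)$ (with a tie-breaking rule to avoid $s_T$), and crucially isolates the bad configuration $N(f)\cap V(T)=\{r_T,s_T\}$ for every component $T$. When some such $T$ has $|V(T)|\ge 3$ it re-roots $T$ to escape into an earlier case; when all components have exactly two vertices it abandons the single-vertex recoloring entirely and globally flattens all but one component to a single color. This last step is not a bounded-neighborhood patch, so your claim that ``each case only requires re-checking a bounded neighborhood'' is where the plan fails.
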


\begin{proof}
Let $F=\{v\}$. First using Lemma \ref{lem:tree2col}, we color all the trees $T \subseteq G[V \setminus F]$ using the colors 2 and 3, whenever $|V(T)| \geq 2$. All the singleton
components of $G[V \setminus F]$ are assigned the color 2. We assign $C(v) = 1$. 
Now all the vertices, except possibly $v$, have a uniquely colored
neighbor.
We explain how to fix this and obtain a \cf{} 
coloring.

\vspace{-0.05in}
\begin{itemize}
    \item \textbf{Case 1: There exists a singleton component $\{w\} \subseteq G[V\setminus F]$.}
    
    Reassign $C(w)=1$. 
    
    \item \textbf{Case 2:  Else, if there exists a component $T\subseteq G[V\setminus F]$, such that 
    either (i) $deep_T(v)\neq s_T$ or (ii) $deep_T(v)=s_T$ and $\{r_T, v\} \notin E(G)$.}
    
    Reassign $C(deep_T(v))=1$. 
    
    \item \textbf{Case 3: Else, for each component $T\subseteq G[V\setminus F]$, $N(v) \cap V(T) = \{r_T, s_T\}$.}
    
    If there exists a component $T\subseteq G[V\setminus F]$, such that $|V(T)|\geq 3$, 
    choose a vertex $w\in V(T)\setminus \{r_T,s_T\}$ and set $w$ as the new root of $T$. Reassign $s_T$ and the colors  of $V(T)$ accordingly.
    Doing so will ensure that $deep_T(v)\neq s_T$.
    We apply Case 2. 
    
    Else, for all the components $T\subseteq G[V\setminus F]$, 
    we have $|V(T)|=2$. 
    Choose a component $T'\subseteq G[V\setminus F]$.
    For all the other vertices $w \in V \setminus (\{v\} \cup V(T'))$, reassign $C(w)= 2$.
\end{itemize}
\vspace{-0.05in}
All the trees in $G[V\setminus F]$ are \cf{} colored as per the earlier 
described procedure. Even after reassigning some colors, they remain
\cf{} colored. 
The vertex $v$ sees another vertex $w$, with $C(w) = 1$ if in Case 1 or 2. In the last case, $v$ sees a unique vertex that 
is colored 3. 
\qed
\end{proof}


\subsection{Proof of Theorem \ref{thm:fvs}}
When $|F|=1$, three colors are sufficient to \cf{} color $G$ by Lemma~\ref{lem:whenfeq1}. 
Now, we consider the case when 
$|F| \geq 2$. 
We assign colors $C:V(G)\rightarrow [|F|+2]$ in such a way that 
$G$ is \cf{} colored.
First by Lemma \ref{lem:tree2col}, we color all the components $T\subseteq G[V\setminus F]$ with $|V(T)| \geq 2$, 
using the colors $|F|+1$ and $|F|+2$. 

During the algorithm, we will keep track of a color which when assigned to the isolated vertices in the feedback vertex set  
does not change the unique color in the neighborhood 
of the already colored vertices. We call this color a \emph{free color} and denote this
by $c'$, initialized to 0.

Let $F = \{v_1, v_2, \dots, v_{|F|}\}$. 
Let $Y= \{v_i \in F : \mbox{deg}_F (v_i)\geq 1\}$. 
For each $v_i\in Y$, assign $C(v_i)=i$. Note that for 
every $v_i\in Y$, there is at least one uniquely colored neighbor in $Y$. 
If $Y\neq \emptyset$, choose an arbitrary vertex $v_i \in Y$ and set $c' = i$.

Now the vertices of $Y$ and components $T\subseteq G[V\setminus F]$ with $|V(T)| \geq 2$ are colored and have a uniquely colored neighbor. What remains are the 
 vertices in $F\setminus Y$ and singleton components of $G[V\setminus F]$. Below 
 we explain how to color them in phases.
 
Recall that $G[F\setminus Y]$ is an independent set. 

\noindent \textbf{Case 1: Singleton component $\{w\} \subseteq G[V\setminus F]$ 
    where $w$ has at least 1 uncolored neighbor.} 
    \begin{itemize}
        \item Let $v_{i_1}, v_{i_2}, \dots , v_{i_m} \in F\setminus Y$ be the uncolored neighbors of $w$, where $m\geq 1$. 
        \item 
        Assign $C(v_{i_1})=C(w)=i_1$ and $C(v_{i_j})=i_2$ for all $2\leq j\leq m$. 
        The uniquely colored neighbor for $w$ is $v_{i_1}$ and for all $v_{i_j}$, it is the vertex $w$. 
        \item The free color is set as $c'=i_1$.
    \end{itemize}
    
\noindent \textbf{Case 2: Singleton component $\{w\}\subseteq G[V\setminus F]$, 
    where all of $N(w)$ is colored and $w$ has no uniquely colored neighbor.} 
    
    \noindent This means that $N(w) \geq 2$, and every color in $N(w)$ appears at least twice.
     Choose two vertices $v_{i_1}, v_{i_2}\in N(w)$ such that $C(v_{i_1}) = C(v_{i_2})$. It must be the case that at least one of the colors $i_1, i_2$ does 
    not appear in $N(w)$. Without loss of generality, let it be $i_1$.
    \begin{itemize}
    \item Reassign $C(v_{i_1}) = i_1$. Assign $C(w) = |F|+ 1$.
        The uniquely colored neighbor for $w$ is $v_{i_1}$. Notice that 
        all the vertices in $N(w)$ would have received their uniquely colored
        neighbors when they were assigned a color.
        \item The free color is set  as $c'=i_1$.
    \end{itemize}

Now, all the singleton components $\{w\} \subseteq V\setminus F$ have uniquely colored neighbors, but not all of them may be colored.
Assign all the uncolored singleton components the color $|F|+1$. 
 What remains to be addressed are the 
remaining uncolored vertices in $F\setminus Y$. These vertices 
do not have any 
singleton components of $G[V\setminus F]$ as neighbors. 
We assign colors if the below Cases 3 or 4 apply.

\noindent \textbf{Case 3: A component $T\subseteq G[V\setminus F]$ such that $s_T$ has at least two uncolored vertices in $F\setminus Y$ as neighbors.}
     \begin{itemize}
       \item Let $v_{i_1},v_{i_2}, \dots , v_{i_m} \in F\setminus Y$ 
       be the uncolored neighbors of $s_T$, with $m\geq 2$. 
        \item Reassign $C(s_T)=i_1$ and assign $C(v_{i_j})=i_2$ for all $1 \leq j\leq m$. The vertex $s_T$ serves as the uniquely
        colored neighbor for the vertices $v_{i_j}$.
        \item The free color is set  as $c'=i_2$.
    \end{itemize}
   

\noindent \textbf{Case 4: There exists an uncolored vertex 
$v_i$ and component $T\subseteq G[V\setminus F]$, such that either (i) $deep_T(v_i)\neq s_T$
    or  (ii)  $deep_T(v_i)=s_T$ and $\{r_T,v_i\} \notin E(G)$.} 
     \begin{itemize}
       \item Reassign $C(deep_T(v_i))=i$ and assign $C(v_i) = i$.
       The vertex $deep_T(v_i)$ serves as the uniquely colored
       neighbor for $v_i$.
       \item The free color is set as $c'=i$.
    \end{itemize}

\noindent \textbf{Case 5 : There exists an uncolored vertex $v_i$ such that for each component $T\subseteq G[V\setminus F]$, either 
$N(v_i) \cap V(T) = \{r_T, s_T\}$ or $N(v_i) \cap V(T) = \emptyset$.}
\noindent We make use of the free color $c'$ obtained from the previous cases. In this case, we reassign $C(s_T)=i$ and assign $C(v_i)=c'$. 
The vertex $s_T$ will serve as the uniquely colored neighbor for $v_i$.

Now we explain why we must have a non-zero free color. If $c'=0$, we have 
that $Y = \emptyset$ and none of the previous cases have been applicable. That is: 
       \begin{enumerate}
            \item $Y = \emptyset$.
           \item There are no singleton components in $G[V\setminus F]$. 
           \item For each vertex $v_i \in F$ and 
           for each component $T\subseteq G[V\setminus F]$, either 
            $N(v_i) \cap V(T) = \{r_T, s_T\}$ or $N(v_i) \cap V(T) = \emptyset$.
            \item  For each component $T\subseteq G[V\setminus F]$, 
            $|N(s_T) \cap F| \leq 1$.
           \end{enumerate}
     Since $|F|\geq 2$, let us consider $v_1, v_2 \in F$. Notice 
     that due to the above, it is not possible to have a path from
     $v_1$ to $v_2$ in $G$. This means that $G$ is not connected.
     This is a contradiction. Thus we must have $c' \neq 0$.
     
     We have described a procedure to obtain a \cf{} coloring 
     that uses $|F| +2$ colors. By setting $F$ to be a minimum
     sized FVS, we get $\chicf \leq {\sf fvs}(G) + 2$.
     \qed

\section{Neighborhood Diversity \& Distance to Cluster}\label{sec:op}

In this section, we give
improved bounds for $\chion$ and $\chicn$ with respect to the parameters
neighborhood diversity and distance to cluster.

\subsection{Neighborhood Diversity}\label{sec:nd}

\begin{definition}[Neighborhood Diversity \cite{gargano2015}]
Give a graph $G = (V, E)$, two vertices $v, w \in V$ have the same \emph{type}
if $N(v) \setminus \{w\} = N(w) \setminus \{v\}$. A graph $G$ has \emph{neighborhood
diversity} at most $t$ if $V(G)$ can be partitioned into $t$ sets $V_1, V_2, \dots, 
V_t$, such that all the vertices in each $V_i$, $1 \leq i \leq t$ have the 
same type.
The partition $\{V_1, V_2, \dots, V_t\}$ is called the \emph{type partition} of $G$.
\end{definition}
It can be inferred from the above definition that all vertices in a $V_i$ 
either form a clique or an independent set, $1\leq i\leq t$. For two types
$V_i, V_j$, either each vertex in $V_i$ is neighbor to each vertex in $V_j$,
or no vertex in $V_i$ is neighbor to any vertex in $V_j$. This leads to the 
definition of the \emph{type graph} $H= (\{1, 2, \dots, t\}, E_H)$, where
$E_H = \{\{i, j\} : 1\leq i < j \leq t, \mbox{ each vertex in } V_i \mbox{ is a neighbor of each vertex in }V_j \}$.

In the above, $cl(G)$ and $ind(G)$ respectively denote the number of $V_i$'s that form a clique and independent set in the type partition $\{V_1, V_2, \dots, V_t\}$. 
Gargano and Rescigno~\cite{gargano2015} 
showed that the CFON and CFCN variants are fixed parameter tractable with respect to 
neighborhood diversity.
They also obtained the bounds
$\chi_{ON}(G)\leq \chi_{ON}(H) + cl(G) +1$ and 
$\chi_{CN}(G)\leq \chi_{CN}(H) + ind(G) +1$. 
We improve both these bounds.
\begin{theorem}\label{thm:ndopen}
$\chicf\leq \chi_{ON}(H) + \frac{cl(G)}{2} +2$. 
\end{theorem}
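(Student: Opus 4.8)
The plan is to begin from a CFON coloring of the type graph $H$ and extend it to $G$, paying only a few extra colors to absorb the interactions created by large types. Fix a CFON coloring $c_H$ of $H$ using $\chi_{ON}(H)$ colors; for each type $i$ set $a_i := c_H(i)$ and fix a witness $\phi(i)\in N_H(i)$, meaning $a_{\phi(i)}$ occurs exactly once among $\{a_j : j\in N_H(i)\}$. In $G$, designate in each type $V_i$ one representative $r_i$ and put $C(r_i)=a_i$. The purpose of this is that for every vertex $u\in V_j$ the whole of $V_{\phi(j)}$ lies in $N(u)$, yet only $r_{\phi(j)}$ among those vertices carries $a_{\phi(j)}$; together with the $H$-uniqueness of $a_{\phi(j)}$ among the base colors of $H$-neighbors of $j$, this makes $a_{\phi(j)}$ a candidate unique neighbor-color for $u$. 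Everything else to be colored must be colored so as not to spoil these witness colors while still giving each vertex a unique neighbor.

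For a type $V_i$ that is an independent set, every non-representative vertex has the same open neighborhood as $r_i$, so it inherits $r_i$'s unique neighbor for free; I would color all of them with a single auxiliary \emph{dump} color $\delta$ taken outside the palette of $c_H$, so $\delta$ is never anyone's witness color and never interferes. The genuine cost lies in clique types with at least two vertices, since inside such a type the vertices see one another. For such a $V_i$, keep $C(r_i)=a_i$, pick a second representative $r_i'$, color all other vertices of $V_i$ with the dump color, and give $r_i'$ a \emph{secondary} color. The key economy is that one secondary color may be shared by two clique types $V_a,V_b$ provided they are non-adjacent in $H$: then $r_a$ does not see $r_b'$ (and symmetrically), so the shared color occurs exactly once in the neighborhood of each of $r_a,r_a',r_b,r_b'$. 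Hence I would pair up the big clique types and spend $\lceil cl(G)/2\rceil$ secondary colors; the at most one leftover type and any pair forced to be $H$-adjacent get absorbed by one further auxiliary color, keeping the overhead over $\chi_{ON}(H)$ at $cl(G)/2+2$.

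The remaining work is to verify the two invariants. For a witness check one shows that, for $u\in V_j$, the color $a_{\phi(j)}$ sits on $r_{\phi(j)}$ and nowhere else in $N(u)$: the other representatives $r_k$ with $k\in N_H(j)$ have $a_k\neq a_{\phi(j)}$ by choice of $\phi$; the non-representatives of $H$-neighbors of $j$, and of $V_j$ itself, carry only dump or secondary colors, which lie outside the palette of $c_H$; and one must treat the degenerate case $a_j=a_{\phi(j)}$ (where $r_j$, adjacent to $u$ when $V_j$ is a clique, also carries $a_{\phi(j)}$) by letting $u$ fall back on the secondary color of $V_j$. The self-checks are symmetric: a dump-colored vertex of a clique type $V_i$ uses $a_i$ (seen only on $r_i$) or $C(r_i')$; $r_i$ uses $C(r_i')$; and $r_i'$ uses $a_i$, with the same $a_i=a_{\phi(i)}$ corner case needing the spare color. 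Finally one reconciles the count $\chi_{ON}(H)+\lceil cl(G)/2\rceil+O(1)$ with the claimed bound. I expect the main obstacle to be exactly this clique-type bookkeeping: making a single shared color serve simultaneously as a private unique neighbor for both representatives of a pair without breaking the witness colors of outside vertices, and doing so with only two auxiliary colors overall rather than one per clique type — in particular dealing with pairs of clique types adjacent in $H$ and with types whose base color clashes with that of their witness.
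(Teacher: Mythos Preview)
Your plan contains a genuine gap at exactly the step that is supposed to produce the factor $\tfrac{1}{2}$. You correctly observe that two clique types $V_a,V_b$ can share a secondary color provided they are \emph{non-adjacent} in $H$, and from this you jump to ``pair up the big clique types and spend $\lceil cl(G)/2\rceil$ secondary colors; the at most one leftover type and any pair forced to be $H$-adjacent get absorbed by one further auxiliary color.'' But nothing guarantees such a pairing exists. If the clique types induce a complete subgraph of $H$ (which is perfectly possible), then no two of them are non-adjacent, so by your own analysis each secondary color can be used on only one $r_i'$, and you need $cl(G)$ secondary colors, not $cl(G)/2$. The phrase ``any pair forced to be $H$-adjacent'' hides that \emph{all} pairs may be forced, not just one.

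There is a second, smaller hole: your secondary representative $r_i'$ itself needs a unique color in its open neighborhood. You propose $a_i$, but in the very case you are trying to repair ($a_i=a_{\phi(i)}$) the color $a_i$ sits on both $r_i$ and $r_{\phi(i)}$ in $N(r_i')$, so it is not unique there; ``needing the spare color'' does not explain how one global spare handles every such $r_i'$ simultaneously.

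The paper's argument sidesteps both problems by working from \emph{outside} the bad clique types rather than from inside. It first notes that only clique types with $U_H(i)=C_H(i)$ are problematic (your ``degenerate case''). Then, while some type $V_j$ (bad or not) is adjacent in $H$ to at least two bad sets, it recolors the single representative $r_j$ with a fresh color; that one vertex now serves as the uniquely colored neighbor for every vertex in every bad set adjacent to $V_j$, so each fresh color eliminates at least two bad sets regardless of how the clique types sit relative to one another in $H$. Once no such $V_j$ remains, every surviving bad set has at most one bad neighbor, and a single extra color $s{+}1$ finishes them all at once. No pairing hypothesis is needed, and since no second representative $r_i'$ is ever introduced, the self-witness problem for $r_i'$ never arises.
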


\begin{theorem}[$\star$]\label{thm:ndclosed}
$\chi_{CN} (G)\leq \chi_{CN}(H) + \frac{ind(G)}{3} +3$. 
\end{theorem}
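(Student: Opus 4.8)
The plan is to mimic the structure of the CFON proof for neighborhood diversity, but exploiting the extra freedom that closed neighborhoods provide: a vertex may serve as its own uniquely colored neighbor. First I would take the type partition $\{V_1,\dots,V_t\}$ of $G$ and a CFCN coloring of the type graph $H$ using $\chi_{CN}(H)$ colors; I assign to every vertex of $V_i$ the $H$-color of the node $i$. This handles almost all vertices whose ``type-class'' has a uniquely colored node in the closed neighborhood in $H$ — when node $i$ sees a node $j$ with a color unique in $N_H[i]$, and $V_j$ is a singleton (or more generally, $V_j$ contributes exactly one vertex of that color in $N[v]$), any $v\in V_i$ inherits that uniquely colored neighbor. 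The classes that cause trouble are the independent-set classes $V_i$ with $|V_i|\ge 2$: here a vertex $v\in V_i$ does not see any other vertex of $V_i$, so it must rely on vertices in adjacent classes, and the clean ``inherit from $H$'' argument can fail when the witnessing class is large.

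The key idea for saving colors is to group the independent-set classes into blocks of three and spend one fresh recoloring color per block rather than one per class (this is where the $ind(G)/3$ comes from, versus $ind(G)$ in \cite{gargano2015}; the additive $+3$ absorbs small leftover cases and the block-of-three bookkeeping). Concretely, I would enumerate the independent classes $V_{i_1},V_{i_2},\dots$ with $|V_{i_\ell}|\ge 2$ and process them three at a time. Within a triple, I use the three new colors $c_1,c_2,c_3$ cyclically: recolor one carefully chosen vertex in each class so that it becomes the unique witness for the whole class. Because the three classes in a block can themselves be made mutually adjacent or adjacent to a common clique class in a controlled way, a single palette of three new colors suffices to give each of the three classes a private uniquely-colored witness, and crucially these new colors, used only on a bounded number of vertices, do not destroy the witnesses of previously handled vertices. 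Clique classes are easier: a clique class of size $\ge 2$ already provides internal conflict-free structure in the closed-neighborhood setting, and can be patched locally.

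The main obstacle I anticipate is exactly the interference bookkeeping: when I recolor a vertex $w$ in an independent class to make it a witness for its class, I must ensure $w$ does not simultaneously appear as a (now duplicated) color in the closed neighborhood of some vertex that was already relying on the old color of $w$, and I must ensure the at most three recolored vertices per block do not collide with the $\chi_{CN}(H)$-color witnesses inherited by vertices outside the block. Handling this cleanly is what forces the processing to be done in a fixed order (independent classes first in blocks of three, then clique classes, then a final sweep), and what makes the additive constant $3$ rather than something smaller. As in Theorem \ref{thm:ndopen}, a last subtle point is the degenerate configurations — very few classes, or all classes singletons — which I would dispose of directly at the start, since then $\chi_{CN}(G)\le\chi_{CN}(H)+O(1)$ trivially. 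Because of the length of this case analysis, the full proof is deferred to the appendix (the statement is marked $\star$).
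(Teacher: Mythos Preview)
Your plan has the right flavor---reuse the CFCN coloring of $H$, identify ``bad'' independent classes, and amortize the repair cost over groups of three---but the concrete mechanisms you propose would not work, and they differ from the paper's in essential ways.

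First, the initial coloring. You propose assigning the $H$-color of node $i$ to \emph{every} vertex of $V_i$. Under that scheme, any witness inherited from $H$ fails as soon as the witnessing class has size at least two, so essentially every non-singleton class becomes problematic, not just the independent ones. The paper instead colors only one representative $r_i\in V_i$ with $C_H(i)$ and gives color $0$ to everyone else. With that choice, the only classes that remain unfixed are the independent $V_i$ with $C_H(i)=U_H(i)$ (the representative was supposed to witness for itself, but the other members of $V_i$ do not see $r_i$); clique classes are automatically fine because every member sees $r_i$.

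Second, and more seriously, your ``enumerate the independent classes and process them three at a time'' cannot work as stated. If $V_i$ is an independent class and you recolor a single vertex $w\in V_i$, then $w$ is \emph{not} visible to any other vertex of $V_i$, so $w$ cannot be ``the unique witness for the whole class.'' Moreover, three arbitrarily enumerated independent classes need not be mutually adjacent or share any common neighbor, so there is no way to make one fresh color serve all three. The paper's reduction is structurally different: it looks for a bad set $V_i$ that currently has at least two bad \emph{neighboring} sets, declares $V_i$ the ``lead set,'' and recolors one vertex $v_i\in V_i$ with a fresh color $\chi_{CN}(H)+\ell$ while recoloring all remaining non-representatives of $V_i$ with $\chi_{CN}(H)+\ell+1$. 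Because $V_i$ is independent, each vertex of $V_i$ now has its own color uniquely in $N[\cdot]$; and the single vertex $v_i$ serves as the unique witness for every vertex in every neighboring bad set. Thus one iteration fixes the lead set \emph{and} at least two neighbors---at least three bad sets per new color---and the adjacency hypothesis is what makes this possible. After no such lead set exists, the residual bad sets form a matching and are dispatched with two further colors $s+1,s+2$; together with the color $0$ this accounts for the additive $+3$.
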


We prove Theorem \ref{thm:ndopen} below. The proof of Theorem 
\ref{thm:ndclosed} uses similar ideas and is presented in Appendix \ref{sec:ndproofs}.

\begin{proof}[Proof of Theorem \ref{thm:ndopen}]
To begin with, we {\cf} color the type graph $H$ using 
$\chi_{ON}(H)$ colors. Let $C_H: V_H \rightarrow[\chi_{ON}(H)]$ be that coloring and  $U_H: V_H \rightarrow[\chi_{ON}(H)]$ be the corresponding
assignment of unique colors.
Now, we derive a coloring $C: V(G) \rightarrow \{0, 1, 2, \dots, s, s+1\}$ from $C_H$, where 
$s=\chi_{ON} (H) + \frac{cli(G)}{2}$. 
Also we identify a unique color in the neighborhood of each vertex, denoted
$U:V(G)\rightarrow \{0, 1, 2, \dots, s, s+1\}$.
Let $V_1,V_2, \dots , V_t$ be the type partition of $V$. We assign colors to the vertices as follows: For each $V_i$, choose a representative vertex $r_i \in V_i$ and assign $C(r_i)=C_H(i)$. For each $V_i$, for all vertices $x \in V_i\setminus \{r_i\}$, 
we assign $C(x) = 0$. We make the below observations.
\begin{itemize}
    \item 
    Each of the vertices $r_i$ has a uniquely colored neighbor, as $C_H$ is a {\cf} coloring of $H$. 
    \item Let $V_i$ be an independent set, 
    and let $r_j$ be the uniquely colored neighbor of $r_i$. For each $x\in V_i$, $r_j$ serves as the uniquely colored neighbor. 
    
    \item If $V_i$ is a clique and $U_H(i)\neq C_H(i)$, 
    the uniquely colored neighbor of $r_i$ serves as the uniquely colored neighbor for all 
    vertices in $V_i$.
\end{itemize}

What remains to be handled are the type sets $V_i$ which are cliques and $U_H(i)=C_H(i)$. 
We call these type sets as \emph{bad sets}. 
We do not consider the singleton $V_i$'s as bad sets.
All the representative vertices $r_i$ see a uniquely colored neighbor, regardless of whether $V_i$ is bad or not.  
Note that, once a bad set $V$ is fixed, we no longer call it a bad set. 

Let $A$ refer to the following set of colors: $A = \{\chi_{ON}(H) + 1,\chi_{ON}(H) + 2, \dots, \chi_{ON}(H) + cli(G)/2\}$. None of the colors from $A$ have been used till now.

\noindent \textbf{Reduction of bad sets:} If there exists a $V_i$ (not necessarily a bad set) 
that has at least 2 bad sets as neighbors, we do the following. 
Let $V_{i_1},V_{i_2}, \dots, V_{i_m}$ be the bad sets adjacent to $V_i$. Then we reassign
$C(r_i) = c$, where $c \in A$ is a color that has not been used till now. 
The vertex $r_i$ will serve as the uniquely colored neighbor for all vertices in 
$V_{i_1}, V_{i_2}, \dots, V_{i_m}$ as well as the vertices in $V_i \setminus \{r_i\}$.
Thus after this operation, none of $V_{i_1},V_{i_2}, \dots, V_{i_m}$ and $V_i$ are bad sets.

We apply the above reduction operation as much as possible, choosing a new color
from $A$ each time. After that, each set $V_i$ is adjacent to at most one bad set. 
This leaves us with the following two cases. 
\begin{itemize}
    \item \textbf{Case 1: Bad sets $V_i$ and $V_j$ which are neighbors, each of which is not 
    neighbors to any other bad sets.}
    
    Reassign $C(r_i)=s+1$.
    The uniquely colored neighbor of $r_i$ remains the same. And $r_i$ becomes the uniquely colored neighbor for all vertices $x\in V_i\cup V_j \setminus \{r_i\}$. 
    
    Note that any set $V_k$ that relied on $V_i$ for its unique color,
    can continue to do so. This is because $V_k$ sees at most one bad set 
    after the repeated application of the reduction operation.
    
    \item \textbf{Case 2: Bad set $V_i$, which has no neighboring bad set.} 
    
    Let $V_j$ be the neighboring set of $V_i$  such that $C(r_i)=C(r_j)$. 
    We reassign     $C(r_i)=s+1$. 

Every vertex in $V_i$ has $r_j$ as its uniquely colored neighbor. As in the previous case, any set that
relied on $V_i$ for its unique color can continue to do so. 
\end{itemize}
The above is a \cf{} coloring. We use $\chi_{ON}(H)$ colors to color the representative vertices of each $V_i$. Each application of the reduction operation needs
one new color  from $A$ to handle at least two bad sets. Since each bad set is a 
clique, the number of extra colors needed is at most $cl(G)/2$. Taking the colors
$\{0, s+1\}$ into account, the total number of colors used is $\chi_{ON}(H) + 
cl(G)/2 + 2$. 
\qed
\end{proof}

\subsection{Distance to Cluster}\label{sec:dc}

\begin{definition}[Distance to Cluster]
Let $G= (V,E)$ be a graph. The distance to cluster of $G$, 
denoted ${\sf dc}(G)$, is the size of the smallest set $X\subseteq V$  
such that $G[V\setminus X]$ is a disjoint union of cliques. 
\end{definition}

Reddy \cite{vinod2017}, studied the CFCN and the CFON varaints 
with respect to 
the distance to cluster parameter, 
${\sf dc}(G)$.
They showed that 
$\chi_{ON}(G)\leq 2{\sf dc(G)} +3$ and 
$\chi_{CN}(G)\leq {\sf dc}(G)+2$. 
We give the 
following improved bounds. 

\begin{theorem}\label{thm:dist2clust}
$\chicf \leq {\sf dc}(G) + 3$. 
\end{theorem}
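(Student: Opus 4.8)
The plan is to mimic the structure of the feedback vertex set proof (Theorem~\ref{thm:fvs}), replacing the collection of trees by a collection of cliques. Let $X$ be a smallest set with $G[V\setminus X]$ a disjoint union of cliques $Q_1, Q_2, \dots$, so $|X| = {\sf dc}(G)$. We will produce a coloring $C: V(G) \rightarrow [{\sf dc}(G)+3]$. First I would color the cliques: within each clique $Q_j$ with $|V(Q_j)| \geq 2$, pick one vertex and color it with color ${\sf dc}(G)+1$ and all remaining vertices with color ${\sf dc}(G)+2$; then every vertex of $Q_j$ has a uniquely colored neighbor inside $Q_j$ (the lone vertex colored ${\sf dc}(G)+1$, or if $v$ is that vertex and $|V(Q_j)| = 2$ the other vertex). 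Singleton cliques are handled separately and temporarily left to lean on a neighbor in $X$. As in the FVS proof, I would maintain a \emph{free color} $c'$ — a color that can be safely assigned to still-uncolored vertices of $X$ without destroying any unique neighbor already fixed — initialized to $0$.

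Next I would color $X$. Write $F = X = \{v_1, \dots, v_{{\sf dc}(G)}\}$ and assign $C(v_i) = i$ as the default. The subtlety — and the main point where this proof diverges from the FVS case — is that $G[X]$ need not be an independent set, so the vertices $v_i$ need to find unique neighbors too; but here we have the ${\sf dc}(G)+3$rd color in reserve for exactly such repairs, and we also have the cliques available as neighbors. I would then proceed in cases analogous to Cases 1--5 of the FVS proof: (Case 1) a singleton clique $\{w\}$ with an uncolored $X$-neighbor $v_{i_1}$ — set $C(w) = C(v_{i_1}) = i_1$, push other uncolored neighbors onto a shared color, set $c' = i_1$; (Case 2) a singleton clique whose neighborhood is fully colored but has no unique neighbor — every color in $N(w)$ repeats, so recolor some $v_{i_1}$ to make $i_1$ unique in $N(w)$, set $C(w)$ to a high color, set $c' = i_1$; (Cases 3--4) handle remaining uncolored $v_i$ by borrowing a "deep" vertex of an adjacent clique and recoloring it to $i$; (Case 5) the leftover case where every $v_i$ only sees a clique through a small structured intersection, use the free color $c'$. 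The deep-neighbor machinery transfers, but is actually simpler: inside a clique all vertices are mutually adjacent, so "which clique vertex do I recolor" is less constrained than in a tree.

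The \textbf{main obstacle} I anticipate is the exhaustiveness argument (the analogue of the Case~5 contradiction): one must show that if none of the earlier cases apply and $c' = 0$, then $G$ is disconnected. One needs to carefully enumerate what "no case applies" forces — no singleton cliques, $G[X]$ contributing no new unique colors in a certain sense, each $v_i$ intersecting each clique only in a controlled way, and each clique's distinguished vertices having at most one $X$-neighbor — and then argue that two vertices of $X$ (using ${\sf dc}(G) \geq 2$, after disposing of the ${\sf dc}(G) \leq 1$ base cases separately as in Lemma~\ref{lem:whenfeq1}) cannot be connected by a path. A secondary technical point is bookkeeping the color budget: we use colors $\{1, \dots, {\sf dc}(G)\}$ for $X$, two colors for the clique interiors, and one extra repair color, total ${\sf dc}(G)+3$ — I would double-check that the Case~2 and Case~3/4 recolorings never simultaneously demand two distinct fresh colors. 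Finally, one should verify the claimed near-tightness by exhibiting graphs with $\chicf = {\sf dc}(G)$, e.g. a suitable blow-up or subdivision construction, though that is separate from the theorem proper.
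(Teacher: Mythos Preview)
Your plan could be made to work, but it takes a more circuitous route than the paper, and the part you flag as the main obstacle turns out to be unnecessary.

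Two remarks. First, you misdiagnose a difficulty: non-independence of $G[X]$ is a \emph{help}, not a complication, and the FVS proof already handles it identically. Setting $Y=\{v_i\in X:\deg_X(v_i)\geq 1\}$ and giving each $v_i\in Y$ its own color $i$ means every vertex of $Y$ automatically has a uniquely colored $X$-neighbor; only the vertices of $X\setminus Y$ (isolated in $G[X]$) need further attention. Your sentence ``assign $C(v_i)=i$ as the default'' followed by cases that speak of ``uncolored $X$-neighbors'' is internally inconsistent for exactly this reason.

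Second, and more importantly, the Case~5 disconnection argument evaporates for cliques. In the FVS proof that case arises because recoloring the special vertex $s_T$ would destroy $r_T$'s unique neighbor, so sometimes no safe recoloring inside a tree exists. In a clique, recoloring any single vertex to a fresh color $i$ never breaks the unique-neighbor property of its clique-mates (they simply switch their witness to the recolored vertex), so your Case~3/4 always succeeds and Case~5 is vacuous.

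The paper exploits this by reversing your order: it colors $X$ first and the non-singleton cliques \emph{last}. After coloring $Y$ and disposing of the singleton cliques, for each still-uncolored $v_i\in X\setminus Y$ lacking a unique neighbor it simply picks any $w\in N(v_i)$ in a clique and sets $C(w)=i$; the remaining $X$-vertices then all receive color $d{+}1$. Only at the end are the non-singleton cliques completed: each $K_j$ has $0$, $1$, or $\geq 2$ vertices already carrying distinct small colors, and the rest are filled in with $d{+}1$, $d{+}2$, $d{+}3$. No free-color bookkeeping and no connectivity contradiction are needed at all.
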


\begin{theorem}[$\star$]\label{thm:cfcnd+1}
$\chicn{}\leq \max \{3,{\sf dc}(G)+1\}$.
\end{theorem}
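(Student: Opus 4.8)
The plan is to mirror the structure of the proof of Theorem~\ref{thm:dist2clust}, adapting it to the closed-neighborhood setting where a vertex may serve as its own uniquely colored neighbor. Let $X = \{x_1, \dots, x_d\}$ be a minimum-size distance-to-cluster set, so that $G[V \setminus X]$ is a disjoint union of cliques $Q_1, Q_2, \dots$. First I would assign the $d$ vertices of $X$ the distinct colors $1, 2, \dots, d$; since these colors are used only on $X$, each $x_i$ is automatically conflict-free in its closed neighborhood (color $i$ appears exactly once at $x_i$ itself). The bulk of the work is to color the cliques using colors from $\{1, \dots, d\} \cup \{d+1\}$ (and at most two more ``helper'' colors, to reach the $\max\{3, {\sf dc}(G)+1\}$ bound), so that every clique vertex also gets a uniquely colored closed neighbor. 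Note the $\max$ with $3$ handles the degenerate cases where ${\sf dc}(G) \le 2$; one should treat ${\sf dc}(G)=0$ (disjoint union of cliques) separately, since a clique $K_m$ has $\chi_{CN} \le 3$ for $m \ge 2$ and $\chi_{CN}=1$ for $m=1$.

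The key step is handling each clique $Q$. For a clique $Q$ with $|Q| \ge 2$: if we color all of $Q$ with a single color $c$ not used anywhere in $N[Q]$ outside... — more precisely, I would try to give $Q$ one color $c$ such that exactly one vertex of $Q$ plus possibly its external neighbors witnesses uniqueness. A cleaner approach: pick one representative $r_Q \in Q$, color it with color $d+1$, and color the rest of $Q$ with a color $c \in \{1,\dots,d\}$ chosen so that no vertex of $X$ adjacent to all of $Q$ has color $c$ — then every non-representative vertex of $Q$ has $r_Q$ (color $d+1$) as a uniquely colored closed neighbor provided at most one vertex of $Q\cup(N(Q)\cap X)$ carries color $d+1$, and $r_Q$ itself needs a unique color, which can be arranged via the external structure. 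Singleton cliques $Q = \{w\}$ are the delicate case: $w$ needs some color appearing exactly once in $N[w]$; since $N(w) \subseteq X$, if $w$ sees a vertex $x_i \in X$ whose color $i$ is not repeated in $N[w]$ we are fine, otherwise $w$ must grab a fresh color — this is where the extra colors (beyond $d+1$) are spent, and an averaging/reduction argument (as in the ${\sf dc}(G)+3$ proof and the neighborhood-diversity proof) shows a bounded number suffice.

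I would organize the argument into cases exactly paralleling Theorem~\ref{thm:dist2clust}: cliques interacting with uncolored/problematic $X$-vertices in specific ways, a reduction operation that fixes several problematic cliques at once using a single new color, and a final case analysis showing the cases are exhaustive (else $G$ is disconnected, a contradiction). The main obstacle I anticipate is the bookkeeping for the representative vertices $r_Q$: ensuring each $r_Q$ has a uniquely colored closed neighbor while simultaneously ensuring it can be \emph{used} as the unique closed neighbor for the rest of $Q$ and possibly for adjacent $X$-vertices, all without color collisions — this is the same tension resolved in the open-neighborhood proof, but the closed neighborhood both helps (a vertex can be its own witness) and complicates (recoloring $r_Q$ can destroy its self-witness). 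I expect the closed-neighborhood slack to let us shave the additive constant from $3$ down to $1$, matching the claimed $\max\{3, {\sf dc}(G)+1\}$, with the $\max$ absorbing small cases and the lower-bound side (tightness) following from small examples such as $K_3$ or a star, for which $\chi_{CN} = 3$ while ${\sf dc} \le 1$.
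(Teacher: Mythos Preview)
Your opening move contains the central gap. You assign $x_i$ the color $i$ and claim each $x_i$ is automatically conflict-free ``since these colors are used only on $X$''; two lines later you propose coloring the non-representative vertices of each clique with some $c \in \{1,\dots,d\}$. Once a clique vertex adjacent to $x_i$ receives color $i$, the self-witness for $x_i$ is destroyed, and nothing in your sketch supplies a replacement. The obstacle you flag --- bookkeeping for the representatives $r_Q$ --- is in fact not an obstacle at all: with $C(r_Q)=d+1$ and $X$ colored from $[d]$, both $r_Q$ and every other vertex of $Q$ see color $d+1$ exactly once in their closed neighborhoods. The real difficulty lies entirely on the $X$ side, and you have not addressed it. The ``reduction operation'' you invoke is borrowed from the neighborhood-diversity argument and does not transplant here, and the five-step structure of Theorem~\ref{thm:dist2clust} is not what the paper follows for the closed variant.

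The paper's proof is a short three-case split whose key device is to \emph{free up} one color in $[d]$ rather than spend all $d$ of them on $X$. If $G[X]$ contains an edge $\{v_i,v_j\}$, set $C(v_i)=i$, $C(v_j)=d+1$, and give every other $v_k\in X$ color $k$; the color $j$ is now unused on $X$, so it can be placed exactly once in each clique as the unique witness there, with all remaining clique vertices receiving $d+1$. If instead some clique vertex $u$ has at least two neighbors $v_{i_1},v_{i_2},\dots$ in the isolated part $X\setminus Y$ of $X$, set $C(u)=i_1$, give those neighbors the color $d+1$, and again $i_2$ is saved for the same one-per-clique role. If neither case applies, $X$ is independent and every clique vertex has at most one $X$-neighbor; then three colors suffice outright (one distinguished vertex per clique gets $1$, the rest of the clique $2$, all of $X$ gets $3$), and this is where the $\max\{3,\cdot\}$ enters. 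The saved-color trick is precisely the idea your plan is missing.
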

For the subdivided clique
$K^*_n$, we have $\chi_{ON}(K^*_n) = {\sf dc}(K^*_n) = n$.
Hence Theorem \ref{thm:dist2clust} is nearly tight.
We prove Theorem \ref{thm:dist2clust} below. The proof of Theorem 
\ref{thm:cfcnd+1} uses similar ideas and is presented in Appendix 
\ref{sec:dcproofs}.

\begin{proof}[Proof of Theorem \ref{thm:dist2clust}]
Let ${\sf dc}(G) = d$. That is, there is a set $X \subseteq V(G)$, with 
$|X| = d$ such that
$G[V \backslash X]$ is a disjoint union of cliques.

If $X=\emptyset$, the graph $G$ is a clique because we only 
consider connected graphs. 
A clique can be \cf{} colored using 3 colors.
Else, we have $|X| \geq 1$. Let $X = \{v_1, v_2, \ldots, v_d\}$.
Then $G[V\backslash X] = K_1 \cup K_2 \dots \cup K_t$ is a disjoint union of cliques.

Below, we explain how to assign colors, $C: V(G) \rightarrow [d+3]$ such that
every vertex has a uniquely colored neighbor.
We apply the following rules:
    \begin{enumerate}
        \item Let $Y = \{v_i \in X : \mbox{deg}_X 
        (v_i)\geq 1\}$. For all $v_i \in Y$, assign $C(v_i)=i$. 
        
        Now every vertex in $Y$ is colored and has a uniquely colored neighbor. 
        
        \item For each of the singleton cliques $K_j=\{w\}$, we do the following. 
        \begin{itemize}

        \item \textbf{Case 2(a): The vertex $w$ has at least 1 uncolored neighbor}. 
 
        Let $v_{i_1}, v_{i_2}, \dots , v_{i_m} \in X$ be the uncolored neighbors of $w$, with $m\geq 1$. 
        Assign $C(v_{i_1})=C(w)=i_1$ and $C(v_{i_\ell})=d+1$, for all $2\leq \ell \leq m$. 
         All the vertices in $N(w) \cup \{w\}$ see the color $i_1$ 
         exactly once in their neighborhood. We will not be assigning
         the color $i_1$ for any other vertices henceforth.
         
        \item \textbf{Case 2(b): All vertices in $N(w)$ are colored}.
        
        The assignment of colors in the previous case may lead us to this 
        case. If $w$ already sees a uniquely colored neighbor, then we 
        set $C(w) = d+1$.
        
        If $w$ has no uniquely colored neighbor, we choose 
        two vertices $v_{i_1}, v_{i_2} \in N(w)$ such that $C(v_{i_1})=C(v_{i_2})$. 
        Since the only color that is being reused in $X$ is $d+1$, 
        we have $C(v_{i_1})=C(v_{i_2}) = d+1$. Reassign $C(v_{i_1})=i_1$.
        Assign $C(w)=d+1$. Here the color $i_1$ will be the unique color in the neighborhood of $w$ and this color will not be used in further coloring. 
        \end{itemize}
        
        After this step, all the singleton cliques $K_j$ and their neighboring vertices are colored and also have a uniquely colored neighbor.
        \item For each uncolored $v_i \in X\setminus Y$ that does not have a uniquely colored neighbor, 
        we choose a vertex $w\in N(v_i)$ and assign $C(w)=i$. The color $i$ is the unique color in $v_i$'s neighborhood. And the color $i$ is not used in further coloring. 
        \item For all the remaining uncolored $v_i \in X\setminus Y$, 
        assign  $C(v_i) = d+1$. 
        Recall that 
        $v_i$ is not colored in Step 3 because it has a uniquely colored neighbor. 

        Now, all the vertices in $X$ are colored and have uniquely colored neighbors. 
        What remains to be colored are the cliques of size at least 2. 
         
     \item For each clique $K_j$ with $|K_j| \geq 2$, we note that there may already be some colored vertices in $K_j$ 
    as a result of Step 3. These colors appear exactly once in the graph.
    We do the following:
    \begin{itemize}
         \item If $K_j$ has at least 2 colored vertices, color the remaining vertices (if any) with $d+1$. 
          \item Else, if $K_j$ has exactly 1 colored vertex, choose an uncolored vertex and assign $d+2$. 
          Color the remaining vertices (if any) with $d+1$. 
          \item Else, choose 2 vertices from $K_j$ and assign the colors $d+2$ and $d+3$. 
          Color the remaining vertices (if any) with $d+1$. 
     \end{itemize}
     \end{enumerate}
\qed
\end{proof}

\section{\partialcf{} Coloring of Planar Graphs}\label{sec:planar}
\begin{definition}[Planar and Outerplanar graphs]
A \emph{planar graph} is a graph that can be drawn in $\mathbb R^2$ (a plane) such 
that the edges do not cross each other in the drawing.
An \emph{outerplanar graph} is a planar graph that has a drawing in a plane such 
that all the vertices of the graph belong to the outer face. 
\end{definition}

Abel et. al. showed \cite{planar} that eight colors are sufficient for 
\partialcf{} coloring of a planar graph. 
In this section, we improve the bound to five colors. 


We need the following definition:
\begin{definition}[Maximal Distance-3 Set]
For a graph $G= (V,E)$, a \emph{maximal distance-3 set} is a set $S \subseteq V(G)$
that satisfies the following:
\begin{enumerate}
    \item For every pair of vertices $w, w' \in S$, we have $dist(w,w') \geq 3$.
    \item For every vertex $w \in S$,  $\exists w' \in S$ such that
            $dist(w, w') = 3$.
    \item For every vertex $x \notin S$, $\exists x' \in S$ such 
            that $dist(x, x') < 3$.
\end{enumerate}
\end{definition}
The set $S$ is constructed by initializing $S=\{v\}$ where $v$ is 
an arbitrary vertex. We proceed in iterations. In each iteration, we add 
a vertex $w$ to $S$ if (1) for every $v$ already in $S$, $dist(v,w) \geq 3$,
and (2) there exists a vertex $w'\in S$ such that $dist(w,w') = 3$.
We repeat this 
until no more vertices can be added. 


The main component of the proof is the construction of an auxiliary graph 
$G'$ from the given graph $G$. 

\vspace{1mm}
\noindent\textbf{Construction of $G'$:} 
The first step is to pick a maximal distance-3 set $V_0$.
Notice that any distance-3 set is an independent set by definition.
We let $V_1$ denote the neighborhood of $V_0$. More formally,
$V_1 = \{ w :  \{w, w'\} \in E(G), w' \in V_0\}$.
Let $V_2$ denote the remaining vertices i.e., 
$V_2=V\setminus (V_0 \cup V_1)$. 

We note the following properties satisfied by the above partitioning of $V(G)$.
\begin{enumerate}
    \item The set $V_0$ is an independent set.
    \item For every vertex $w \in V_1$, there exists a unique vertex $w' \in V_0$ such that $\{w, w'\} \in E(G)$. This is because if there are two such vertices, this will violate
    the distance-3 property of $V_0$.
    \item Every vertex in $V_0$ has a neighbor in $V_1$. If there exists $v \in V_0$
            without a neighbor in $V_1$, then $v$ is an isolated vertex. By assumption,
            $G$ does not have isolated vertices.
    \item There are no edges from $V_0$ to $V_2$.
    \item Every vertex in $V_2$ has a neighbor in $V_1$, and is hence at distance 2 from some vertex in $V_0$. This is due to the maximality of the distance-3 set $V_0$. 
\end{enumerate}

Now we define $A=V_0\cup V_2$. 
We first remove all the edges of $G[V_2]$ making $A$ an independent set.
For every vertex $v\in A$ we do the following: we identify an arbitrary neighbor
$f(v) \in N(v) \subseteq V_1$. Then we contract the edge $\{v, f(v)\}$.
That is, we first identify vertex $v$ with $f(v)$. 
Then for every edge $\{v, v'\}$, we add an edge $\{f(v),v'\}$.
The resulting graph is $G'$. 


\begin{theorem}\label{graphon}
If $G$ is a planar graph, $\chi^*_{ON}(G) \leq 5$.
\end{theorem}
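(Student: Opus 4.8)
The plan is to exploit the partition $V(G) = V_0 \cup V_1 \cup V_2$ together with the auxiliary graph $G'$ constructed above. First I would argue that $G'$ is a simple planar graph: it is obtained from the planar graph $G$ by deleting the edges of $G[V_2]$ and then contracting the edges $\{v,f(v)\}$ for $v \in A = V_0 \cup V_2$. Since $A$ is an independent set, the contracted edges form a star forest (with centres in $V_1$), and both edge deletion and contraction of a forest preserve planarity; moreover contracting $\{v,f(v)\}$ creates no loop because $G$ is simple, and any parallel edges that arise may be suppressed. The vertex set of $G'$ is $V_1$, and $G[V_1]$ is a subgraph of $G'$. By the Four Colour Theorem, fix a proper colouring $c\colon V_1 \to \{1,2,3,4\}$ of $G'$.

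Next I would define the partial colouring of $G$: colour each $w \in V_1$ with $c(w)$, colour every vertex of $V_0$ with the new colour $5$, and leave every vertex of $V_2$ uncoloured; this uses $5$ colours. I then verify the conflict-free condition class by class. For $w \in V_1$: by property~2 of the partition, $w$ has a unique neighbour $g(w) \in V_0$, which is coloured $5$; no neighbour of $w$ in $V_1$ is coloured $5$ and its neighbours in $V_2$ are uncoloured, so $g(w)$ is a uniquely coloured neighbour of $w$. For $v \in V_0$: by properties~1 and~4 all neighbours of $v$ lie in $V_1$, and $f(v)\in V_1$ is one of them; contracting $\{v,f(v)\}$ made $f(v)$ adjacent in $G'$ to every neighbour of $v$ in $V_1$ other than $f(v)$, so since $c$ is proper on $G'$ no such vertex has colour $c(f(v))$, i.e. $f(v)$ is a uniquely coloured neighbour of $v$. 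For $v \in V_2$: by property~4 it has no neighbour in $V_0$ and its neighbours in $V_2$ are uncoloured, so its only coloured neighbours lie in $N(v)\cap V_1$; as before $f(v)$ is adjacent in $G'$ to every vertex of $(N(v)\cap V_1)\setminus\{f(v)\}$, so $f(v)$ is the unique neighbour of $v$ with colour $c(f(v))$. Hence $\chicfp \le 5$.

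The crux of the argument — and the step I expect to require the most care — is this last point: it is precisely the act of absorbing $v$ into $f(v)$ by contraction that converts the informal fact ``at most one neighbour of $v$ can get the colour of $f(v)$'' into a genuine consequence of the properness of a colouring of a single planar graph. Beyond that, the argument is routine, but one must still handle a couple of technicalities cleanly: checking that $G'$ is simple and that $c$ restricted to $V_1$ is proper on $G[V_1]$, and dealing with the degenerate case in which $G$ has diameter at most $2$, so that $V_0$ cannot be grown to a genuine distance-$3$ set. In that case the partition properties 1--5 still hold with $V_0 = \{v\}$ for an arbitrary vertex $v$ (every other vertex lies in $V_1$ or $V_2$), so the same construction applies; alternatively, such small-diameter graphs can be dispatched directly.
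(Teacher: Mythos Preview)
Your proposal is correct and follows essentially the same approach as the paper: build $G'$ from the $V_0\cup V_1\cup V_2$ partition by deleting the $V_2$-edges and contracting each $v\in A$ into $f(v)$, four-colour the planar graph $G'$, give $V_0$ a fifth colour, and verify the conflict-free condition exactly as you do. Your write-up is in fact more careful than the paper's on two points it glosses over (simplicity of $G'$ after contraction, and the degenerate diameter-$\le 2$ case where $V_0$ is forced to be a singleton), but these are refinements of the same argument rather than a different route.
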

\begin{proof}
Let $G$ be a planar graph. 
We first construct the graph $G'$ as above.
Since the steps for constructing $G'$ involve only edge deletion 
and edge contraction, $G'$ is also a planar graph.
By the planar four-color theorem \cite{4ct}, 
there is an assignment $C: V(G') \rightarrow \{2,3,4,5\}$
such that no two adjacent vertices of $G'$ are assigned the same color.
Now we have colored all the vertices in $V(G') = V_1$

Now, we extend $C$ to get a \partialcf{} 
coloring for $G$. 
For all vertices $v \in V_0$, we assign $C(v) = 1$.
The vertices in $V_2$ are not assigned a color.

We will show that $C$ is indeed a \partialcf{} 
coloring of $G$. 
Consider a vertex $v\in A$ which is contracted to a neighbor $f(v)=w\in V_1$. 
The color assigned to $w$ is distinct from all $w$'s neighbors
in $G'$. 
Hence the color assigned to $w$ is the unique color 
among the neighbors of $v$ in $G$.

For each vertex $w \in V_1$, $w$ is a neighbor of exactly one vertex $v\in V_0$.
Every vertex $v \in V_0$ is colored 1, which is different 
from all the colors assigned to the neighbors of $w$ in $G'$.
\qed
\end{proof}


Outerplanar graphs have a proper coloring using three colors. By 
argument analogous to Theorem \ref{graphon}, we infer the following.

\begin{corollary}\label{cor:outer}
If $G$ is an outerplanar graph, $\chi^*_{ON}(G) \leq 4$.
\end{corollary}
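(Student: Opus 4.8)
The plan is to follow the proof of Theorem~\ref{graphon} almost verbatim, replacing the appeal to the four-color theorem with the fact that every outerplanar graph has a proper $3$-coloring. First I would build the auxiliary graph $G'$ exactly as in the construction preceding Theorem~\ref{graphon}: pick a maximal distance-$3$ set $V_0$, set $V_1 = \{w : \{w,w'\} \in E(G),\, w' \in V_0\}$ and $V_2 = V \setminus (V_0 \cup V_1)$, delete all edges of $G[V_2]$, and then for each $v \in A = V_0 \cup V_2$ pick an arbitrary neighbor $f(v) \in N(v) \subseteq V_1$ and contract the edge $\{v, f(v)\}$. As before, $V(G') = V_1$, and properties (1)--(5) of the partition continue to hold since the construction is unchanged.

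The one new point is that $G'$ is outerplanar. This holds because $G'$ is produced from $G$ using only edge deletions and edge contractions, so $G'$ is a minor of $G$; and the class of outerplanar graphs is minor-closed --- equivalently, it is exactly the class of graphs with no $K_4$ or $K_{2,3}$ minor. Hence $G'$, being outerplanar, admits a proper coloring $C : V(G') \to \{2,3,4\}$.

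Next I would extend $C$ to a \partialcf{} coloring of $G$ in the same way as in Theorem~\ref{graphon}: set $C(v) = 1$ for every $v \in V_0$, and leave every vertex of $V_2$ uncolored. The verification is identical to that proof. For a vertex $v \in A$ that was contracted onto $w = f(v) \in V_1$, the color of $w$ differs from the colors of all neighbors of $w$ in $G'$, hence $C(w)$ occurs exactly once among the neighbors of $v$ in $G$. For a vertex $w \in V_1$, property (2) says $w$ has exactly one neighbor $v \in V_0$; that vertex is colored $1$, a color that appears on none of $w$'s neighbors in $G'$, so $1$ is a conflict-free color for $w$. Thus $C$ is a \partialcf{} coloring of $G$ using only the colors $\{1,2,3,4\}$, giving $\chi^*_{ON}(G) \leq 4$.

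The only step requiring anything beyond transcription is the observation that $G'$ stays outerplanar, and this is immediate from outerplanarity being closed under minors; the rest of the argument carries over unchanged from Theorem~\ref{graphon}.
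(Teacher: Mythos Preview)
Your proposal is correct and follows exactly the approach the paper intends: it reruns the argument of Theorem~\ref{graphon}, replacing the four-color theorem by the $3$-colorability of outerplanar graphs. The only extra detail you supply beyond the paper's one-line justification is the (correct) observation that $G'$ remains outerplanar because outerplanarity is minor-closed.
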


For outerplanar graphs, a \partialcf{} coloring using 4 colors implies
a \cf{} coloring using 5 colors. However, we can show the 
following improved bound. For the sake of clarity, we provide the full proof of
the below theorem in Appendix \ref{sec:outer} and give a sketch below.

\begin{theorem}[$\star$]\label{thm:4col}
If $G$ is an outerplanar graph, $\chi_{ON}(G) \leq 4$.
\end{theorem}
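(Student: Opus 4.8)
The plan is to exploit the structural decomposition of outerplanar graphs: every outerplanar graph has a weak dual that is a forest, and more conveniently, every $2$-connected outerplanar graph is a polygon triangulated by non-crossing chords, whose weak dual is a tree. I would first reduce to the connected case (color components independently) and then argue by induction on the block–cut tree. The base cases are small: a single edge, a single vertex with a pendant, and tiny cliques/paths, all of which are handled directly by Lemma~\ref{lem:tree2col} (trees need only $2$ colors) or by ad hoc coloring. For the inductive step I would root the block–cut tree at an arbitrary block and process blocks from the leaves inward, so that when a block $B$ is colored, at most one of its cut vertices (the one joining $B$ to its parent) has constraints imposed from outside; the rest of $B$'s interface with the already-processed part consists of subtrees or smaller outerplanar pieces hanging off $B$'s other cut vertices, which we get to color freely.

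The heart of the argument is coloring a single $2$-connected outerplanar block, i.e. a triangulated polygon, using $4$ colors so that (i) every vertex of the polygon has a uniquely colored neighbor \emph{within the block}, and (ii) one prescribed boundary vertex (the parent cut vertex) may be required to avoid one forbidden color or to already be colored. Since the weak dual of a triangulated polygon is a tree, I would do a second induction, this time on that dual tree: pick a leaf triangle, which shares an edge with the rest of the polygon, color the polygon-minus-that-ear recursively, and then place the single new "ear'' vertex. The new vertex has exactly two neighbors (the endpoints of the shared edge), so giving it a color distinct from those two is easy with $4$ colors, and it can simultaneously serve as a uniquely colored neighbor for one of those two endpoints if needed. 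One must be careful that introducing the ear vertex does not destroy the uniqueness certificate of an already-colored vertex — but since each previously colored vertex's certificate was a \emph{neighbor} in the smaller polygon, and the ear vertex is adjacent only to the two shared-edge endpoints, at most those two vertices' certificates are at risk, and there is enough color freedom (a palette of $4$ against degree-constrained local neighborhoods of a triangulated polygon, which are "fans'') to repair them.

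The main obstacle I expect is bookkeeping at the cut vertices: a cut vertex $c$ lies in several blocks, and its uniquely colored neighbor might have to come from one specific block while its own color must be consistent across all of them. I would handle this by always designating, for each cut vertex, one "responsible'' block that supplies its uniquely colored neighbor, and pushing the color of $c$ as a \emph{single} external constraint (one forbidden color, or an already-fixed color) into every other block containing $c$ — which is exactly the interface assumption the block-level lemma is set up to tolerate. A secondary subtlety is the genuinely small blocks (a block that is just an edge $K_2$): such a block contributes no internal uniquely colored neighbor for a degree-$1$-in-block vertex, so its non-cut endpoint must receive its certificate from the structure on the other side, which is why the outermost induction processes leaves first and why the pendant/path base cases of Lemma~\ref{lem:tree2col} are invoked. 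Once these interface conventions are fixed, each inductive step is a short case analysis, and $4$ colors suffice throughout because outerplanar graphs are $3$-degenerate with treewidth $2$, giving ample slack over the $2$ colors that suffice for trees.
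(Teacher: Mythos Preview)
Your block--cut tree reduction is right and matches the paper's outer induction: color one block first, then extend to adjacent blocks with a single precolored cut vertex. The gap is in the inner induction. You assert that ``every $2$-connected outerplanar graph is a polygon triangulated by non-crossing chords,'' and then base your argument on removing leaf \emph{triangles} of the weak dual, so that each step adds a \emph{single} ear vertex with exactly two neighbors. That structural claim is false: a $2$-connected outerplanar graph is a polygon with non-crossing chords, but the inner faces can have arbitrary size (e.g., a bare cycle $C_n$ has one inner face of size $n$ and no chords). You cannot salvage this by passing to a maximal outerplanar supergraph, because deleting the extra chords afterwards can destroy uniquely colored neighbors.

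When faces are not triangles, a leaf of the weak dual is a face sharing one edge $\{v_1,v_2\}$ with the rest, and removing it peels off a \emph{path} of new vertices, not one. The real work is therefore: given an already-colored edge $\{v_1,v_2\}$ (with both $C$ and $U$ values fixed), extend the coloring along an arbitrary-length path from $v_2$ back to $v_1$ while maintaining the invariants. This is exactly what the paper's Lemma~\ref{lem:path} does, and it is not the light bookkeeping you describe: the case $U(v_1)=U(v_2)$ forces a substantial case split (including looking ahead at whether adjacent faces are triangles, and sometimes coloring two faces simultaneously via Lemma~\ref{lem:CuCvsame}). Moreover the paper needs a separate argument (Lemma~\ref{lem:5cycles}) for blocks \emph{all} of whose faces are $5$-cycles, and otherwise must start the ear decomposition from a face of size $\neq 5$ (Lemma~\ref{lem:facecol}); your sketch has no analogue of this and the appeal to ``$3$-degenerate with treewidth $2$, ample slack'' does not substitute for it.
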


\begin{proof}[Proof Sketch of Theorem \ref{thm:4col}]
Theorem \ref{thm:4col} is proved using a two-level induction process. The first level
is using a \emph{block decomposition} of the graph. Any connected graph can be viewed
as a tree of its constituent blocks. We color the blocks in order so that when we
color a block, at most one of its vertices is previously colored. Each block is colored 
without affecting the color of the already colored vertex. The second level 
of the induction is required for coloring each of the blocks. We use
ear decomposition on each block and color the faces of the block in sequence.
However, the proof is quite technical and involves several cases of analysis at each 
step.
\qed
\end{proof}



\noindent \textbf{Acknowledgments:} We would like to thank
I. Vinod Reddy for suggesting the problem, 
Rogers Mathew and N. R. Aravind for helpful discussions and  
the anonymous reviewer who pointed
out an issue with the proof of Theorem \ref{thm:pwneighbor}.

\bibliographystyle{elsarticle-num}

\bibliography{BibFile}
\appendix

\section{Proofs of Cases 2 and 3 of Theorem \ref{thm:technical}}
\subsection{Proof of Case 2}\label{sec:case2pw}
$X$ is a bag that introduces one vertex $v_k$ and  $2k - 1 \in  \{U(x): x \in X'\}$. 
This means that 
    $F(X') \setminus 
     \{U(x): x \in N(v_k) \cap X'\} \neq \emptyset$.
    In this case, $C(v_k) = U(w) = 2k-1$, 
    for a vertex $w$ that is needy in $X'$, chosen according to Rule 1. 
    
    If $C(w) \notin \{1, 2, 3, \ldots, 2k - 2\}$,
    then $(S \cup \{w\}) \setminus \{v_k\}$ is an expensive subset of size $k$ in $X'$, the predecessor of $X$. This contradicts the
    choice of $X$. Hence $C(w) \in \{1, 2, 3, \ldots, 2k - 2\}$. 
    By invariant 1, for any $v, v' \in X'$, we have $C(v) \neq C(v')$.
    We can rule out the colors $\{1, 3, \dots, 2k-3\}$ since they appear
    as $C(v_i)$, for $1\leq i \leq k-1$. We can also rule out the colors
    $\{2, 4, \dots, 2r\}$ since the vertices $v_i$ are needy for $1 \leq i \leq r$. Hence $C(w) \in \{2(r+1), 2(r+2), \ldots, 2k - 2\}$. 
    Let $C(w)  = 2j$ for some $r+1 \leq j \leq k-1$. Since  
    $U(v_j) = 2j$, we have  $|\{x : x\in X', C(w) = U(x)\}| \geq 1$. 
    
     Without loss of generality,
    let $\{2, 4, \ldots, 2\ell\} \subseteq 
    \{U(x): x\in N(v_k) \cap X'\}$
    and $\{2(\ell+1), \ldots, 2r\} \cap 
    \{U(x): x\in N(v_k) \cap X'\}  = \emptyset$
    for some $0 \leq \ell \leq r$. Clearly, we cannot choose $C(v_k)$ from 
    $\{2, 4, \ldots, 2\ell\}$.  Let us try to understand why $2k-1$ was chosen as 
    $C(v_k)$ over elements of $\{2(\ell+1), \ldots, 2r\}$. We have two subcases.
    \begin{itemize}
        \item $2k-1 \in F_1(X')$.
    Then $w$ is the lone vertex  in $X'$ such that $U(w) = 2k - 1$.  
    Without loss of generality, let colors $2(\ell +1), \ldots, 2\ell' \in F_1(X')$ and 
    let $2(\ell' + 1), \ldots, 2r \in F_{>1}(X')$ for some $\ell < \ell' \leq r$.
    
    As per Rule 1, $2k-1$ was chosen as a color $c \in F_1(X') \setminus
    \{U(x): x\in N(v_k) \cap X'\}$ 
    that minimizes  $|\{x : x\in X', 
    C(U^{-1}(c)) = U(x)\}|$. Since $|\{x : x\in X', C(w) = U(x)\}| \geq 1$,
    for each $\ell + 1 \leq i \leq \ell'$, we have 
    $|\{x : x\in X', C(U^{-1}(2i)) = U(x)\}| = 
    |\{x : x\in X', C(v_i) = U(x)\}| = |\{x : x\in X',2i - 1 = U(x)\}|  \geq 1$. 
    So there exists a set $W' = \{w_{\ell +1}, \ldots, w_{\ell'}\}$ (disjoint from
    $S$) such that $U(w_i) = 2i - 1$, for each $\ell + 1 \leq i \leq \ell'$.
    
    Since $2(\ell' + 1), \ldots, 2r \in F_{>1}(X')$, we have a set 
    $W'' = \{w_{\ell' + 1}, \ldots, w_r\}$ (disjoint from $S$) such that $U(w_i) = U(v_i) = 2i$,
    for each $\ell' + 1 \leq i \leq r$.  Thus we have $W = W' \cup W''$ such that 
    $|W| = r - \ell$ that is disjoint from $S$.
    
        \item $2k-1 \in F_{>1}(X')$. Since a member of $F_{>1}(X')$ was chosen,
        it follows that $F_1(X') \setminus 
        \{U(x): x\in N(v_k) \cap X'\} = \emptyset$. 
        Hence $\{2(\ell+1), \ldots, 2r\} \subseteq 
        F_{>1}(X')$. So we have a set 
        $W = \{w_{\ell + 1}, \ldots, w_r\}$ (disjoint from $S$) 
        such that $U(w_i) = U(v_i) = 2i$,
        for each $\ell + 1 \leq i \leq r$.  Thus we have $W$ with 
        $|W| = r - \ell$ that is disjoint from $S$.
    \end{itemize}
    
    
    If $\ell = 0$, then $|W| = r$, giving us $|X| \geq |S| + |W \cup Z| \geq 3k/2$. In what follows, we will assume $\ell \geq 1$. That is, 
    $v_k$ has at least one needy neighbor.
    Recall that $\{2, 4, \ldots, 2\ell\} \subseteq F(X') \cap 
     \{U(x): x\in N(v_k) \cap X'\} $.
    For $1 \leq i \leq \ell$, let $v'_i \in N(v_k) \cap X'$ such
    that\footnote{The vertices $v'_i$ may or may not be the same as $v_i$.} 
    $U(v'_i) = 2i$.     
    
    Now let us see how $U(v_k)$ was assigned as $2k$. By Rule 2, 
    $U(v_k)$ is set to $C(y)$  such that $y$ is a needy neighbor that
    minimizes $|\{x : x \in X', U(y) = U(x) \}|$. So $C(y) = 2k$. 
    If $U(y) \notin \{1, 2, \dots, 2k-2\}$, then $(S \cup \{y\}) \setminus 
    \{v_k\}$ is a $k$-expensive subset in $X'$, contradicting the choice of $X$. So $U(y) \in \{1, 2, 3,\ldots, 2k-2\}$. Since $y$ is needy, 
    as in Case 1, we can rule out $\{1, 3, \ldots, 2k - 3\} \cup \{2(r+1), 2(r+2), \ldots, 2k-2\}$. So $U(y) \in \{2, 4, 6,\ldots, 2r\}$. 
    
    Let $U(y) = 2j'$, where $1 \leq j' \leq r$. Since $U(v_{j'}) = 2j'$ as well, we have
    $|\{x : x \in X', U(y) = U(x) \}| \geq 2$.
    All of $v'_1, \ldots, v'_{\ell}$ are needy and neighbors to $v_k$. Since
    $y$ was chosen over these vertices, it follows that there exists a 
    set of vertices $Y = \{y_1, \dots, y_{\ell}\}$, disjoint from $S$ such that $U(y_i) = U(v'_i) = 2i$, for $1\leq i \leq \ell$. 
    
    The sets $W$ and $Y$ are disjoint, but need not be disjoint from $Z$.
    Since $|W \cup Y| = r$ and $|Z| = k - r$, we have $|W \cup Y \cup Z| \geq k/2$.
    Since $W, Y, Z$ are all disjoint from $S$, we have that $|X| \geq |S| + 
    |W \cup Y \cup Z| \geq 3k/2$.
    \qed
\subsection{Proof of Case 3}\label{sec:case3pw}

$X$ is a special bag that introduces $v_k$ and $\widehat v_k$. If $F(X')=\emptyset$, then none of the $k-1$ vertices in $S \cap X'$ are needy in $X'$.
Hence $|X'|\geq 2(k-1)$. This implies that 
$|X|\geq 2(k-1)+2 =2k$ and we are done.  

Else, $|F(X')|\geq 1$.
Let us first note that since $S$ is an expensive subset, so is $S\cup \{\widehat v_k\} \setminus \{v_k\}$.
Since $|F(X')|\geq 1$, at least one of  $C(v_k)$ or $C(\widehat v_k)$ will be chosen from $F(X')$.
Without loss of generality, let 
$v_k$ be a vertex such that $C(v_k) \in F(X')$.
Let $C(v_k)=U(w)=2k-1$, where $w$  is a needy vertex in $X'$, chosen according to Rule 1. 

    If $C(w) \notin \{1, 2, 3, \ldots, 2k - 2\}$,
    then $(S \cup \{w\}) \setminus \{v_k\}$ is an expensive subset of size $k$ in $X'$, the predecessor of $X$. This contradicts the
    choice of $X$. Hence $C(w) \in \{1, 2, 3, \ldots, 2k - 2\}$. 
    By invariant 1, for any $v, v' \in X'$, we have $C(v) \neq C(v')$.
    We can rule out the colors $\{1, 3, \dots, 2k-3\}$ since they appear
    as $C(v_i)$, for $1\leq i \leq k-1$. We can also rule out the colors
    $\{2, 4, \dots, 2r\}$ since the vertices $v_i$ are needy for $1 \leq i \leq r$. Hence $C(w) \in \{2(r+1), 2(r+2), \ldots, 2k - 2\}$. 
    Let $C(w)  = 2j$ for some $r+1 \leq j \leq k-1$. Since  
    $U(v_j) = 2j$, we have  $|\{x : x\in X', C(w) = U(x)\}| \geq 1$. 
    
    Since $\{2, 4, \cdots, 2r\} \subseteq F(X')$, 
    let us see why $2k-1$ was chosen as $C(v_k)$ over these colors. 
    We have two subcases. 
    \begin{itemize}
        \item $2k-1 \in F_1(X')$.
    Then $w$ is the lone vertex  in $X'$ such that $U(w) = 2k - 1$.  
    Without loss of generality, let colors $2, 4, \ldots, 2\ell' \in F_1(X')$ and 
    let $2(\ell' + 1), \ldots, 2r \in F_{>1}(X')$ for some $0 < \ell' \leq r$.
    
    As per Rule 1, $2k-1$ was chosen as a color $c \in F_1(X')$ 
    that minimizes  $|\{x : x\in X', 
    C(U^{-1}(c)) = U(x)\}|$. 
    Since $|\{x : x\in X', C(w) = U(x)\}| \geq 1$,
    for each $1 \leq i \leq \ell'$, we have 
    $|\{x : x\in X', C(U^{-1}(2i)) = U(x)\}| = 
    |\{x : x\in X', C(v_i) = U(x)\}| = |\{x : x\in X',2i - 1 = U(x)\}|  \geq 1$. 
    So there exists a set $W' = \{w_{1}, \ldots, w_{\ell'}\}$ (disjoint from
    $S$) such that $U(w_i) = 2i - 1$, for each $1 \leq i \leq \ell'$.
    
    Since $2(\ell' + 1), \ldots, 2r \in F_{>1}(X')$, we have a set 
    $W'' = \{w_{\ell' + 1}, \ldots, w_r\}$ (disjoint from $S$) such that $U(w_i) = U(v_i) = 2i$,
    for each $\ell' + 1 \leq i \leq r$.  Thus we have $W = W' \cup W''$ such that 
    $|W| = r$ that is disjoint from $S$.
    
        \item $2k-1 \in F_{>1}(X')$. Since a member of $F_{>1}(X')$ was chosen,
        it follows that $F_1(X') = \emptyset$.  
                Hence $\{2, 4,  \ldots, 2r\} \subseteq 
        F_{>1}(X')$. So we have a set 
        $W = \{w_{1}, \ldots, w_r\}$ (disjoint from $S$) 
        such that $U(w_i) = U(v_i) = 2i$,
        for each $1 \leq i \leq r$.  Thus we have $W$ with 
        $|W| = r$ that is disjoint from $S$.
    \end{itemize}
    
    In either case, we have $W$ that is disjoint from $S$ and $|W| = r$. Recall that we also 
    have $Z$ disjoint from $S$, such that $|Z| = k - r$. Thus we get that $|X| \geq |S| + |W \cup Z| \geq 3k/2$.   \qed
\section{Proof of Theorem \ref{thm:ndclosed}}\label{sec:ndproofs}

We \cfcn{} color the type graph $H$ using 
$\chi_{CN}(H)$ colors. Let $C_H : V_H \rightarrow[\chi_{CN}(H)]$ be that coloring and  $U_H: V_H \rightarrow[\chi_{CN}(H)]$ be the corresponding
assignment of unique colors.
Now, we derive a coloring $C: V(G) \rightarrow \{0, 1, 2, \dots, s, s+1, s+2\}$ from $C_H$, 
with $s=\chi_{CN} (H) + \frac{ind(G)}{3}$. 
Also we identify a unique color in the neighborhood of each vertex, denoted
$U:V(G)\rightarrow  \{0, 1, 2, \dots, s, s+1, s+2\}$. 
Let $V_1,V_2, \dots , V_t$ be the type partition of $V$. 
We assign colors to the vertices as follows: 
For each $V_i$, choose a representative vertex $r_i \in V_i$ and assign $C(r_i)=C_H(i)$. For each $V_i$, for all vertices $x \in V_i\setminus \{r_i\}$, 
we assign $C(x) = 0$. We make the below observations.
\begin{itemize}
    \item Each of the representative vertices $r_i$ has a uniquely colored neighbor, as $C_H$ is a \cfcn{} coloring of $H$.
    
      \item If $V_i$ is a clique, let $r_j$ be the uniquely colored neighbor of $r_i$ (note that $r_j$ can be $r_i$ itself). 
    For each $x\in V_i$, $r_j$ serves as the uniquely colored neighbor. 
    
    \item If $V_i$ is an independent set such that $C_H(i)\neq U_H(i)$,  
    the uniquely colored neighbor of $r_i$ is the uniquely colored neighbor for all vertices in $V_i$. 
\end{itemize}

What remains to be handled are the independent sets $V_i$, such that $C_H(i)=U_H(i)$. 
We call these type sets $V_i$ (independent sets) as the \emph{bad sets}. 
We do not consider singleton $V_i$'s as  bad sets.
Also, all the representative vertices $r_i$ see a uniquely colored neighbor, regardless of whether $V_i$ is bad or not.  
Once a bad set $V$ is fixed, we no longer call it a bad set. 



\noindent\textbf{Reduction of bad sets:} We process the bad sets in iterations.  We require at most $ind(G)/3$ iterations. In iteration $\ell$, where $1 \leq \ell \leq ind(G)/3$, if there exists a bad set $V_i$, which has at least two neighboring bad sets, we do the following. We call $V_i$ 
    the \emph{lead set} in this iteration.
    Let the neighboring bad sets of $V_i$ be $V_{i_1}, V_{i_2}, \dots, V_{i_m}$, where $m\geq 2$. 
    Choose a vertex $v_i\in V_i\setminus \{r_i\}$ and reassign $C(v_i)= \chi_{CN}(H) + \ell$. We call $v_i$ as the \emph{lead representative} of this iteration.
    For each vertex $x\in V_i\setminus \{r_i,v_i\}$, reassign $C(x) = \chi_{CN}(H) + \ell+1$. 
    Each vertex  in $V_i$ serves as its own the uniquely colored neighbor.
    For each vertex in $V_{i_p}$, the vertex $v_i\in V_i$ serves as the uniquely colored neighbor. 
    
    Let us see why the reduction operation fixes all the lead sets and all their 
    neighboring 
     bad sets. None of the sets chosen as the lead sets in two different
    iterations    are adjacent, else they could have been considered in the same
    iteration. 
    For bad sets that neighbors multiple lead sets, the uniquely colored neighbor
    is provided by the lead representative that was considered earliest
    in the reduction operation.

    
    

In each of the above iterations, at least 3 bad sets are colored. Hence it suffices
to have $ind(G)/3$ iterations. The number of colors required are $ind(G)/3 + 1$.
After the reduction operations, we are left with the bad sets which have at most one bad set as neighbor. We handle them as follows.

\begin{itemize}
    
    \item \textbf{Case 1: Bad sets $V_i$ and $V_j$ which are neighbor, 
    each of which is not neighbors to any other bad sets.}
    
    We note that the color $\chi_{CN} (H) + \frac{ind(G)}{3} + 1 = s+1$ is 
    possibly used only
    in the last iteration of the reduction operation, but does not serve as a 
    unique color for any of the vertices in the bad sets of that iteration. 
    
    We use the colors $s+1$ and $s+2$ for coloring the bad sets $V_i$ and $V_j$. 
    Choose two vertices $x_i\in V_i\setminus \{r_i\}$ and $x_j\in V_j\setminus \{r_j\}$. 
    Reassign $C(x_i)=s+1$ and $C(x_j)=s+2$. 
    These vertices $x_i$ and $x_j$ serve as uniquely colored neighbors for 
    the vertices in $V_j$ and $V_i$ respectively.
    
    \item \textbf{Case 2: A bad set $V_i$ that has no bad set as neighbor.}
    
    Reassign $C(v)=s+2$, for all vertices $v\in V_i\setminus \{r_i\}$. All the vertices
    in $V_i$ serve as their own uniquely colored neighbors. 
\end{itemize}

The above coloring is a \cfcn{} coloring. We use $\chi_{CN}(H)$ colors to color the representative vertices of each $V_i$. 
and $ind(G)/3 +1 $ colors in the reduction operation.
Taking the colors
$\{0, s+2\}$ into account, the total number of colors used is $\chi_{CN}(H) + 
ind(G)/3 + 3$. 
\qed
\section{Proof of Theorem \ref{thm:cfcnd+1}}\label{sec:dcproofs}
Let ${\sf dc}(G)=d$. 
That is, there is a set $X\subseteq V$, with $|X|=d$ 
such that $G[V\setminus X]$ is a disjoint union of cliques. 
Let $X = \{v_1, v_2, \dots, v_d\}$ and 
$Y = \{v_i \in X : \mbox{deg}_X (v_i)\geq 1\}$. 

We have three cases and in each case, we explain how to get \cfcn{} coloring. 
Cases 1 and 2 use $d+1$ colors and case 3 uses 3 colors.
\begin{enumerate}
    \item There is a clique $K' \subseteq G[V \setminus X]$, with $u\in K'$, such that $|N(u) \cap (X\setminus Y)| \geq 2$. 
    \begin{itemize}
        \item Without loss of generality, let $v_{i_1}, v_{i_2}, \dots , v_{i_m} \in N(u) \cap (X\setminus Y)$, where $m\geq 2$.  
        \item Assign $C(u)=i_1$ and $C(v_{i_\ell})=d+1$, for all $1 \leq \ell \leq m$.
        
        Note that the color $i_2$ is not assigned and will be used for future coloring. 
        \item For each of the uncolored vertices 
        $v_i\in X\setminus Y$, $C(v_i)=i$. 
        \item For each of the cliques $K\subseteq G[V\setminus X]$, 
        \begin{itemize}
            \item If $K$ has a colored vertex, color the remaining vertices with $d+1$.
            \item Else, choose a vertex in $K$ and assign the color $i_2$. 
            Color the remaining vertices with $d+1$. 

        \end{itemize}
    \end{itemize}
        For the vertices $v_{i_\ell}\in N(u)\cap (X\setminus Y)$, where $1\leq \ell \leq m$,
    the vertex $u$ is the uniquely colored neighbor. 
        For all the other vertices $v_i\in X$, $v_i$ itself is the uniquely colored neighbor. 
        For all the vertices in cliques $K \subseteq G[V\setminus X]$, the vertex colored $i_1$ or $i_2$ will serve as the uniquely colored neighbor.

    \item $Y \neq \emptyset$. That is, there exists two vertices $v_i,v_j\in X$ such that $\{v_i,v_j\}\in E(G)$.
    \begin{itemize}
        \item Assign $C(v_i)=i$ and $C(v_j)=d+1$. 
        
        Note that the color $j$ is not used and will be used for 
        future coloring. 
        \item For each of the uncolored vertices $v_k\in X$, assign $C(v_k)=k$. 
        \item For each of the cliques $K$, choose a vertex and 
        assign the color $j$. 
        Color the rest of the vertices with $d+1$. 
    \end{itemize}

    Each vertex in $X\setminus\{v_j\}$ serves as its own uniquely colored neighbor. For the vertex $v_j$, the uniquely colored neighbor is $v_i$. 
    For each clique $K$, the vertex colored $j$ is the uniquely colored neighbor for all the vertices in $K$. 
    
    \item Else, (i) $X$ is an independent set and (ii) for each clique $K$,
    and for all $w\in K$, we have $|N(w)\cap X|\leq 1$. 
    \begin{itemize}
        \item For each clique $K$, choose a vertex and assign the color 1 and color the remaining vertices with 2. 
        \item For all vertices $x\in X$, assign $C(x)=3$. 
    \end{itemize}
    Note that this is a \cfcn{} 3-coloring of $G$. 
    Each vertex in $X$ serves as its own uniquely colored neighbor. For each clique $K$, the vertex colored $1$ acts as a uniquely colored neighbor for all the vertices in $K$.
    
\end{enumerate}
\qed

\section{Proof of Theorem \ref{thm:4col}}\label{sec:outer}
In this section, whenever we refer to an outerplanar
graph $G$, we will also be implicitly referring
to a planar drawing of $G$ with all the vertices appearing in the outer face. We will 
abuse language and say ``faces of $G$'' when 
we want to refer to faces of the above planar 
drawing. 

Theorem \ref{thm:4col} is proved using a two-level induction process. The first level
is using a \emph{block decomposition} of the graph. Any connected graph can be viewed
as a tree of its constituent blocks. We color the blocks in order so that when we
color a block, at most one of its vertices is previously colored. Each block is colored 
without affecting the color of the already colored vertex. The second level 
of the induction is required for coloring each of the blocks. We use
ear decomposition on each block and color the faces of the block in sequence.
However, the proof is quite technical and involves several cases of analysis at each 
step.

We summarize the relevant aspects of block decomposition
below.
The reader is referred to a standard textbook in graph theory \cite{Diestel} 
for more details on this.
\vspace{-0.05in}
\begin{itemize}
    \item A \emph{block} is a maximal connected subgraph without a cut vertex.
    \item Blocks of a connected graph are either maximal 2-connected subgraphs, or edges (the edges which form a block will be bridges).
    \item Two distinct blocks overlap in at most one vertex, which is a cut vertex.
    \item Any connected graph can be viewed as tree of its constituent blocks. 
\end{itemize}
In the following discussion, we explain how to construct a coloring 
$C: V(G) \rightarrow \{1, 2, 3, 4\}$ for an outerplanar graph $G$. At any 
intermediate stage, the coloring
$C$ will satisfy\footnote{The condition marked $\star$ is violated in a few cases. In the 
exceptional cases where it is violated, we shall explain how the cases are handled.} 
the following invariants:
\begin{framed}
\noindent \textbf{Invariants of $C$}
\begin{itemize}
    \item  Every vertex $v$ that has already been assigned a color $C(v)$ has a neighbor $w$, such that $C(w) \neq C(x)$, 
    for all
    $x \in N(v) \setminus \{w\}$. 
    For $v$, the function $U: V(G) \rightarrow \{1, 2, 3, 4\}$ denotes 
    the color of $w$, its uniquely colored neighbor.
    
    \item $\forall v\in V(G)$, $C(v)\neq U(v)$.
    \item $\forall \{v, w\} \in E(G)$, $C(v) \neq C(w)$
     and $|\{C(v),U(v),C(w),U(w)\}|=3$. 
     \textbf{($\star$)}
\end{itemize}
\end{framed}

Theorem \ref{thm:4col} is proved by using an induction
on the block decomposition of the graph $G$ and 
the below results. 

\begin{lemma}\label{lem:5cycles}
If $G$ is a 2-connected outerplanar graph such that all its inner faces contain exactly 5 vertices, then $G$ has a  \cf{} 
coloring using 3 colors.
\end{lemma}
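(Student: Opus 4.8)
The plan is to prove Lemma~\ref{lem:5cycles} by induction on the number of inner faces of $G$. The key structural fact is that a $2$-connected outerplanar graph is a Hamiltonian cycle together with a set of pairwise non-crossing chords, and its \emph{weak dual} (the graph whose vertices are the inner faces, with two faces adjacent when they share an edge) is a tree $T$; since every inner face here is a $5$-cycle, each chord is shared by exactly two faces, and $G$ is built by gluing $5$-cycles, one per node of $T$, along single edges. For the base case $T$ is a single node, i.e.\ $G = C_5$; here every vertex has degree $2$, so a CFON coloring need only give every vertex two differently colored neighbours, and one checks that the coloring $(C(u_1),\dots,C(u_5)) = (1,2,2,3,1)$ around the cycle does this.

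For the inductive step, let $f$ be a leaf face of $T$. It shares exactly one edge $\{a,b\}$ with the rest of $G$, and writing $f = a\,x\,y\,z\,b\,a$, the vertices $x,y,z$ are private to $f$; since $f$ is a leaf, none of them lies on any other face, so each has degree exactly $2$ in $G$, with $N(x) = \{a,y\}$, $N(y) = \{x,z\}$, $N(z) = \{y,b\}$. Let $G' = G - \{x,y,z\}$. Then $G'$ is again $2$-connected and outerplanar, with one fewer inner face, and all of its inner faces are $5$-cycles (the chord $\{a,b\}$ survives, now on the outer boundary); by the induction hypothesis $G'$ has a CFON $3$-coloring $C$, together with an assignment $U$ of a uniquely colored neighbour to every vertex.

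It remains to extend $C$ to $x,y,z$. Passing from $G'$ to $G$, the only old vertices whose neighbourhoods change are $a$ (gaining $x$) and $b$ (gaining $z$); hence $a$ retains its witness as long as $C(x) \neq U(a)$, and $b$ retains its as long as $C(z) \neq U(b)$. Meanwhile $x$ is satisfied iff $C(a) \neq C(y)$, $z$ iff $C(b) \neq C(y)$, and $y$ iff $C(x) \neq C(z)$. If $C(a) \neq C(b)$ these force $C(y)$ to be the remaining third color; if $C(a) = C(b)$ there are two admissible choices of $C(y)$. In either case, after fixing $C(y)$ we choose $C(x)$ from $\{1,2,3\} \setminus \{U(a)\}$ and then $C(z)$ from $\{1,2,3\} \setminus \{U(b), C(x)\}$, both nonempty; this yields a CFON $3$-coloring of $G$ and finishes the induction.

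The step requiring the most care is the last one: checking that the demands of the three new degree-$2$ vertices and the constraint of not disturbing $a$ and $b$ are simultaneously satisfiable with only three colors, and in particular that attaching the leaf face never destroys the uniquely colored neighbour that $a$ or $b$ already had. If the ambient proof of Theorem~\ref{thm:4col} additionally needs the invariant $(\star)$ to be preserved when this lemma is invoked, one checks by the same case analysis that $C(y)$ can moreover be taken different from $C(a)$ and $C(b)$ and that $C(x), C(z)$ can be chosen so as to keep the incident edges bichromatic.
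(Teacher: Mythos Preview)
Your proof is correct and follows essentially the same face-by-face induction as the paper: both arguments add one pentagonal face at a time across a shared edge $\{a,b\}$ (the paper phrases this via ear decomposition, you via removing a leaf of the weak dual, which are equivalent here). The difference is purely in how the extension step is argued. The paper enumerates the possible patterns of $(C(w_1),U(w_1),C(w_2),U(w_2))$ on the shared edge and writes down an explicit coloring of the three new vertices in each of four cases. You instead list the five constraints the three new colors must satisfy ($C(y)\notin\{C(a),C(b)\}$, $C(x)\neq U(a)$, $C(z)\notin\{U(b),C(x)\}$) and observe that with three colors they are simultaneously satisfiable; this is shorter and avoids case analysis, at the cost of not exhibiting the coloring explicitly.

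One small remark: your final paragraph about preserving invariant $(\star)$ is unnecessary. In the paper, Lemma~\ref{lem:5cycles} is only invoked when the entire graph is $2$-connected with all inner faces pentagons, so the whole of $G$ is colored at once and no subsequent face-extension (which is where $(\star)$ is used) ever occurs; the paper's own footnote makes this point.
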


\begin{theorem}\label{thm:otherthan5cycle}
Let $G$ be an outerplanar graph.
\begin{enumerate}
    \item If $B$ is a block of $G$ that is either a bridge, or contains an inner 
face $F$ with $|V(F)|\neq 5$, then $B$ has a  \cf{} 
coloring
using at most 4 colors.

    \item If $B$ is a block of $G$, with exactly one vertex $v$ 
precolored with color $C(v)$ and unique color $U(v)$,
then the rest of $B$ has a \cf{} 
coloring
using at most 4 colors, while retaining $C(v)$ and 
$U(v)$. 
\end{enumerate}
\end{theorem}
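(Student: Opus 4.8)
The plan is to reduce everything to the structure of a $2$-connected outerplanar block and then process its inner faces one at a time. A bridge block is a single edge, which is \cf{} $2$-colorable directly, and the precolored version of part~2 is immediate: give the uncolored endpoint a color different from $C(v)$ and $U(v)$, and if $v$'s prescribed unique color is not yet realized inside $B$, use this new endpoint for it. So assume $B$ is $2$-connected. Then the inner faces of $B$, with two faces adjacent when they share an edge, form a tree (the weak dual), so I can list the inner faces as $F_1, F_2, \dots, F_m$ such that for every $i \ge 2$ the face $F_i$ shares exactly one edge $e_i = \{a_i, b_i\}$ with $F_1 \cup \dots \cup F_{i-1}$, while the vertices of $F_i$ outside this union form the interior of a path $a_i - u_1 - \dots - u_{k_i} - b_i$ with $k_i = |V(F_i)| - 2 \ge 1$.

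First I would settle the base case, a single cycle $C_\ell$. Here the \cf{} condition is clean: in a cycle, $v$ has a uniquely colored neighbor iff its two neighbors receive different colors, so a \cf{} coloring of $C_\ell$ is precisely a coloring in which no two vertices at distance $2$ agree. Such a coloring exists with $3$ colors for every $\ell \ge 3$ (and with $2$ when $4 \mid \ell$); moreover, when $\ell \ne 5$ one can arrange the coloring and the choice of unique neighbors so that the strengthened invariant $(\star)$ holds on every edge, whereas for $\ell = 5$ any $3$-coloring necessarily produces an edge whose four associated values span only two colors. This is exactly why $\ell = 5$ is excluded from part~1 and handled instead by Lemma~\ref{lem:5cycles}. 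For part~1 I pick an inner face with $|V(F)| \ne 5$, which exists by hypothesis, to be $F_1$ and color it this way; for part~2 I order the faces so that $F_1$ contains the precolored vertex $v$ and color $F_1$ consistently with $C(v)$ and $U(v)$, which may need the fourth color if $v$ lies only on $5$-faces.

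The main work is the inductive gluing step. Having colored $B_{i-1} := F_1 \cup \dots \cup F_{i-1}$ so that the invariants stated above hold, I extend the coloring to $u_1, \dots, u_{k_i}$ using the palette $\{1,2,3,4\}$ so that (i) every $u_j$ acquires a uniquely colored neighbor, (ii) $a_i$ and $b_i$ retain theirs, which is guaranteed as soon as $C(u_1) \ne U(a_i)$ and $C(u_{k_i}) \ne U(b_i)$, and (iii) $(\star)$ is restored on every new edge. Since $a_i$ and $b_i$ are already colored with $C(a_i) \ne C(b_i)$ and the set $\{C(a_i), U(a_i), C(b_i), U(b_i)\}$ of size $3$, this is a bounded path-extension problem, which I would settle by cases on $k_i$: the values $k_i \in \{1, 2, 3\}$ (triangle, $4$-face, $5$-face) are finite checks over the colorings of the two endpoints, and $k_i \ge 4$ reduces to a shorter case by colouring an interior stretch of $u_1, \dots, u_{k_i}$ with an anchored two-color alternating pattern whose vertices become self-certifying. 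Having a fourth color available is precisely what lets the $5$-face be glued on while simultaneously meeting the endpoint conditions in (ii). Part~2 then follows by running the same process starting from the face $F_1$ that contains $v$, never recoloring $v$ itself.

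I expect the gluing case analysis to be the real obstacle, specifically keeping the tight invariant $(\star)$ alive while matching the two prescribed endpoints. For short faces, some endpoint configurations leave essentially no slack, and in a few of them $(\star)$ cannot be maintained at all; these exceptional edges must be flagged explicitly, and one must check that any face later glued onto such an edge still admits a valid --- though $(\star)$-violating --- extension. Verifying that the exceptions do not cascade is the delicate bookkeeping, and it is the technical heart of the full argument.
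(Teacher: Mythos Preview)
Your plan is essentially the paper's proof: start from a suitably chosen initial face (Lemma~\ref{lem:facecol} for part~1, Lemma~\ref{lem:onevertexcolored} for part~2), run an ear decomposition, and extend across each new ear by a path-extension lemma (Lemma~\ref{lem:path}) that preserves the invariants, with a short list of exceptional edges where $(\star)$ is deliberately broken and then repaired at the next face (the paper's Lemma~\ref{lem:CuCvsame}). One correction to your sketch: a two-color alternating stretch on the interior of a long ear cannot keep $(\star)$ alive, since on any edge inside such a stretch the four values $\{C(v),U(v),C(w),U(w)\}$ collapse to two; the paper instead uses a period-$3$ pattern on long ears and disposes of the residues modulo $3$ by reassigning the last one or two vertices.
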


\begin{proof}[Proof of Theorem \ref{thm:4col}]
Let $G$ be an outerplanar graph. 
We apply block decomposition on $G$ which results in 
blocks that are either maximal 2-connected subgraphs or single edges. 

If $G$ is 2-connected and all its inner 
faces have exactly 5 vertices, then by Lemma 
\ref{lem:5cycles}, $G$ has a \cf{} coloring using 3 colors.

If $G$ does not fit the above description, 
then $G$ has a block $B$ such that either $B$ is
an edge, or $B$ has an inner face $F$ with 
$|V(F)|\neq 5$. In this case, by Theorem
\ref{thm:otherthan5cycle}.1, $B$ has a  
\cf{} coloring using at most 4 colors.

Viewing $G$ as a tree of its blocks, we can 
start coloring blocks that are adjacent to 
blocks that are already colored. Suppose the
block $B$ is already colored, and let $B'$ be a
block adjacent to $B$. Let $x$ be the cut-vertex between
the blocks $B$ and $B'$.
We use Theorem
\ref{thm:otherthan5cycle}.2 to obtain a 
\cf{} coloring of $B'$ using at most 4 colors, while retaining $C(x)$ and $U(x)$.
\qed
\end{proof}

We now proceed towards proving Lemma \ref{lem:5cycles} and Theorem \ref{thm:otherthan5cycle}.
Lemma \ref{lem:5cycles} and Theorem \ref{thm:otherthan5cycle} 
discusses the coloring of blocks, which is accomplished by means of induction on the faces of the blocks. Towards this end, 
we use the following fact about ear decomposition of 2-connected outerplanar graphs. 
For a proof of the below lemma, we refer the reader to 
\cite{Aubry2016} where this is stated as Observation 2.

\begin{lemma}[Ear Decomposition]\label{lem:ear}
Let $B$ be a 2-connected block in an outerplanar graph. Then $B$ has an ear decomposition $F_0, P_1, P_2, \ldots, P_q$ satisfying the following:
\begin{itemize}
    \item $F_0$ is an arbitrarily chosen inner face of $B$.
    \item Every $P_i$ is a path with end points $v, w$
    such that $\{v, w\}$ is an edge in $F_0 \cup \bigcup_{1 \leq j<i} P_j$.
    Thus $P_i$ together with the edge $\{v, w\}$ forms a face of $B$.
\end{itemize}
\end{lemma}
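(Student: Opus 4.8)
The plan is to prove Lemma~\ref{lem:ear} by induction on the number $f$ of inner faces of $B$, at each step peeling off one ``outermost'' inner face as the last ear. First I would collect the standard structural facts about a $2$-connected outerplanar graph with its (essentially unique) outerplanar embedding: the outer face is bounded by a Hamiltonian cycle $H$; every inner face is bounded by a cycle; since $B$ is $2$-connected every edge lies on exactly two faces, so an edge of $H$ lies on the outer face and exactly one inner face, whereas a chord (an edge not on $H$) lies on exactly two inner faces. It follows that two distinct inner faces share at most one edge, and such a shared edge is a chord. Consequently the \emph{weak dual} of $B$ --- one vertex per inner face, with an edge joining two inner faces that share a chord --- is a tree: it is connected, and it is acyclic since a cycle in the weak dual would enclose a vertex of $B$, contradicting outerplanarity.

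For the base case $f=1$ the graph $B$ is a single cycle, so $F_0 = B$ with $q = 0$ works trivially. For the inductive step assume $f \ge 2$. Then the weak dual tree has at least two leaves, so I would pick a leaf face $F_q$ different from the prescribed inner face $F_0$. Since $F_q$ is a leaf it shares exactly one edge with the union of all other inner faces, namely a chord $e_q = \{v,w\}$; and no chord is incident to an internal vertex of the $v$--$w$ path $P_q$ formed by the remaining boundary edges of $F_q$, because such a chord would place $F_q$ on a second inner face. Hence $P_q$ is a sub-path of $H$ whose internal vertices all have degree $2$ in $B$. I would delete these internal vertices together with the edges of $P_q$, keeping $e_q$, and call the result $B'$; its outer boundary is the Hamiltonian cycle obtained from $H$ by rerouting the arc $P_q$ through $e_q$, so $B'$ is again a $2$-connected outerplanar graph, it has exactly $f-1$ inner faces, and $F_0$ is still one of them. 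By the induction hypothesis $B'$ has an ear decomposition $F_0, P_1, \ldots, P_{q-1}$. Since $e_q$ is an edge of $B' = F_0 \cup \bigcup_{1\le j < q} P_j$ and $P_q \cup \{e_q\} = F_q$ is a face of $B$, appending $P_q$ gives an ear decomposition $F_0, P_1, \ldots, P_q$ of $B$; moreover $F_0 \cup \bigcup_{1\le j \le q} P_j = B' \cup P_q = B$.

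The real work --- and the main obstacle --- is entirely in the first paragraph: justifying the planar-topology claims, namely that the outer boundary is Hamiltonian, that chords and $H$-edges meet inner faces in the numbers claimed, that two inner faces share at most one edge, and that the weak dual is a tree. These are classical facts about outerplanar graphs, and once they are available the induction is routine. Two small pieces of bookkeeping I would also verify: that deleting the degree-$2$ internal vertices of the ear $P_q$ preserves $2$-connectedness (so the induction hypothesis applies to $B'$), and that the condition ``$\{v,w\}$ is an edge of $F_0 \cup \bigcup_{j<i} P_j$'' transfers unchanged between $B'$ and $B$, which holds because $E(B') \subseteq E(B)$ and the relevant subgraphs are literally the same. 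An equivalent non-recursive presentation would root the weak dual tree at $F_0$ and list the inner faces in any parent-before-child order, reading off the ears from the tree edges; this matches the way Lemma~\ref{lem:ear} is used in Lemma~\ref{lem:5cycles} and Theorem~\ref{thm:otherthan5cycle}.
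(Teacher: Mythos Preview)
Your proof is correct and follows the standard weak-dual-tree argument for outerplanar graphs; the induction is clean and the bookkeeping you flag (that $B'$ remains $2$-connected, and that the edge condition transfers) is indeed routine once one notes that $B'$ still has a Hamiltonian outer cycle. Note, however, that the paper does not supply its own proof of this lemma at all: it simply cites \cite{Aubry2016}, where the statement appears as Observation~2, so there is no in-paper argument to compare against. Your approach is precisely the natural one and matches what one finds in the literature for this fact.
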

We are now ready to prove Lemma \ref{lem:5cycles}.
\begin{proof}(Proof of Lemma \ref{lem:5cycles})
Since $G$ is 2-connected, the entire graph forms a single block. 
Let $F_0, P_1,\dots, P_q$ be an ear decomposition of $G$. 
Recall that all the faces have exactly five vertices.
Let $F_0 = v_1-v_2-v_3-v_4-v_5-v_1$.
We assign\footnote{The coloring assigned in this proof
does not satisfy the condition marked $\star$. However,
this is not an issue since we are coloring the whole of
$G$ in this lemma.}
the following colors to the vertices in $F_0$: $C(v_1) = 1, C(v_2) = 1, C(v_3) = 2, C(v_4) = 2, C(v_5) = 3$. 
We also have $U(v_1) = 3, U(v_2) = 2, U(v_3) = 1, U(v_4) = 3, U(v_5) = 1$.

Let $P_i$ be any subsequent face $P_i = w_1-w_2-w_3-w_4-w_5-w_1$ with 
$\{w_1, w_2\}$ being the pre-existing edge in $F_0 \cup \bigcup_{1 \leq j<i} P_j$. Depending on the values already assigned to 
$C(w_1), U(w_1), C(w_2), U(w_2)$, we assign the colors to $w_3, w_4$ and $w_5$. 
We always ensure that $C(v) \neq U(v)$ for all vertices $v$. 
We note that the values $C(w_1), U(w_1), C(w_2), U(w_2)$ can take only 
the four below combinations, w.l.o.g. 
We explain the coloring for the rest of $P_i$ in each of these cases.
\begin{enumerate}
    \item $C(w_1) = C(w_2)$ and $|\{C(w_1), U(w_1), U(w_2)\}| = 3$. W.l.o.g., let $C(w_1) = 1, U(w_1) = 2, C(w_2) = 1, U(w_2) = 3$. Assign $C(w_3) = 2, C(w_4) = 2, C(w_5) = 3$ and $U(w_3) = 1, U(w_4) = 3, U(w_5) = 1$.
    \item $C(w_1) \neq C(w_2)$, $U(w_1) \neq U(w_2)$, and $|\{C(w_1), C(w_2), U(w_1), U(w_2)\}| = 3$. Either $w_1$ serves as the uniquely colored neighbor of $w_2$ or vice versa. W.l.o.g.,
    let $C(w_1) = 1, U(w_1) = 2, C(w_2) = 2, U(w_2) = 3$. Assign $C(w_3) = 1, C(w_4) = 3, C(w_5) = 3$ and $U(w_3) = 2, U(w_4) = 1, U(w_5) = 1$.
    \item $C(w_1) = U(w_2)$ and $C(w_2) = U(w_1)$.
    W.l.o.g.,
    let $C(w_1) = 1, U(w_1) = 2, C(w_2) = 2, U(w_2) = 1$.
    Assign $C(w_3) = 2, C(w_4) = 3, C(w_5) = 1$ and $U(w_3) = 3, U(w_4) = 2, U(w_5) = 3$.
    \item $C(w_1) = C(w_2)$ and $U(w_1) = U(w_2)$.
    W.l.o.g.,
    let $C(w_1) = C(w_2) = 1, U(w_1) = U(w_2) = 2$.
    Assign $C(w_3) = 1, C(w_4) = 2, C(w_5) = 3$ and $U(w_3) = 2, U(w_4) = 3, U(w_5) = 1$.
    \item The case $U(w_1) = U(w_2)$ and $|\{U(w_1), C(w_1), C(w_2)\}| = 3$ does not arise in the above colorings. 
\end{enumerate}
\qed
\end{proof}

At this point, to complete the proof of Theorem \ref{thm:4col}
we need to prove Theorem \ref{thm:otherthan5cycle}. 
We now state 
a few results that would help us towards this end.

\begin{lemma}\label{lem:facecol}
An uncolored face $F$, such that $|V(F)| \neq 5$, can be \cf{} colored using 4 colors satisfying the invariants.
\end{lemma}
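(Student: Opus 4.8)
The plan is to assign colors to the vertices of the face $F$ one at a time, going around the cycle, while maintaining the three invariants of $C$ — in particular the $(\star)$ condition that for every edge $\{v,w\}$ we have $|\{C(v),U(v),C(w),U(w)\}| = 3$. Since this lemma concerns an \emph{uncolored} face, there is no precolored vertex to worry about, so I have complete freedom in choosing the initial colors; the genuine content is that $|V(F)| \neq 5$. Write $F = v_1 - v_2 - \cdots - v_m - v_1$ with $m = |V(F)| \geq 3$ and $m \neq 5$. First I would handle the small cases $m = 3$ and $m = 4$ by hand: for a triangle, set $C(v_1)=1, C(v_2)=2, C(v_3)=3$ and $U(v_i)$ to be the color of the next vertex around; for a 4-cycle, set $C(v_1)=1, C(v_2)=2, C(v_3)=1, C(v_4)=3$ with unique colors read off from an appropriate neighbor, checking $(\star)$ on each of the four edges.

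For $m \geq 6$, the idea is to use a short periodic color pattern along the path $v_1, v_2, \ldots, v_m$ and then patch the ``seam'' at the edge $\{v_m, v_1\}$. Concretely, I would color most of the cycle with the period-3 block $1,1,2,2,3,3$ (the same pattern used in Lemma \ref{lem:5cycles}), assigning each vertex a uniquely colored neighbor among its cycle-neighbors, and then adjust the last few vertices depending on $m \bmod 6$ so that the pattern closes up consistently with the $(\star)$ condition across the wrap-around edge. Since $m \geq 6$ there is enough room to absorb any of the six residues by locally modifying $O(1)$ vertices near $v_m$ and $v_1$; the exclusion $m \neq 5$ is precisely what rules out the one residue class that cannot be closed (a 5-cycle forces the special coloring of Lemma \ref{lem:5cycles}, which violates $(\star)$). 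Throughout, each vertex's unique color is one of its two cycle-neighbors' colors, and the local structure of the pattern guarantees that this neighbor is genuinely uniquely colored within $N(v)$ restricted to the face; since $F$ is an uncolored face being colored in isolation, no outside neighbors interfere at this stage.

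I would verify the invariants edge by edge: $C(v) \neq U(v)$ holds since consecutive blocks use different colors; $C(v) \neq C(w)$ for adjacent $v,w$ holds because the only adjacent equal-color pairs in the raw pattern $1,1,2,2,3,3$ are within a block, and these get broken by the seam-patching (or one re-picks the pattern so that block boundaries align with cycle edges); and $|\{C(v),U(v),C(w),U(w)\}| = 3$ is checked from the explicit local values, exactly as in the four enumerated cases of Lemma \ref{lem:5cycles}. The main obstacle I anticipate is the bookkeeping around the seam: ensuring that for \emph{every} residue $m \bmod 6$ (with $m \neq 5$, $m \geq 3$) there is a consistent local adjustment near $v_1$ and $v_m$ that simultaneously preserves $C \neq U$, properness, and the $(\star)$ 3-element condition on both seam edges $\{v_{m-1},v_m\}$ and $\{v_m,v_1\}$. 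This is a finite case analysis — six residues, a constant-size patch for each — but it is the part that must be done carefully rather than waved through, and it is where the hypothesis $|V(F)| \neq 5$ is actually used.
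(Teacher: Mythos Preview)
Your high-level strategy---lay down a short periodic pattern around the cycle and patch the seam at $v_m,v_1$ by a residue-class case analysis---is exactly what the paper does. But your concrete choice of pattern, the period-$6$ block $1,1,2,2,3,3$, is a misstep: this pattern is \emph{not} a proper coloring, so every edge inside a block (e.g.\ $\{v_1,v_2\}$ with $C(v_1)=C(v_2)=1$) violates the invariant $C(v)\neq C(w)$. These violations occur throughout the cycle, not just at the seam, so ``seam-patching'' cannot repair them; and your fallback ``re-pick the pattern so that block boundaries align with cycle edges'' doesn't make sense, since any cyclic shift of $1,1,2,2,3,3$ still has adjacent equal colors. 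The pattern you borrowed comes from Lemma~\ref{lem:5cycles}, where the $(\star)$ invariant is explicitly \emph{not} maintained.

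The paper avoids this by using the period-$3$ pattern $1,2,3,1,2,3,\ldots$, which is already a proper coloring, and setting $U(v_i)=C(v_{i+1})$. This reduces the case analysis to $k\bmod 3$ (three residues, not six): for $k\equiv 0$ nothing changes; for $k\equiv 1$ one reassigns $C(v_k)=4$; for $k\equiv 2$ one rewrites the last four vertices as $4,2,3,4$. The hypothesis $|V(F)|\neq 5$ enters precisely in the $k\equiv 2$ case, where the four-vertex patch requires $k\geq 8$. So your plan is sound in outline, but you should replace the period-$6$ block by the period-$3$ proper coloring; this both fixes the invariant problem and halves the case work.
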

\begin{proof}
Let $F=v_1-v_2-v_3-\dots-v_{k-1}-v_k-v_1$ be a  face with $|V(F)|=k$, $k \neq 5$. 
We assign $C(v_1)=1, C(v_2) = 2, C(v_3) = 3$ and for the remaining vertices (if any), we set $C(v_i)=C(v_{i-3})$. In order
to satisfy the invariants, we need to make the following changes:
\begin{itemize}
    \item $k \equiv 0 \pmod 3$. 
    No change is necessary. 
    \item $k \equiv 1 \pmod 3$. Reassign
     $C(v_k) = 4$.
    \item $k \equiv 2 \pmod 3$. 
     Reassign
     $C(v_{k-3}) = 4, C(v_{k-2}) = 2, C(v_{k-1}) = 3, C(v_k) = 4$.
    Notice that this coloring does not satisfy the invariants if $k = 5$. However, the smallest $k$ that we consider in this case is $k = 8$.
\end{itemize}

In each of the above cases the unique color for each vertex $v_i$ is provided by its cyclical successor i.e., $U(v_i) = C(v_{i+1})$.
\qed
\end{proof}

\begin{lemma}~\label{lem:onevertexcolored}
Let $F$ be a face (cycle) in $G$ with one vertex $v$ such that $C(v)$ and $U(v)$ are already assigned, with $C(v) \neq U(v)$.
Then the rest of $F$ can be  \cf{} colored using at most 4 colors, while retaining $C(v)$ and $U(v)$, and 
satisfying the invariants.
\end{lemma}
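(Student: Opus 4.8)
The plan is to give a direct construction that walks around the cycle $F = v - u_2 - u_3 - \cdots - u_k - v$ (with $k = |V(F)| \ge 3$) starting from the already-colored vertex $v$. By permuting the names of the colors we may assume $C(v) = 1$ and $U(v) = 2$, so that colors $3$ and $4$ are still free. First I would record what the invariants force near $v$: in order to retain the unique color $U(v) = 2$ we must avoid color $2$ on the two new neighbors $u_2$ and $u_k$ of $v$, and properness rules out color $1$ there, so $C(u_2), C(u_k) \in \{3,4\}$; furthermore the invariant $(\star)$ on the edges $\{v,u_2\}$ and $\{v,u_k\}$ forces $U(u_2), U(u_k) \in \{1,2\}$. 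Since at this point $u_2$'s only neighbors are $v$ and $u_3$ (and similarly $u_k$'s are $u_{k-1}$ and $v$), exactly one of $v$ or the adjacent interior vertex will end up being its uniquely colored neighbor, and the choice is dictated by the colors we put on $u_3$ and $u_{k-1}$.

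Next I would color the interior path $u_3, u_4, \ldots, u_{k-1}$ in the spirit of Lemma \ref{lem:facecol}: push a suitable periodic color pattern drawn from $\{1,2,3,4\}$ along the path so that every $u_i$ with $3 \le i \le k-1$ receives its uniquely colored neighbor from its successor $u_{i+1}$ (that is, $U(u_i) = C(u_{i+1})$), while the condition $(\star)$ is maintained on each edge $\{u_{i-1},u_i\}$; the residue of $k$ modulo $3$ governs how many colors are used and where a single occurrence of color $4$ (or a short local swap of colors $1$ and $2$ near $u_{k-1}$) has to be inserted, exactly as in the three cases of Lemma \ref{lem:facecol}. The delicate step is closing the cycle at $u_k$: one has to choose $C(u_k) \in \{3,4\}$ with $C(u_k) \ne C(u_{k-1})$ and a value $U(u_k) \in \{1,2\}$ such that $(\star)$ holds simultaneously on $\{u_{k-1},u_k\}$ and on $\{u_k,v\}$, such that $u_k$ really has a uniquely colored neighbor (typically $v$, since $C(v) = 1$ is automatically unique in $N(u_k) = \{u_{k-1},v\}$ once $C(u_{k-1}) \ne 1$), and such that $v$'s retained color $2$ remains uncompromised. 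When the generic pattern fails to close up cleanly I would locally recolor a bounded number of the last vertices $u_{k-1}, u_{k-2}, \ldots$; and I would treat the short cycles $k = 3,4,5$ by hand with explicit color/unique-color tables (for $k = 5$ the pattern of Lemma \ref{lem:facecol} cannot be used at all, and an ad hoc choice such as $C(v,u_2,u_3,u_4,u_5) = (1,3,2,4,3)$ with $U(v,u_2,u_3,u_4,u_5) = (2,2,4,3,1)$ does the job).

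I expect the end-of-cycle bookkeeping to be the main obstacle. The invariant $(\star)$ is extremely rigid --- it pins down $|\{C(x),U(x),C(y),U(y)\}| = 3$ on every edge --- so there is essentially no slack, and for each residue class of $k$ modulo $3$ one must check that the colors forced at $v$ are compatible with the periodic assignment once it has wrapped all the way around. The recurring pitfall is producing a vertex both of whose two cycle-neighbors get the same color, which would destroy that vertex's uniquely colored neighbor; excluding this is precisely what constrains the pattern and forces the separate treatment of small $k$. Everything else --- properness, $C \ne U$, and the existence of uniquely colored neighbors for the interior vertices --- falls out immediately from the ``take the successor'' convention.
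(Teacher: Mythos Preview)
Your proposal is correct and follows essentially the same approach as the paper: normalize to $C(v)=1$, $U(v)=2$, push a period-$3$ pattern $3,2,1,3,2,1,\ldots$ around the cycle starting at $v$'s neighbor, use the successor rule $U(v_i)=C(v_{i+1})$, and patch the last one or two vertices according to $k \bmod 3$ so that the cycle closes without putting color $2$ next to $v$. The only cosmetic difference is that the paper treats all $k\ge 4$ (including $k=5$) uniformly with a single reassignment rule rather than handling $k=4,5$ separately by hand --- your explicit $k=5$ table $(1,3,2,4,3)/(2,2,4,3,1)$ is in fact exactly what the paper's general rule produces --- so you can streamline your write-up by dropping the special small-$k$ cases beyond $k=3$.
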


\begin{proof}
Let $v_1$ be the colored vertex in 
the cycle $F$. 
We may assume w.l.o.g. that 
$C(v_1)=1$ and $U(v_1)=2$. Now, we extend $C$ to the remainder of $F$. 
\begin{itemize}
    \item $|V(F)|=3$ with $F=v_1-v_2-v_3-v_1$. 

    We assign: $C(v_2)=3$, $C(v_3)=4$ and $U(v_2)=1$, $U(v_3)=1$.
    
    
    
    \item $|V(F)|\geq 4$ with $F=v_1-v_2-v_3-\dots-v_{k-1}-v_k-v_1$. 
    
    We first assign: $C(v_2)=3$ and $C(v_{3})=2$. 
    For the remaining vertices $v_i$, we set $C(v_i) = C(v_{i-3})$ 
    for $4 \leq i \leq k$. However, we need to make some changes to this in order 
    to satisfy the invariants. We have the following subcases:
    \begin{itemize}
        \item $k \equiv 0 \mbox{ or } 1 \pmod 3$. 
        Reassign $C(v_k)=4$.
        \item $k \equiv 2 \pmod 3$. 
        Reassign $C(v_{k-1})=4$.
    \end{itemize}
    In each of the above cases the unique color for each vertex $v_i$ is provided by its cyclical successor i.e., $U(v_i) = C(v_{i+1})$. Observe that $U(v_1)$ is left unchanged, by ensuring $v_2$ and $v_k$, the neighbors of
    $v_1$, are not assigned the color $U(v_1)$.
\end{itemize}
\qed
\end{proof}

\begin{lemma}\label{lem:CuCvsame}
Let $F$ be a face with $|V(F)|\geq 4$ with such that the edge $\{v_1, v_2\} \in E(F)$ 
and $v_1$ and $v_2$ already colored such that 
$C(v_1)=C(v_2)$ and $U(v_1)\neq U(v_2)$.
Then the rest of $F$ can be \cf{} colored using 4 colors satisfying the invariants. 
\end{lemma}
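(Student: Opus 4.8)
The plan is to complete the coloring of the cycle $F = v_1 - v_2 - v_3 - \cdots - v_k - v_1$ by coloring the new vertices $v_3, v_4, \ldots, v_k$ (each of which currently has degree $2$, with both neighbors inside $F$) along the path from $v_2$ to $v_1$, using a periodic pattern that has to mesh with the prescribed colors at the two ends. First I would normalize: relabeling the four colors, we may assume $C(v_1) = C(v_2) = 1$, $U(v_1) = 2$ and $U(v_2) = 3$ (legitimate since $U(v_1)$ and $U(v_2)$ are distinct and both differ from $C(v_1) = C(v_2) = 1$). Two boundary facts then pin down what is feasible: the uniquely colored neighbors of $v_1$ (color $2$) and of $v_2$ (color $3$) lie outside $\{v_3, \ldots, v_k\}$, and the only new neighbors of $v_1$ and $v_2$ created here are $v_k$ and $v_3$; hence we must have $C(v_k) \notin \{1, 2\}$ and $C(v_3) \notin \{1, 3\}$. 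Moreover, so that $v_3$ and $v_k$ obtain a uniquely colored neighbor inside $F$, we also need $C(v_4) \neq C(v_2) = 1$ and $C(v_{k-1}) \neq C(v_1) = 1$.

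The colors are then assigned by cases on $k \bmod 3$, always taking $U(v_i) = C(v_{i+1})$ for $3 \leq i \leq k-1$ and $U(v_k) = C(v_1) = 1$. If $k \equiv 1 \pmod 3$, I would color $v_3, \ldots, v_k$ with the period-$3$ pattern $2, 3, 1, 2, 3, 1, \ldots$, so that $C(v_3) = 2$ and $C(v_k) = 3$. If $k \equiv 2 \pmod 3$, I would use the same pattern $2, 3, 1, \ldots$ for $v_3, \ldots, v_{k-1}$ and then set $C(v_k) = 4$. The case $k \equiv 0 \pmod 3$ is the subtle one: the ``every three consecutive vertices are differently colored'' requirement forces a genuine period-$3$ coloring of the whole path, yet no three-color period-$3$ coloring can simultaneously satisfy $C(v_3) \notin \{1,3\}$ and $C(v_k) \notin \{1,2\}$; so here I would instead run color $4$ through the path, using the period-$3$ pattern $4, 3, 2, 4, 3, 2, \ldots$ on $v_3, \ldots, v_k$, so that $C(v_3) = C(v_k) = 4$.

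Finally I would check, in each case, that all invariants hold. Namely: $C(v_i) \neq U(v_i)$ and consecutive vertices along $v_2, v_3, \ldots, v_k, v_1$ get distinct colors; each $v_i$ with $3 \leq i \leq k$ has the neighbor named by $U$ as its unique color, which follows because any three consecutive vertices of the chosen pattern carry three distinct colors together with the boundary facts $C(v_4) \neq 1$ and $C(v_{k-1}) \neq 1$ (so that $v_k$ can use $v_1$); the starred invariant $|\{C(v_i), U(v_i), C(v_{i+1}), U(v_{i+1})\}| = 3$ holds for every edge of $F$ except $\{v_1, v_2\}$ --- on an interior edge $\{v_i, v_{i+1}\}$ of the periodic part this set is exactly $\{C(v_i), C(v_{i+1}), C(v_{i+2})\}$, three distinct colors, and the special edges $\{v_2, v_3\}$, $\{v_{k-1}, v_k\}$ and $\{v_k, v_1\}$ are verified directly; and $C(v_1), U(v_1), C(v_2), U(v_2)$ are untouched, with $C(v_3) \notin \{1, 3\}$ and $C(v_k) \notin \{1, 2\}$ ensuring the external uniquely colored neighbors of $v_1$ and $v_2$ remain unique. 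I expect the main obstacle to be the case $k \equiv 0 \pmod 3$: realizing that the three-color pattern is infeasible at the boundary and that color $4$ must be used throughout the path, and then carrying out the simultaneous bookkeeping of the starred condition and the uniqueness requirement at $v_3$, $v_k$, and (in the case $k \equiv 2$) the recolored last vertex.
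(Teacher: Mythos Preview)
Your proposal is correct and follows essentially the same approach as the paper: normalize the colors, then extend along the path $v_3,\ldots,v_k$ with a period-$3$ pattern, setting $U(v_i)=C(v_{i+1})$ (cyclically) and adjusting for $k \bmod 3$, finally checking the starred invariant on each new edge. The paper normalizes to $C(v_1)=C(v_2)=4$, treats $k=4,5$ as base cases, and for $k\ge 6$ uses the single pattern $3,2,4,3,2,4,\ldots$ with small end-patches depending on $k\bmod 3$; your version instead selects a different period-$3$ palette for each residue class (in particular routing colour $4$ through the whole path when $k\equiv 0$), which avoids the separate base cases but is otherwise the same idea.
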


\begin{proof}
W.l.o.g., we may assume $C(v_1)=C(v_2)=4$, $U(v_1)=1$ and $U(v_2)=2$. We have the following
cases:
\begin{itemize}
    \item $|V(F)|=4$ with $F=v_2-v_3-v_4-v_1-v_2$. 
    We assign: $C(v_3)=1$, $C(v_4)=3$ and $U(v_3)=4$ and $U(v_4)=4$.
     \item If $|V(F)|= 5$ with $F=v_2-v_3-v_4-v_5-v_1-v_2$.
     We assign: $C(v_3)=1$, $C(v_4)=2$, $C(v_5)=3$ and $U(v_3)=2$, $U(v_4)=3$ and $U(v_5)=4$.
    \item If $|V(F)|\geq 6$ with $F=v_2-v_3-\dots- v_{k-1} - v_{k}-v_1 -v_2$. 
    We assign: $C(v_3)=3$ and $C(v_4)=2$. For all $5\leq i \leq k$, $C(v_i)=C(v_{i-3})$.
    \begin{itemize}
        \item $k \equiv 0 \pmod 3$. Reassign $C(v_{k-1})=1$. 
        \item $k \equiv 1 \pmod 3$. No change is required.
         \item $k \equiv 2 \pmod 3$. Reassign $C(v_{k-1})=1$ and $C(v_k)=2$.
    \end{itemize}
    The unique color of each vertex $v_i$ is provided its cyclical successor i.e., $U(v_i) = C(v_{i+1})$.
\end{itemize}
\qed
\end{proof}

\begin{lemma}~\label{lem:path}
Let $P$ be a path in $G$ whose endpoints are $v_1, v_2$. Suppose
$\{v_1, v_2\} \in E(G)$ and that $v_1, v_2$ are already assigned 
the functions $C$ and $U$ satisfying the invariants.
Then the rest of $P$ can be \cf{} colored using at most 4 colors, while retaining $C$ and $U$ values of the endpoints, and satisfying the invariants.
\end{lemma}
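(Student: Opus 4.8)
The plan is to observe that $P$ together with the edge $\{v_1,v_2\}$ forms a cycle $F$ of $G$ whose length equals $|V(P)|$; write $P = v_1 - x_1 - x_2 - \cdots - x_m - v_2$. At the moment this ear is coloured, each internal vertex $x_i$ has exactly the two neighbours $x_{i-1}$ and $x_{i+1}$ (with $x_0 = v_1$ and $x_{m+1} = v_2$), so any proper colouring of $x_1, \ldots, x_m$ makes every $x_i$ conflict-free automatically: its two neighbours receive distinct colours, each appearing exactly once. Hence the task reduces to a proper $4$-colouring of $x_1, \ldots, x_m$ that (a) avoids $C(v_1)$ and $U(v_1)$ on $x_1$ and avoids $C(v_2)$ and $U(v_2)$ on $x_m$, so that the previously chosen unique neighbours of $v_1$ and $v_2$ are retained, and (b) keeps the invariant $(\star)$ on the newly created edges. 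I would set $U(x_i) := C(x_{i+1})$ for every internal vertex (its unique neighbour is the successor towards $v_2$), so that $(\star)$ on an internal edge $\{x_i, x_{i+1}\}$ becomes exactly the requirement that the three consecutive colours $C(x_{i-1}), C(x_i), C(x_{i+1})$ be pairwise distinct, while the two boundary edges $\{v_1, x_1\}$ and $\{x_m, v_2\}$ contribute only a couple of small extra constraints involving $U(v_1)$ and $U(v_2)$.

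Next I would exploit that $(\star)$ already holds on the edge $\{v_1, v_2\}$: this forces $C(v_1) \neq C(v_2)$ and $|\{C(v_1), U(v_1), C(v_2), U(v_2)\}| = 3$, which, up to swapping $v_1 \leftrightarrow v_2$ and renaming colours, leaves exactly two configurations — (I) one endpoint's unique colour coincides with the other endpoint's colour, say $U(v_1) = C(v_2)$, for instance $C(v_1)=1, U(v_1)=2, C(v_2)=2, U(v_2)=3$; and (II) the endpoints share the same unique colour, $U(v_1) = U(v_2)$, for instance $C(v_1)=1, U(v_1)=3, C(v_2)=2, U(v_2)=3$. In each configuration I would lay down a period-$3$ pattern on the colours $\{1,2,3\}$ along $x_1, \ldots, x_m$, choosing the starting colour so that the two constraints at the $v_1$-end are met, and then split into subcases according to $m \bmod 3$; when the naive pattern fails to line up at the $v_2$-end I would repair it by recolouring the last one or two internal vertices with the fourth colour $4$ (placing $4$ near the $v_2$-end when $m \equiv 1 \pmod 3$, near the $v_1$-end when $m \equiv 2 \pmod 3$, and using only $\{1,2,3\}$ when $m \equiv 0 \pmod 3$). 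The small values $m = 1$ and $m = 2$ — triangles and $4$-cycles — I would dispatch separately by hand.

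The main obstacle will be the simultaneous end-matching: meeting the $v_1$-end constraints, the $v_2$-end constraints, and the "three consecutive colours distinct" condition all at once, for every residue of $m \bmod 3$ and both configurations, is delicate bookkeeping. Moreover, in a few genuinely unavoidable situations — most notably a triangle ear in configuration (II), where $x_1$ is forced to take colour $4$ — the invariant $(\star)$ cannot be maintained on the two new edges. For those I would flag the violation explicitly, verify that the colouring is nonetheless a valid conflict-free colouring (both endpoints keep their previous unique neighbour, and the single internal vertex sees two distinctly coloured neighbours), and note that such a $(\star)$-violating edge is covered by the exceptional-case provisions referred to in the footnote on the invariants, so that the outer induction used in the proof of Theorem~\ref{thm:4col} still goes through.
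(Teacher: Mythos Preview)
Your overall plan---split into the two configurations that the $(\star)$-invariant on the edge $\{v_1,v_2\}$ allows, lay down a period-$3$ pattern, and repair the ends with the fourth colour---is exactly what the paper does, and for $|V(P)| \geq 5$ your scheme goes through essentially verbatim. The gap is in how you deal with the unavoidable $(\star)$-violations in the short cases of configuration~(II). First, the $4$-cycle case ($m=2$) is genuinely obstructed too, not just the triangle: with $C(v_1)=1,\,U(v_1)=3,\,C(v_2)=2,\,U(v_2)=3$ and $P=v_1-x_1-x_2-v_2$, the constraints force $C(x_1)\in\{2,4\}$ and $C(x_2)\in\{1,4\}$, while the CFON condition at $x_1$ forbids $C(x_2)=1$ and at $x_2$ forbids $C(x_1)=2$; hence $C(x_1)=C(x_2)=4$, which already violates the $C(v)\neq C(w)$ part of $(\star)$ on the internal edge. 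So ``dispatch by hand'' cannot save $(\star)$ here.

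Second, and more importantly, you cannot simply appeal to the footnote and say the violation is ``covered by the exceptional-case provisions'': that footnote is a forward reference to precisely this lemma, and the handling has to be supplied here. In the paper the $(\star)$-violating edges are made harmless in one of two concrete ways: either one \emph{looks ahead} and colours the adjacent face(s) simultaneously with the ear (this is what happens in the paper's Cases~2(i)(c), 2(i)(d), and 2(ii)(a), where vertices outside $P$ are coloured in the same step), or one arranges the violation to be of the special shape $C(u)=C(v)$ with $U(u)\neq U(v)$ and then invokes a separate auxiliary lemma (the paper's Lemma~\ref{lem:CuCvsame}) that shows any face of length $\geq 4$ attached to such an edge can still be coloured. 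Without one of these mechanisms the induction breaks: the next ear attached to a $(\star)$-violating edge is outside the hypotheses of the lemma you are trying to prove, so you cannot recurse. Your proposal needs to add this look-ahead / auxiliary-lemma machinery explicitly.
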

Since the proof of the above lemma is a bit long and involved, we first prove Theorem \ref{thm:otherthan5cycle} using Lemmas \ref{lem:facecol}, \ref{lem:onevertexcolored} and \ref{lem:path}.

\begin{proof}[Proof of Theorem \ref{thm:otherthan5cycle}]
\begin{enumerate}
    \item If the block is a bridge, say $\{v, w\}$, then we color it
    $C(v) = 1, C(w) = 2$ with $U(v) = 2, U(w) = 1$. Note that the 
    invariant marked $\star$ is violated in this case. However, this does
    not cause an issue since this edge is a bridge, and it does not appear 
    in any inner face.
    
    \vspace{0.05in}
    If the block is not a bridge, then by assumption, it contains a face $F$ 
    such that $|V(F)| \neq 5$. By Lemma \ref{lem:facecol}, we have a coloring
    of $F$ using 4 colors and satisfying the invariants. By the Lemma \ref{lem:ear} (Ear Decomposition),
    the block has an ear decomposition $F, P_1, P_2, \ldots$ with $F$ as the 
    starting inner face. Recall that for every path $P_i$, the end points form
    an edge in $F_0 \cup \bigcup_{1 \leq j<i} P_j$. 
    We color the paths $P_1, P_2, \ldots$ in this order. 
    By Lemma \ref{lem:path}, we have a coloring for each of these paths 
    using 4 colors and satisfying the invariants. 
    \item Let $v$ be the vertex in the block that is already colored. 
    W.l.o.g., we may assume that 
    $C(v) = 1$ and $U(v) = 2$.
    
    \vspace{0.05in}
    If the block is a bridge $\{v, w\}$,  we color $w$ with $C(w)=3$ and set $U(w) = 1$.
    
    \vspace{0.05in}
    If the block is not a bridge, choose an inner face $F$ that contains $v$.
    Using Lemma \ref{lem:onevertexcolored}, we color the remainder of $F$ using 
    at most 4 colors and satisfying the invariants. The rest of the proof 
    follows from the fact that we have an ear decomposition with $F$ as the 
    starting face, and Lemma \ref{lem:path}.
    This is very similar to the argument in the proof of part 1 of this theorem
    and hence the details are omitted.
\end{enumerate}
\qed
\end{proof}



\noindent In order to complete the proof of Theorem \ref{thm:4col}, the last remaining 
piece is the proof of Lemma \ref{lem:path}. 

\begin{proof}[Proof of Lemma~\ref{lem:path}] Let $v_1$ and $v_2$ be the end points of $P$. We extend the coloring $C$ to the remainder of $P$. 
According to the invariants of $C$, we have only 2 cases possible.

\noindent\textbf{Case 1:} $C(v_1) \neq C(v_2), U(v_1) \neq U(v_2)$. W.l.o.g. we may assume  
$C(v_1)=1$, $C(v_2)=2$ and $U(v_1)=2$, $U(v_2)=3$.
\begin{itemize}
    \item  $|V(P)|=3$, $P=v_2-v_3-v_1$. Assign $C(v_3)=4$ with $U(v_3)=2$.
    \item  $|V(P)|=4$, $P=v_2-v_3-v_4-v_1$. Assign $C(v_3)=4$, $C(v_4)=3$ with $U(v_3)=3$, $U(v_4)=1$. 

    \item  $|V(P)|\geq5$, $P=v_2-v_3-\dots -v_{k-1}-v_k-v_1$. 
    We first assign $C(v_3)=1, C(v_4) = 3, C(v_5) =4$. 
    For the remaining vertices $v_i$, we initially assign $C(v_i) = C(v_{i-3})$ 
    for $6 \leq i \leq k$. However, we need to make some changes 
    to satisfy the invariants. We have the following subcases:
    \begin{itemize}
        \item $k \equiv 0 \pmod 3$. 
        Reassign $C(v_{k-1})=2$ and $C(v_k) = 4$
        \item $k \equiv 1 \pmod 3$. 
        Reassign $C(v_{k-1})=2$.
        \item $k \equiv 2 \pmod 3$. 
        No change is necessary.
    \end{itemize}
    In each of the above cases the unique color for each vertex $v_i$ is provided by its cyclical successor i.e., $U(v_i) = C(v_{i+1})$.
\end{itemize}

\noindent \textbf{Case 2:} $U(v_1) = U(v_2)$. W.l.o.g., we may assume $C(v_1)=1$, $C(v_2)=2$ and $U(v_1)= U(v_2)=3$. 

\begin{itemize}
    \item  \textbf{Case 2(i):} $|V(P)|=3$ and $P=v_2-v_3-v_1$.
    \begin{itemize}
        \item \textbf{Case 2(i)(a):} Vertices $v_1$ and $v_2$ are the only
        neighbors of $v_3$. Assign $C(v_3)=4$ and $U(v_3)=2$. The  
        invariant marked $\star$ is not satisfied, but that does not matter as 
        $v_3$ does not participate in any further faces.
        
        \item \textbf{Case 2(i)(b):} One of the edges $\{v_1, v_3\}$ or $\{v_2, v_3\}$ does
        not feature in an another face. W.l.o.g., say $\{v_2, v_3\}$ be that edge. 
        Assign $C(v_3) = 4$ with $U(v_3) = 1$. The $\star$ invariant is violated
        for $\{v_2, v_3\}$ here but it does not affect the further coloring.
        
        \item \textbf{Case 2(i)(c):} One of the edges $\{v_1, v_3\}$ or $\{v_2, v_3\}$ 
         features in an uncolored face $F$ such that $|V(F)| \neq 3$. 
         W.l.o.g., say $\{v_2, v_3\}$ is that edge.

         We assign $C(v_3) = 4$ with $U(v_3) = 1$. Let $|V(F)| = k$ with 
         $F=v_3-w_1-w_2-\dots-w_{k-2}-v_2-v_3$.
         We assign $C(w_1)=3$, $C(w_2)=1$ and $C(w_3)=4$ (if $w_3$ exists). 
         For all $4\leq i\leq k-2$, $C(w_i)=C(w_{i-3})$. 
         If $k \equiv 0 \pmod 3$, we reassign $C(w_{k-4})=2$, $C(w_{k-3}) = 1$ and $C(w_{k-2})=4$.
         
         The unique colors $U$ for the vertices
         are assigned as follows:
         \begin{itemize}
             \item For $k=6$, $U(w_1) = 4, U(w_2) = 3, U(w_3) = 2$ and
             $U(w_4) = 2$.
             \item For $k\neq 6$, we have 
             for $1\leq i \leq k-3$, 
         $U(w_i) = C(w_{i+1})$ and $U(w_{k-2})  = C(v_2) = 2$. 
         \end{itemize}


    
    
    

         
\begin{figure}[t!]
\centering
\begin{tikzpicture}[every node/.style={node distance=1.8cm,scale=0.9}, scale = 0.8]
\tikzstyle{vertex}=[circle,draw, minimum size=8pt]
\tikzstyle{edge} = [draw,thick,-,black]
\node[vertex]  (v3) at (0,-2) {$v_3$};
\node[vertex]  (v1) at (-1.2,2) {$v_1$};
\node[vertex]  (v2) at (1.2,2) {$v_2$};
\node[vertex]  (x) at (-3,0) {$x$};
\node[vertex]  (y) at (3,0) {$y$};
\node[vertex]  (z) at (-2.2,-2.5) {$z$};

\draw[edge, color=black] (x) -- (v1) -- (v2) -- (y);
\draw[edge, color=black] (x) -- (v3) -- (v1);
\draw[edge, color=black] (y) -- (v3) -- (v2);
\draw[edge, color=black] (x) -- (z) -- (v3);
\end{tikzpicture}
\caption{Case 2(i)(d)}\label{fig:2id}
\end{figure}
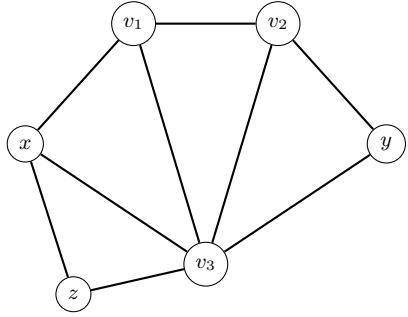

        \item \textbf{Case 2(i)(d):} The only remaining case is when both
         the edges $\{v_1, v_3\}$ or $\{v_2, v_3\}$ feature in uncolored triangular
         faces. Let $\{v_1, v_3\}$ form a triangular face with $x$ and 
         $\{v_2, v_3\}$ with $y$. We have two subcases:
        \begin{itemize}
            \item The edge $\{x, v_3\}$ forms a triangular face with another 
            vertex $z$ (see Figure \ref{fig:2id}).  
            Assign $C(v_3) = 1, C(x) = 2, C(y) = 4, C(z) = 3$ and $U(v_3) = 4, U(x) = 3, 
            U(y) = 2, U(z) = 1$. Some edges 
            violate the invariant marked $\star$, but these edges are already
            part of two faces, and hence do not feature in the further coloring.
            
            \item The edge $\{x, v_3\}$ is not part of a triangular face with another
            vertex. In this case, we assign $C(v_3) = 4, C(x) = 4, C(y) = 1$ and $U(v_3) = 2, U(x) = 1, U(y) = 2$. Out of the edges that violate 
            the invariant marked $\star$, the only one that can participate in the
            further coloring is the edge $\{x, v_3\}$. By assumption, $\{x, v_3\}$
            is not part of a triangular face. In Lemma \ref{lem:CuCvsame}, 
            we explain how to color the uncolored face that is $\{x, v_3\}$ may be 
            a part of.
        \end{itemize}
    \end{itemize}
    
    \item \textbf{Case 2(ii):} $|V(P)|=4$, $P=v_2-v_3-v_4-v_1$. 
    \begin{itemize}
        \item \textbf{Case 2(ii)(a):} The edge $\{v_3, v_4\}$ forms a triangular
        face with a vertex $x$.
        We assign $C(v_3)= 1, C(v_4) = 4, C(x) = 3$, with $U(v_3)= 3, U(v_4) = 1, U(x) = 4$.
        \item \textbf{Case 2(ii)(b):} The edge $\{v_3, v_4\}$ is not part of 
        an uncolored triangular face. We assign $C(v_3) = C(v_4) = 4$, with 
        $U(v_3)= 2, U(v_4) = 1$. If the edge  $\{v_3, v_4\}$ is part of an
        uncolored face $F$, by assumption, we know that $|V(F)| \geq 4$ and hence
        we can use Lemma \ref{lem:CuCvsame} to color $F$ satisfying the invariants.
    \end{itemize}
    \item \textbf{Case 2(iii):} $|V(P)|= 5$ with $P = v_2-v_3-v_4-v_5-v_1$. We
    assign $C(v_3)=1, C(v_4) = 3, C(v_5) =2$, with $U(v_3)=3, U(v_4) = 2, U(v_5) =1$.
    \item \textbf{Case 2(iv):} $|V(P)|\geq 6$, with $P=v_2-v_3-\dots-v_{k-2}-v_{k-1}-v_k-v_1$.

    We first assign $C(v_3)=4$ and $C(v_4)=3$. 
    For $5\leq i \leq k$, assign $C(v_i)=C(v_{i-3})$. 
    If $k \equiv 1 \pmod 3$, then reassign $C(v_{k-2})=1$ and $C(v_k)=2$. 
    For each vertex $v_i$, the unique color is provided by its cyclical successor
    i.e., $U(v_i) = C(v_{i+1})$.
\end{itemize} 
\qed
\end{proof}

\noindent\textbf{Algorithmic Note:} The steps in the proof of Theorem \ref{thm:4col}
leads to an algorithm. Block decomposition, outerplanarity testing and embedding
outerplanar graphs \cite{outerplanar} can all be done in linear time, i.e., $O(|V(G)|)$. Thus we have 
an $O(|V(G)|)$ time algorithm, that given an outerplanar graph $G$, determines a 
\cf{} coloring for $G$ that uses four colors.

\end{document}